\newif\ifdraft
\newcommand{\func}{\mathfrak{d}} 
\newcommand{\Den}{N} 
\newcommand{\Nat}{\mathbb{N}} 
\renewcommand\Pr{\mathop{\mathbf{Pr}}}
\DeclareMathOperator{\E}{\mathbf{E}}
\newcommand{\Var}{\mathop{\mathbf{Var}}}
\def\defeq{\mathrel{\mathop:}=}
\newcommand{\thit}{t_{\mathrm{hit}}}
\newcommand{\tmix}{t_{\mathrm{mix}}}
\newcommand{\tcov}{t_{\mathrm{cov}}}
\newcommand{\order}{\mathrm{o}}
\newcommand{\Order}{\mathrm{O}}
\def\inp<#1>{\langle #1 \rangle}
\def\Inp<#1>{\left\langle #1 \right\rangle}
\def\nor<#1>{\| #1 \|}
\def\Nor<#1>{\left\| #1 \right\|}
\def\bra<#1>{ \llbracket #1 \rrbracket }
\def\Bra<#1>{\left \llbracket #1 \right \rrbracket}
\newtheorem{theorem}{Theorem}[section]
\newtheorem{lemma}[theorem]{Lemma}
\newtheorem{corollary}[theorem]{Corollary}
\newtheorem{proposition}[theorem]{Proposition}
\crefname{equation}{}{}
\crefname{assumption}{Assumption}{Assumption}
\crefname{figure}{Figure}{Figure}
\title{How Many Vertices Does a Random Walk Miss in a
Network\\with Moderately Increasing the Number of Vertices?}
\author{
Shuji Kijima\thanks{Department of Informatics, 
   Graduate School of Information Science and Electrical Engineering, 
   Kyushu University, Fukuoka  819-0395, Japan; e-mail: {\ttfamily kijima@inf.kyushu-u.ac.jp}} \and
Nobutaka Shimizu\thanks{
Department of Mathematical Informatics, 
The University of Tokyo, 
Tokyo 113-8656, Japan; e-mail: {\ttfamily nobutaka\_shimizu@mist.i.u-tokyo.ac.jp}} \and 
Takeharu Shiraga\thanks{
Department of Information and System Engineering, 
Chuo University, Tokyo 112-8551, 
Japan; e-mail: {\ttfamily shiraga.076@g.chuo-u.ac.jp}}}
\date{\today}
\begin{document}
\maketitle 
\pagestyle{plain}
\baselineskip 17pt plus 1pt minus 1pt 


\begin{abstract}
 Real networks are often dynamic. 
 In response to it, 
  analyses of algorithms on {\em dynamic networks} 
  attract more and more attentions in network science and engineering. 
 Random walks on dynamic graphs also have been investigated actively in more than a decade, 
   where in most cases the edge set changes but the vertex set is static. 
 The vertex sets are also dynamic in many real networks. 
 Motivated by a new technology of the analysis of random walks on dynamic graphs, 
   this paper introduces a simple model of graphs with increasing the number of vertices, and 
   presents an analysis of random walks 
     associated with the cover time 
   on such graphs.  
 In particular, we reveal that 
    a random walk asymptotically covers the vertices all but a constant number  
     if the vertex set grows {\em moderately}. 

\ 

\noindent
{\bf Keywords: }
 Cover time, dynamic graph, evolving graph, temporal graph. 
\end{abstract}

\section{Introduction}
 Networks appearing in the real world, 
    such as the Internet, 
     transportation networks, 
     sensor/wireless networks, 
     social networks and chemical dynamics, 
   change their shapes time by time. 
 Nevertheless, 
   what is known about the analyses of algorithms on dynamic networks is quite limited, 
   comparing with a wealth of knowledge on computations in static networks. 
 In response to it,  
   theoretical analyses of models and algorithms on dynamic networks 
     recently attract high attentions, 
     particularly in the context of  network science and engineering, 
    concerning 
  such as 
     connectivity, 
     exploration, %
      information spreading, gathering, agreement, 
      sampling, 
      population protocol, 
      random walks and other stochastic processes,  
        see e.g.,~\cite{MS18,Michail16,Augustine16,ClementiDC15,Sarma15,KO11,Cooper11}.

 Random walk on a graph is a fundamental stochastic process: 
  a walker on a vertex moves to a randomly picked neighbor at each discrete time step. 
 Random walk is a simple and powerful tool in the wide range of computer science, 
   such as randomized search, page rank and MCMC, 
   and so is it in networking science and engineering~\cite{Cooper11,Sarma15,AKL18,SZ19}. 
 The cover time of a random walk is 
   the time it takes for a walker to visit all vertices of the graph. 
 The cover time is one of the fundamental quantities of a random walk, 
   see e.g., \cite{Aleliunas79,Aldous83,Matthews88,Feige95up,Feige95low,DS84,AF02,LP17}, and 
    it is important with applications such as randomized search. 
   Analyses of {\em random walks on dynamic graphs}
    have been actively developed in the context, 
    where the  cover time is a central issue  
    \cite{Cooper11,CF03,AKL08,AKL18,DR14,YM16,LMS18,SZ19} 
  (see \cref{sec:related-work} for more detail).

 Those existing works, except for Cooper and Frieze~\cite{CF03}, 
   about random walks on dynamic networks 
    are concerned only with networks over a static vertex set. 
 However,  the real networks 
    change their vertex sets time by time. 
 Motivated by a new analysis technique, 
    this paper investigates random walks on graphs with increasing the number of vertices. 
 A dynamic vertex set causes some technical troubles:  
  it is questionable if 
    the ``cover time,'' that is a natural quantity for a static vertex set, 
    is also appropriate for a dynamic vertex set, and also 
  it is hopeless, as Cooper and Frieze~\cite{CF03} revealed, 
    to cover vertices beyond a constant ratio 
    when the number of vertices constantly increases. 
 In view of this, 
  we introduce a simple model of {\em growing graphs}, and 
  presents an analysis of the number of vertices remaining {\em unvisited} by a random walk 
   as a counterpart to the cover time of a random walk on a static vertex set.

\subsection{Model and quantities}\label{sec:model}
\paragraph{Example: collection of coupons with increasing the number of types.}
 To introduce our model, 
  let us start with a simple and intuitive example. 
 Suppose 
  you draw a coupon randomly from a finite number of types of coupons every day. 
  A single type of coupon exists on the first day, and
a new type of coupon is released at intervals of $n$ days
for the number $n$ of existing types of coupons,
i.e.,
you draw from two types of coupons for the second and the third days,
draw from three for the fourth to the sixth days, and
draw from $n$ for the $\binom{n}{2}+1$st to the $\binom{n+1}{2}$-th days.
 It might be difficult to {\em complete} all types of coupons 
   because new types are sequentially released. 
 Then, how many types of coupons do you expect to collect? 
 We will prove that 
   you can expect to {\em miss} at most two types of coupons. 
 On the other hand, interestingly, 
   the number of uncollected types of coupons diverges to infinity as the days go by 
   if the release intervals are $\order(n)$, e.g., $\lceil \sqrt{n} \rceil$ days 
    (see \cref{thm:complete_graph}). 

 Coupon collector's problem is often 
   connected to the cover time of a random walk on a complete graph. 
 Generalizing the above example, 
   we  investigate a random walk on a network with moderately increasing the number of vertices. 
 In the network model, 
   we introduce a parameter corresponding to the growth rate of the vertex set, 
    which will be represented by duration, in fact.  
 Then, we will be concerned with the number of unvisited vertices, 
    instead of the cover time. 
 
\paragraph{Random walk on a growing graph.}
A \emph{growing graph} is a sequence of graphs ${\cal G} = {\cal G}_0,{\cal G}_1,{\cal G}_2,\ldots$
  where each ${\cal G}_t= ({\cal V}_t,{\cal E}_t)$ is a connected simple undirected graph 
    such that ${\cal V}_t \subseteq {\cal V}_{t+1}$. 
  A {\em random walk} on a growing graph is 
    a stochastic process $Z=Z_0,Z_1,Z_2,\ldots$  ($Z_t \in {\cal V}_t$), where 
 the transition probability from $Z_t$ to $Z_{t+1}$ is 
   provided as a random walk on ${\cal G}_t$. 
 We remark that $Z_t \in {\cal V}_{t-1}$ holds for $t=1,2,\ldots$, in fact.  

 This paper is particularly concerned with 
     a simple model of growing graphs with moderate changes.  
 Roughly speaking, 
    a growing graph ${\cal G}$ in this paper 
     keeps being a graph\footnote{
  For instance, 
    $G^{(n)}$ is 
     a complete graph, 
     a path graph,  
     an expander graph, etc, of order $n$ respectively. 
  } $G^{(n)}$ unchanged for some duration of steps, then changes its shape to $G^{(n+1)}$ by adding a single vertex and connecting it to $G^{(n)}$. 
 Let $\func \colon \Nat\to\Nat$ be a function\footnote{E.g., $\func(n) = n$. },  
   denoting the {\em duration} of keeping the graph unchanged. 
 Then, 
  ${\cal G}$ is given as 
  ${\cal G}_t = G^{(n)}$ for $t$ satisfying $\sum_{i=1}^{n-1} \func(i)  \leq t < \sum_{i=1}^n \func(i)$ for $n=1,2,\ldots$, 
 where $G^{(n)}=(V^{(n)},E^{(n)})$ is a connected graph such that 
     $V^{(n)}=\{v_1,\ldots,v_n\}$ and 
     $E^{(n)} = E^{(n-1)} \cup \{\{ v_n, u \} : \mbox{ for some $u\in V^{(n-1)}$} \}$ for $v_n \in V^{(n)} \setminus V^{(n-1)}$.
 Notice that ${\cal G}_0$ is a graph of  a single vertex\footnote{
   This is just for convenience of descriptions, 
   but not essential in our later analyses. 
   See also \cref{sec:initial_round}. 
   }.
 In other words, 
  $\func(n)$ denotes the duration of $|{\cal V}_t| =n$, and hence  
  $\func(n) = \min \{t : |{\cal V}_t|=n+1 \} - \min\{t : |{\cal V}_t|=n \}$ holds.  
 For convenience, 
   let $T_n \defeq \sum_{i=1}^{n-1} \func(i) = \min \{t : |{\cal V}_t|=n \}$. 
 \cref{tbl:GtGn} shows the correspondence between ${\cal G}_t$ and $G^{(n)}$ in case of $\func(n)=n$. 

\begin{figure}[tbp]
\begin{center}
\begin{tabular}{|l|l|l|l|l|l|l|l|l|l|}
\hline
$Z_0$ & $Z_1$ & $Z_2$ & $Z_3$ & $Z_4$ & $Z_5$ & $Z_6$ & $Z_7$ & $Z_8$ & $\cdots$ \\ \hline
$\mathcal{G}_0$ & $\mathcal{G}_1$ & $\mathcal{G}_2$ & $\mathcal{G}_3$ & $\mathcal{G}_4$ & $\mathcal{G}_5$ & $\mathcal{G}_6$ & $\mathcal{G}_7$ & $\mathcal{G}_8$ & $\cdots$ \\ \hline
$G^{(1)}$        & $G^{(2)}$        & $G^{(2)}$        & $G^{(3)}$        & $G^{(3)}$        & $G^{(3)}$        & $G^{(4)}$        & $G^{(4)}$        & $G^{(4)}$        & $\cdots$ \\ \hline
\end{tabular}
\caption{Correspondence between $\mathcal{G}_t$ and $G^{(n)}$ when $\func(n)=n$. 
The transition from $Z_t$ to $Z_{t+1}$ is performed on ${\cal G}_t$, and hence $Z_{t+1} \in {\cal V}_t$ holds in fact.
In this example, $T_1=0$, $T_2=1$, $T_3=3$ and $T_4=6$.
\label{tbl:GtGn}}
\end{center}
\end{figure}

 This paper is also concerned with a particular model of random walks on growing graphs. 
 For simplicity, we assume that 
     a random walk on a growing graph ${\cal G}$ 
    is temporarily time-homogeneous, 
   meaning that   
   a random walk is formally represented by a common $n \times n$ transition matrix $P^{(n)}$ 
  such that $\Pr[Z_{t+1} = v \mid Z_{t} = u] = (P^{(n)})_{u,v}$ when ${\cal G}_t = G^{(n)}$. 
 We simply represent a random walk on a growing graph ({\em RWoGG}, for short) 
  by a triple $R=(\func,(G^{(n)})_{n=1}^{\infty}, (P^{(n)})_{n=1}^{\infty})$. 

 Then, we are concerned with the number of vertices unvisited by a RWoGG, 
   formally given by 
\begin{align*}
  {\cal U}_t 
   \defeq  \left| \left\{v \in {\cal V}_{t-1} : v \neq Z_s \mbox{ for any $s \in \{0,1,\ldots,t\}$} \right\}\right|
\end{align*}
 where 
 recall the fact that $Z_t \in {\cal V}_{t-1}$. 
 Particularly, 
  let $U(n)$ (or simply $U$ without confusion) 
   denote ${\cal U}_{T_{n+1}}$, 
 i.e., 
 $U(n) = n- \left|\bigcup_{t=0}^{T_{n+1}} \{Z_t \} \right|$, and 
  we will be concerned with it.  
 Remark that 
   ${\cal U}_t$ is monotone nonincreasing for $t \in (T_n,T_{n+1}]$, and 
   $U(n-1)+1 \geq {\cal U}_t \geq U(n)$ hold for the same time period.  

\paragraph{Terminology on time-homogeneous Markov chains.}
 We here briefly introduce other terminology 
    for random walks on static graphs, or time-homogeneous Markov chains, cf.~\cite{LP17}. 
 Suppose that $X_0,X_1,X_2,\ldots$  is a random walk on a static graph $G=(V,E)$ 
   characterized by a time-homogeneous transition matrix $P = (P_{u,v})\in [0,1]^{V\times V}$
    where $P_{u,v}=\Pr[X_{t+1}=v \mid X_t=u]$. 
 A transition matrix $P$ 
   is \emph{irreducible} 
     if $\forall u,v \in V$, $\exists t>0$, $(P^t)_{u,v} > 0$, and 
  is \emph{apperiodic} 
    if $\forall v \in V$, ${\rm GCD}\{t>0 : (P^t)_{v,v}>0\} = 1$. 
 An irreducible and apperiodic $P$ is said to be \emph{ergodic}. 
 A probabilistic distribution $\pi$ over $V$ is a \emph{stationary distribution} 
  if it satisfies $\pi P = \pi$.  
 It is well known that an ergodic $P$ has a unique stationary distribution~\cite{LP17}. 
 A random walk 
  is \emph{lazy} if $P_{v,v}\geq 1/2$ for all $v\in V$, 
  is \emph{reversible} if $\pi(u)P_{u,v}=\pi(v)P_{v,u}$ hold for all $u,v\in V$, and where $\pi\in[0,1]^V$ is the stationary distribution, and  
  is \emph{symmetric} if $P_{u,v}=P_{v,u}$ holds for all $u,v\in V$.
  A {\em simple} random walk (resp. {\em simple lazy} random walk) 
    on an undirected graph is given by $P_{u,v} = 1/d_u$ for $\{u,v\} \in E$
    (resp. $P_{u,v} = 1/(2d_u)$ for $\{u,v\} \in E$ and $P_{u,u} = 1/2$)
    where $d_u$ is the degree of $u$. 
 The {\em hitting time} $\thit$ (also denoted by $\thit(P)$) is given by 
    $\thit \defeq 
      \max_{u,v\in V}\E[\min\{t \geq 0 : X_0=u \mbox { and } X_t=v\}]$. 
 The {\em cover time} $\tcov$ (or $\tcov(P)$) is given by  
    $\tcov \defeq \max_{u\in V}\E[\min\{t \geq 0 : [X_0=u] \mbox { and } 
      [\forall v \in V,\, \exists s \leq t,\, X_s=v] \}]$. 
 The {\em mixing time}\footnote{
    Mixing time is usually parametrized by $\epsilon$, but we call $\tmix=\tmix(P)$ mixing time in this paper~\cite{LP17}.
  } $\tmix$ is given by 
  $\tmix \defeq \min\{t>0 : (1/2)\max_{u \in V}\sum_{v\in V}|P^t(u,v)-\pi(v)|\leq 1/4\}$.

\subsection{Our results}\label{sec:our-results}
 This paper investigates the behavior of $\E[U]$ regarding $\func$ 
   for a RWoGG $R=(\func,(G^{(i)})_{i=1}^{\infty},(P^{(i)})_{i=1}^{\infty})$, 
  where recall that $U$ is an abbreviation of $U(n) = {\cal U}_{T_{n+1}}$
   denoting the number of vertices unvisited by the random walk 
   at the moment just before a new vertex $v_{n+1}$ is attached (see \cref{sec:model} for precise). 
 Our results are summarized as follows.  

 \paragraph*{Complete graph {\rm (\cref{sec:complete_graph})}.}
 As an introductory example of our analyses, 
  we firstly concerned with a random walk on a growing complete graph, 
  which corresponds to the example 
    of collecting coupons with new releases in \cref{sec:model}.  
 Let 
   $R_{\rm c}=(\func,(G^{(i)})_{i=1}^{\infty},(P^{(i)})_{i=1}^{\infty})$ 
    be a random walk on a growing {\em complete} graph,  
   where 
    $G^{(i)}$ is a complete graph of order $i$, and 
    $(P^{(i)})_{u,v}=1/i$ for any $u \in V^{(i)}$ and $v \in V^{(i)}$ (including $u=v$). 
\begin{theorem} \label{thm:complete_graph}
 For $R_{\rm c}=(\func,(G^{(i)})_{i=1}^{\infty},(P^{(i)})_{i=1}^{\infty})$, 
   the following holds: 
\begin{enumerate}[label=(\arabic*)]
    \item\label{item:complete_Omega(i)} 
    If there is a constant $C>0$ such that $\func(i)\geq Ci$ for all $i\in [n]$,
    then $\E[U]=\Order(1)$.
    \item\label{lab:complete_omega(i)}
    If $\func(i)/i\to\infty$ as $i\to\infty$,
    then $\E[U]\to 0$ as $n\to\infty$.
    \item\label{item:complete_o(i)} 
    If $\func$ is unbounded (i.e., $\func(i)\to\infty$ as $i\to\infty$) and 
      satisfies for all $i\in\Nat$ that 
       $\frac{\func(i)}{i}\geq \frac{\func(i+1)}{i+1}$ and 
       $\func(i)\leq \func(i+1)$,
    then $\E[U]=(1-\order(1))\frac{n}{\func(n)+1}$. 
    \item\label{item:complete_const}
    If $\func$ is constant (i.e., $\exists c\in\Nat$, $\forall i\in\Nat$, $\func(i)=c$), 
    then $\E[U]=(1-\Order(n^{-1}))\frac{n}{c+1}$.
\end{enumerate}
\end{theorem}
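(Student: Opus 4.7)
The starting point is the exact formula
\[
\E[U]=\sum_{k=2}^n p_k,\qquad p_k\defeq\prod_{i=k}^n\left(1-\frac{1}{i}\right)^{\func(i)},
\]
with $p_1=0$ since $Z_0=v_1$. This follows by linearity of expectation because every transition of $R_{\mathrm{c}}$ picks $Z_{s+1}$ uniformly from the current $V^{(i)}$; the $\func(i)$ samples drawn during phase $i$ are i.i.d., and the event ``$v_k$ is unvisited'' factorises over the phases $i\in\{k,\dots,n\}$ in which $v_k$ exists.

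Three of the four parts follow quickly from this. For \ref{item:complete_const} the product telescopes to $p_k=((k-1)/n)^c$ and a Faulhaber/Euler--Maclaurin estimate $\sum_{j=1}^{n-1}j^c=n^{c+1}/(c+1)-n^c/2+\Order(cn^{c-1})$ gives $\E[U]=n/(c+1)-1/2+\Order(1/n)=(1-\Order(n^{-1}))n/(c+1)$. For \ref{item:complete_Omega(i)} the elementary bound $(1-1/i)^i\le e^{-1}$ yields $(1-1/i)^{\func(i)}\le e^{-C}$ under $\func(i)\ge Ci$, hence $p_k\le e^{-C(n-k+1)}$ and a geometric sum gives $\E[U]\le 1/(e^C-1)=\Order(1)$. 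For \ref{lab:complete_omega(i)} the sharper $(1-1/i)^{\func(i)}\le e^{-\func(i)/i}$ combined with $\func(i)/i\to\infty$ shows that for any $M$, eventually every factor is at most $e^{-M}$; splitting the sum at the threshold index and sending first $n\to\infty$ and then $M\to\infty$ gives $\E[U]\to 0$.

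The main work is part \ref{item:complete_o(i)}. I would first apply summation by parts to $\ln p_k=\sum_{i=k}^n\func(i)\ln(1-1/i)$, using the telescope $\sum_{j=k}^i\ln(1-1/j)=\ln((k-1)/i)$ together with the non-decrease of $\func$, to produce the sandwich
\[
\left(\frac{k-1}{n}\right)^{\func(n)}\;\le\;p_k\;\le\;\left(\frac{k-1}{n}\right)^{\func(k)}.
\]
The lower bound is summed by the same Faulhaber estimate, giving $\E[U]\ge(1-\order(1))n/(\func(n)+1)$ as soon as $n/\func(n)\to\infty$. The hard part will be the matching upper bound, since $\func(k)$ can be substantially smaller than $\func(n)$ when $k$ is small. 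My plan is to split at $k=(1-\delta)n$ for a slowly vanishing $\delta=\delta(n)$. On $k>(1-\delta)n$, the hypothesis that $\func(i)/i$ is non-increasing forces $\func(k)\ge k\func(n)/n\ge(1-\delta)\func(n)$, and the piece reduces to a Faulhaber sum with exponent $(1-\delta)\func(n)$ contributing $(1+\Order(\delta))n/(\func(n)+1)$. On $k\le(1-\delta)n$, monotonicity of $p_k$ in $k$ together with $\func(i)\ge(1-\delta)\func(n)$ for $i>(1-\delta)n$ yields $p_k\le(1-\delta)^{(1-\delta)\func(n)}$, exponentially small in $\func(n)$. Choosing $\delta=2(\log\func(n))/\func(n)$, so that $\delta\to 0$ while $\delta\func(n)\to\infty$, makes the low-$k$ contribution $o(n/\func(n))$ and the high-$k$ contribution $(1+\order(1))n/(\func(n)+1)$, closing the gap.
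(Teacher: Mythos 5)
Your proof is correct, but it takes a genuinely different route from the paper's. You and the paper both start from the same exact formula $\E[U]=\sum_{k}\prod_{i=k}^n(1-1/i)^{\func(i)}$, and both treat \ref{item:complete_Omega(i)} and \ref{lab:complete_omega(i)} the same way (the $1+x\le e^x$ bound followed by a geometric sum, and a threshold-splitting argument respectively). The divergence is in parts \ref{item:complete_o(i)} and \ref{item:complete_const}. The paper proves a single Lemma (\cref{lem:Snbound}) whose parts are established by \emph{induction on $n$} via the recurrence
\[
S(n+1)=\Bigl(1-\tfrac{1}{n+1}\Bigr)^{\func(n+1)}\bigl(S(n)+1\bigr),
\]
and then reads off both the upper bound $S(n)\le n/\func(n)$ (using that $\func(i)/i$ is nonincreasing) and the lower bound $S(n)\ge\frac{n}{\func(n)+1}(1-1/n)^{\func(n)}$ (using that $\func$ is nondecreasing) in one line each per induction step. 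You instead obtain pointwise sandwich bounds $((k-1)/n)^{\func(n)}\le p_k\le((k-1)/n)^{\func(k)}$ from the telescoping product, use the exact formula $p_k=((k-1)/n)^c$ for constant $\func$, and close the upper bound for \ref{item:complete_o(i)} with a $k\gtrless(1-\delta)n$ split and a carefully chosen $\delta$. Your split-and-optimize argument for \ref{item:complete_o(i)} is noticeably more work than the paper's one-step induction, but the exact telescoping identity for \ref{item:complete_const} is cleaner than the paper's two inductions. One remark worth flagging: both your lower bound and the paper's give $(1-1/n)^{\func(n)}$ as the multiplicative error, which is $(1-\order(1))$ only when $\func(n)=\order(n)$; the paper acknowledges in the discussion after the theorem that \ref{item:complete_o(i)} targets $\func(i)=\order(i)$, and your ``as soon as $n/\func(n)\to\infty$'' caveat correctly identifies the same implicit restriction.
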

 Notice that 
  \ref{item:complete_Omega(i)} implies that 
     the number of missing types of coupons is at most a constant in expectation, 
     i.e., $\E[{\cal U}_t]=\Order(1)$ at any time $t$, if $\func(i)=\Omega(i)$, 
  while \ref{lab:complete_omega(i)} claims 
    a stronger upper bound     with a stronger assumption of $\func(i)=\omega(i)$ 
   that the expected number of missing types is asymptotic to 0 every time just before a new release 
       (recall the relation between $U$ and ${\cal U}_t$). 
 \ref{item:complete_o(i)} claims in case of $\func(i)=\order(i)$ and $\omega(1)$ that 
   $\E[U] \approx \frac{n}{\func(n)}$ up to the leading coefficient; 
  for instance,  
  $\E[U] \leq n^{\gamma}/C$ holds if $\func(i) \geq Ci^{1-\gamma}$ as well as  
  $\E[U] \geq n^{\gamma}/C$ holds if $\func(i) \leq Ci^{1-\gamma}$, 
 where $C>0$ and $\gamma\in [0,1]$ are arbitrary constants common in both equations (See also \cref{cor:simpleKn}).
 \ref{item:complete_const} is the counterpart of \ref{item:complete_o(i)} for constant $\func$. 
For example, if a new vertex appears every step ($\func(i)=1$), a random walk on a growing complete graph misses a half of the number vertices.

\paragraph*{Upper bound analysis {\rm (\cref{sec:general_graphs})}. }
 Next, 
  we focus on upper bounds of $\E[U]$ with respect to $\func$ 
   for RWoGG $(\func,(G^{(i)})_{i=1}^{\infty},(P^{(i)})_{i=1}^{\infty})$, 
   in general. 
 For convenience, 
  let
   $\thit(i)$, $\tcov(i)$ and $\tmix(i)$ 
  respectively denote the hitting, cover and mixing times of $P^{(i)}$, 
 in the rest of the paper. 

To begin with, we remark that it is easy
to prove that $\E[U]=\Order(1)$ if $\func(i)=\Omega(\thit(i) \log i)$
for any RWoGG
using the known fact
that the number of unvisited vertices exponentially decays every unit
time of ${\rm e}\thit$
(see e.g.~Sections 2.4.3 and 2.6 of \cite{AF02}; see also
\cref{lem:static_unvisited} in \cref{app:tools}).
 Thus, our interest is in the case that  $\func(i)=\order(\thit(i) \log i)$. 
 We establish the following upper bound of $\E[U]$, 
   claiming that  $\E[U]=\Order(1)$ if $\func(i) = C\thit(i)$ for $C>1$, in fact. 
 We remark that the following theorem 
    is an extension of 
    \cref{thm:complete_graph}~\ref{item:complete_Omega(i)} and \ref{lab:complete_omega(i)} 
    for ``a specific random walk on growing complete graphs''  
    to general random walks and graphs. 
 \begin{theorem}
\label{thm:simplified_main}
 Let $(\func,(G^{(i)})_{i=1}^{\infty},(P^{(i)})_{i=1}^{\infty})$ be an arbitrary RWoGG. 
\begin{enumerate}[label=(\arabic*)]
  \item \label{lab:general} 
  If there is a constant $C>1$ such that $\func(i)\geq C\thit(i)$ for all $i\in [n]$, 
  then $\E[U]=\Order(1)$.
  \item \label{lab:general_omega}
  If $\func(i)/\thit(i)\to \infty$ as $i\to \infty$, 
  then $\E[U]\to 0$ as $n\to \infty$.
\end{enumerate}
\end{theorem}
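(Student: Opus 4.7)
The plan is to expand $\E[U]=\sum_{j=1}^n p_j$, with $p_j\defeq \Pr[v_j \text{ is unvisited by time } T_{n+1}]$, and to control each $p_j$ by chaining a per-phase Markov estimate across the phases in which $v_j$ is present. Vertex $v_j$ first appears at time $T_j$, and during each subsequent phase $i\in\{j,j+1,\dots,n\}$ the walk performs $\func(i)$ time-homogeneous steps of $P^{(i)}$ on $G^{(i)}$ starting from $Z_{T_i}$. Conditioning on the $\sigma$-algebra $\mathcal{F}_{T_i}$ generated by the history up to time $T_i$, together with the event that $v_j$ is still unvisited at time $T_i$ (which in particular forces $Z_{T_i}\neq v_j$), the walk inside phase $i$ is a genuine Markov chain on $G^{(i)}$ from $Z_{T_i}$. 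Since $v_j\in V^{(i)}$ and $\E_{x}[\tau_{v_j}]\leq \thit(i)$ for every $x\in V^{(i)}$, Markov's inequality gives
\[
 \Pr[\,v_j \text{ is avoided in phase } i \mid \mathcal{F}_{T_i},\ v_j \text{ unvisited at } T_i\,] \leq \frac{\thit(i)}{\func(i)}.
\]
The tower rule over $i=j,\dots,n$ then yields $p_j\leq \prod_{i=j}^n \thit(i)/\func(i)$.

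For part \ref{lab:general}, each factor is at most $1/C$ by assumption, so $p_j\leq (1/C)^{n-j+1}$, and summing the geometric tail gives $\E[U]\leq 1/(C-1)=\Order(1)$. For part \ref{lab:general_omega}, fix an arbitrary $\varepsilon\in(0,1)$ and choose $n_0=n_0(\varepsilon)$ so that $\thit(i)/\func(i)\leq \varepsilon$ for all $i\geq n_0$; bounding the factors of the product trivially by $1$ when $i<n_0$ and by $\varepsilon$ when $i\geq n_0$ yields
\[
 \E[U]\leq (n_0-1)\,\varepsilon^{\,n-n_0+1}+\sum_{k=1}^{n-n_0+1}\varepsilon^{k}\leq (n_0-1)\,\varepsilon^{\,n-n_0+1}+\frac{\varepsilon}{1-\varepsilon}.
\]
Sending $n\to\infty$ with $\varepsilon$ and $n_0$ fixed kills the first term, and then taking $\varepsilon\to 0$ drives $\E[U]\to 0$.

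\textbf{Where the difficulty lies.} The calculation itself reduces to a geometric series, so the substantive work is in making the per-phase conditional estimate rigorous: one must condition on precisely the $\sigma$-algebra $\mathcal{F}_{T_i}$ at the phase boundary so that the chain inside phase $i$ is undisturbed, and exploit that the hitting-time bound $\E_x[\tau_{v_j}]\leq \thit(i)$ holds uniformly in the (random) starting state $x=Z_{T_i}$ — this uniformity is exactly what the worst-case definition of $\thit$ provides. The improvement from the trivial threshold $\thit(i)\log i$ down to $C\thit(i)$ for any constant $C>1$ comes entirely from this chaining: a single phase need only reduce the conditional survival probability by some constant factor strictly below $1$, and the geometric series in $1/C$ over the $n-j+1$ phases during which $v_j$ remains in the graph then absorbs the $\log i$ factor that a one-shot argument (applied inside the final phase alone) would otherwise require.
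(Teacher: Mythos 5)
Your proof is correct and follows essentially the same approach as the paper's: decompose $\E[U]$ as a sum over vertices, chain the per-round Markov property to obtain $p_j\leq\prod_{i=j}^n\thit(i)/\func(i)$ (this is precisely the content of the paper's Lemma~\ref{lem:unvisit_hit} combined with Markov's inequality), and then sum the geometric tail. Your handling of part~\ref{lab:general_omega} is marginally cleaner than the paper's: you bound the conditional avoidance probabilities in the early rounds $i<n_0$ trivially by $1$ (which is valid since they are probabilities, even when $\thit(i)/\func(i)>1$), whereas the paper introduces the auxiliary constant $K=\max_{i\in[i_0]}\thit(i)/\func(i)$ and carries $K^{i_0}$ through the estimate; the two give the same conclusion by the same limiting argument.
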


In \cref{thm:simplified_main}, we obtain a {\em general} upper bound of $\E[U]$ in the case of $\func(i)\geq (1+\epsilon)\thit(i)$, where $\epsilon>0$ is a constant.
In contrast, 
the case of $\func(i) \leq (1+\order(1))\thit(i)$ seems not easy: 
    it contains an issue of ``short random walks,'' 
   that is a challenging topic in the literature of the cover time of multiple random walks, and so on,  
    see e.g., \cite{KMS19}. 
 Henceforth, we focus on lazy and reversible random walks, 
    of which the transition matrices $P^{(i)}$ are known to be (essentially\footnote{
    The transition matrix  $P$ of a lazy and reversible random walk is not symmetric in general, 
    but there always exists a diagonal matrix $D$ such that$D^{-1}PD$ is symmetric~see e.g., \cite{LP17}. 
    }) positive semidefinite. 
  For ``rapidly'' mixing random walks such that $\tmix \ll \thit$, 
   we obtain the following upper bound. 
\begin{theorem} \label{thm:tmix<<thit}
 Let $(\func,(G^{(i)})_{i=1}^{\infty},(P^{(i)})_{i=1}^{\infty})$ be a RWoGG 
   such that $P^{(i)}$ is lazy and reversible. 
 Let $C>0$ and $\gamma\in [0,1]$ be arbitrary constants.
 If $\thit(i)/\tmix(i)\geq i^{\gamma}/C$ and 
  $\func(i)\geq \frac{3C\thit(i)}{i^\gamma}$
 for all $1<i\leq n$, 
  then $\E[U]\leq \frac{8n^\gamma}{C}+32$.
\end{theorem}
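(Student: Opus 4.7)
The plan is to split
\[
\E[U(n)] \;=\; \sum_{i=1}^{n}\Pr\bigl[v_i\text{ is unvisited by time }T_{n+1}\bigr]
\]
and, for each $i$, bound the probability that the new vertex $v_i$ introduced at time $T_i$ is never visited during the remaining phases. By ``phase $j$'' I mean the window $[T_j,T_{j+1})$, during which the walk evolves as a time-homogeneous lazy reversible chain with transition matrix $P^{(j)}$. My starting point is the classical hitting-time tail bound
\[
\Pr_u^{(j)}[\,T_v > t\,] \;\le\; 2\cdot 2^{-\lfloor t/(2\thit(j))\rfloor},
\]
valid for every $u,v\in V^{(j)}$ and derived by iterating Markov's inequality on $\E_u^{(j)}[T_v]\le \thit(j)$. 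Since this estimate is uniform in the starting vertex, the strong Markov property at each $T_j$ lets me multiply it across phases to obtain a product bound on the per-vertex miss probability.

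The naive product incurs a factor $2^{n-i+1}$ from the leading constants, which is catastrophic. To circumvent this I exploit that the hypotheses force $\func(j)\ge 3\tmix(j)$ (since $\tmix(j)\le C\thit(j)/j^{\gamma}\le \tfrac13\func(j)$). Each phase therefore admits an initial mixing segment of length $\tmix(j)$ after which the walk's distribution lies within total variation $1/4$ of $\pi^{(j)}$. From (near-)stationary I would invoke the sharper Aldous-type bound for lazy reversible chains,
\[
\Pr_{\pi}^{(j)}[\,T_v > t\,] \;\le\; \exp\!\left(-\frac{t}{e\,\thit(j)}\right),
\]
which composes multiplicatively with no leading constant. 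Applied to the exploration segment of length $\func(j)-\tmix(j)\ge 2C\thit(j)/j^{\gamma}$, together with a coupling that absorbs the total-variation discrepancy, this produces
\[
\Pr[v_i\text{ unvisited}] \;\le\; \exp\!\Bigl(-\tfrac{2C}{e}\sum_{j=i}^{n}j^{-\gamma}\Bigr) \;+\; (\text{coupling error}).
\]

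Finally I sum over $i$. For $\gamma\in[0,1)$, $\sum_{j=i}^{n}j^{-\gamma}\ge (n^{1-\gamma}-i^{1-\gamma})/(1-\gamma)$, and the elementary inequality $n^{1-\gamma}-(n-k)^{1-\gamma}\ge (1-\gamma)k/n^{\gamma}$ (for $0\le k\le n$) converts $\sum_{i=1}^{n}\exp(-c(n^{1-\gamma}-i^{1-\gamma})/(1-\gamma))$ into a geometric series of common ratio $\exp(-\Theta(C/n^{\gamma}))$, which evaluates to $O(n^{\gamma}/C)$. The case $\gamma=1$ is analogous via $\sum_{j=i}^{n}j^{-1}\ge\log(n/i)$ and an integral estimate. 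The additive $32$ in the theorem statement absorbs boundary terms from the last $O(1)$ phases where the exponential bound is trivial.

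I expect the principal obstacle to be the middle step: passing cleanly from ``near-stationary'' to a sharp exponential hitting-tail without paying a multiplicative $O(1)$ factor per phase, and controlling accumulation of total-variation errors. Because $\func(j)\ge 3\tmix(j)$ only guarantees a TV error $\le 1/4$ per phase, a naive union bound would contribute $n/4$ to $\E[U]$, swamping the target. I would resolve this either by a carefully designed coupling that needs to succeed only in the final $O(n^{\gamma})$ phases (those where the stationary exponential term is itself $\Omega(1)$), or by invoking the non-negative spectrum of the lazy reversible $P^{(j)}$ directly to obtain a uniform-in-start exponential tail $\Pr_u^{(j)}[T_v>t]\le \exp(-c\,t/\thit(j))$ that is valid once $t\gtrsim \tmix(j)$, thereby bypassing the coupling argument altogether.
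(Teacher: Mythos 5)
Your high-level plan --- decompose $\E[U]=\sum_k\Pr[v_k\text{ unvisited}]$, split each round into a mixing prefix of length $2\tmix(i)$ and an exploration suffix, and apply a stationary hitting tail on the suffix --- coincides with the paper's proof, which establishes the more general Theorem~3.3 with a free parameter $\Den$ (then takes $\Den=n^\gamma/C$) and uses Lemma~3.6 to formalize the mixing/exploration split. You also correctly diagnose the central obstruction: a TV distance of only $1/4$ after the prefix means each round contributes a factor of the form $\tfrac14\exp(-c\,\func(i)/\thit(i))+\tfrac34$, which never drops below $3/4$, so per-round coupling or union bounds are ruinous. The problem is that neither of your proposed escapes is sound. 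Alternative~(b) asserts a uniform-in-start tail $\Pr_u[\tau_v>t]\leq\exp(-ct/\thit)$ valid for $t\gtrsim\tmix$; this is not a valid lemma. A spectral estimate for lazy reversible chains yields $\Pr_u[\tau_v>t]\leq(1-1/\thit)^t/\sqrt{\pi(u)}$, and the prefactor $1/\sqrt{\pi(u)}$ does not wash out until $t=\Omega(\thit\log(1/\pi_{\min}))$, which may dwarf $\tmix$. Routing instead through the mixing-time decomposition removes the prefactor but reintroduces the $3/4$: the separation estimate gives only $\Pr_u[\tau_v>t]\leq\tfrac14\exp(-(t-2\tmix)/\thit)+\tfrac34\geq 3/4$ for every $u$, which for large $t$ contradicts your claimed $\exp(-ct/\thit)\to 0$. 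Alternative~(a) is too vague to assess, and when made precise it collapses back to exactly the product-of-factors sum you have not yet controlled.

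The specific missing ingredient is the binomial/Chernoff step in the paper's proof of Theorem~3.3. Each round contributes the factor $\tfrac14\exp(-1/\Den)+\tfrac34$, which is precisely $\E[\exp(-Y/\Den)]$ for $Y\sim\mathrm{Bernoulli}(1/4)$; hence the product over $L=n-k+1$ rounds equals $\E[\exp(-X/\Den)]$ for $X\sim\mathrm{Bin}(L,1/4)$. Splitting on the event $\{X\geq L/8\}$, whose complement has probability at most $e^{-L/32}$ by the Chernoff bound (Lemma~B.2), yields
\[
\left(\tfrac14\exp(-1/\Den)+\tfrac34\right)^L \;\leq\; e^{-L/32}+e^{-L/(8\Den)},
\]
and summing over $L=1,\dots,n$ gives $\E[U]\leq 32+8\Den$. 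This averaging argument resolves, at every scale of $\Den$, exactly the ``$\tfrac14$ stationary, $\tfrac34$ unknown'' mixture that defeats per-round sharpening or coupling. In passing, the additive $32$ comes from this Chernoff tail, not from the last $O(1)$ rounds as you speculate.
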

 Notice that \cref{thm:tmix<<thit} for $\gamma=0$ claims 
   that $\E[U] = \Order(1)$ if $\func(i)=\Theta(\thit(i))$ on the appropriate condition. 
 A natural question remains unsettled 
   whether  $\E[U] = \Order(1)$ requires $\func(i)=\Omega(\thit(i))$ 
     for any RWoGG $(\func,(G^{(i)})_{i=1}^{\infty},(P^{(i)})_{i=1}^{\infty})$. 
 As a consequence of  \cref{thm:tmix<<thit}, for example, 
  we obtain a bound for degree restricted expander graphs, 
    for which $\thit(i)=\Order(i)$ 
    and $\tmix(i)=\Order(\log i)$ hold, 
  that $\E[U]=\Order(n^{\gamma})$ if $\func(i)=\Omega(i^{1-\gamma})$ for $\gamma\in [0,1)$; 
   see \cref{cor:expander}, for detail. 
 We also remark that 
   the upper bound by \cref{thm:tmix<<thit} is tight for growing complete graphs, 
    for which $\thit(i)=\Theta(i)$ and $\tmix(i)=\Theta(1)$ hold; 
   see \cref{thm:complete_graph}~\ref{item:complete_o(i)}  (\cref{cor:simpleKn}\ref{lab:simpleKnlower}) for the lower bound. 

 Though the condition of $\tmix \ll \thit$ covers 
   many interesting examples of rapidly mixing random walks,  
   it misses many examples, such as random walks on paths and lollipop graphs, 
    interested in the context of hitting and cover times. 
 Then, we provide for those examples the following \cref{thm:symmetry,thm:simplelazyRW}.

\begin{theorem}
\label{thm:simplelazyRW}
 Let $(\func,(G^{(i)})_{i=1}^{\infty},(P^{(i)})_{i=1}^{\infty})$ be a RWoGG 
   such that $P^{(i)}$ is lazy and simple, and  
   that 
    for all $i$ ($2<i\leq n$), $\frac{|E^{(i)}|}{|E^{(i-1)}|}\leq 1+\frac{L}{i}$ hold for some positive constant $L$.
Let $C>0$ and $\gamma\in [0,1]$ be arbitrary constants.
If $\func(i)\geq \left(\frac{C}{i^\gamma}+\frac{L+1}{2i}\right)\thit(i)$ holds for any $1<i\leq n$, then $\E[U]\leq \sqrt{L+1}\frac{n^{\gamma}}{C}$.
\end{theorem}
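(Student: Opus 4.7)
I would write $\E[U] = \sum_{j=2}^n \Pr[\tau_{v_j} > T_{n+1}]$, where $\tau_{v_j}$ denotes the first step after $T_j$ at which the walk visits $v_j$, and bound each summand by a second-moment (Paley--Zygmund) argument. Let $N_j$ count the visits to $v_j$ during $[T_j, T_{n+1}]$; then $\Pr[N_j = 0] \leq \VarM(N_j)/\E[N_j^2]$, which reduces the task to a lower bound on $\E[N_j]$ and an upper bound on $\VarM(N_j)$.

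For the first moment, I would argue that once the walk has been on $G^{(i)}$ for a constant multiple of $\thit(i)$ steps, the marginal of $Z_t$ places weight comparable to the simple-walk stationary mass $\pi^{(i)}(v_j) = d_{v_j}^{(i)}/(2|E^{(i)}|)$ on each $v_j \in V^{(i)}$; the hypothesis $\func(i) \geq (C/i^\gamma)\thit(i)$ guarantees a positive fraction of such post-hitting-time steps per epoch, so each epoch contributes roughly $\func(i)\pi^{(i)}(v_j)$ to $\E[N_j]$. Across epochs, the edge-growth condition $|E^{(i)}|/|E^{(i-1)}| \leq 1 + L/i$ yields $\pi^{(i)}(v_j) \geq \pi^{(i-1)}(v_j)/(1+L/i)$, so $\pi^{(i)}(v_j)$ loses at most the factor $\prod_{k=j+1}^{i}(1+L/k)^{-1}$ after $v_j$ appears. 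The $(L+1)/(2i)$ summand inside the hypothesis on $\func(i)$ is calibrated to absorb precisely this telescoping loss, yielding $\E[N_j]$ of order $C/j^\gamma$ up to an $L$-dependent constant.

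For the second moment, reversibility and laziness supply the return-time identity $\E_{v_j}[\tau_{v_j}^+] = 1/\pi^{(i)}(v_j)$ on each $G^{(i)}$, which bounds the expected number of revisits to $v_j$ within an epoch and hence $\E[N_j^2]$ in terms of $\E[N_j]$ through a Green-function decomposition. Inserting both moments into Paley--Zygmund produces a per-$j$ bound on $\Pr[N_j = 0]$; summing over $j$ and invoking Cauchy--Schwarz of the form $\sum_j b_j \leq n^{\gamma/2}(\sum_j b_j^2)^{1/2}$ (with $b_j$ the per-vertex survival factor) extracts the $\sqrt{L+1}$ prefactor from the telescoping product $\prod_k(1+L/k)$ and produces the target $\sqrt{L+1}\,n^\gamma/C$.

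The principal obstacle is second-moment control without any mixing-time assumption: unlike in \cref{thm:tmix<<thit}, where $\tmix(i) \ll \thit(i)$ forces rapid decorrelation, here correlations may persist for order-$\thit(i)$ steps. I would use reversibility to rewrite any within-epoch cross-moment $\Pr[Z_s = Z_t = v_j]$ as $\pi^{(i)}(v_j)(P^{(i)})^{|t-s|}(v_j, v_j)$ and peel off the Green-function contribution via the return-time identity, the most delicate step being the bookkeeping of contributions across the different $G^{(i)}$ traversed during $[T_j, T_{n+1}]$ and the matching of $L$-dependent losses against the $(L+1)/(2i)$ correction in the hypothesis.
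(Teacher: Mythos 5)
Your proposal takes a genuinely different route from the paper — a Paley--Zygmund/second-moment argument on the visit counts $N_j$ — but I think it has a real gap at its center. The paper's proof never argues that the marginal of $Z_t$ is close to stationary: instead it derives Theorem~\ref{thm:simplelazyRW} from Theorem~\ref{thm:upper_moderately} via a Cauchy--Schwarz split of $\E[U]$ into a ``forward'' term $\Nor<\nu^{(k-1)}_{\func(k-1)}/\pi^{(k-1)}-\mathbbm{1}>_{2,\pi^{(k-1)}}$ (tracked recursively through Lemma~\ref{lem:increasing_l2discrepancy}, which only controls how much this discrepancy can \emph{grow} when a vertex is attached) and a ``survival'' term $\Nor<\mu^{(k-1)}_{v_k}>_{2,\pi^{(k-1)}}$ bounded using the killed-walk operator $P^{(i)}_{\overline{v_k}}$ whose top eigenvalue is $\leq 1-1/\thit(i)$ (Lemma~\ref{lem:upper_eigen}). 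That last inequality is what replaces any mixing assumption: it gives geometric decay of the avoidance probability at rate $1/\thit(i)$ directly from reversibility and laziness, with no need for $\nu^{(i)}$ to ever resemble $\pi^{(i)}$.

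Your first-moment step, by contrast, asserts that ``once the walk has been on $G^{(i)}$ for a constant multiple of $\thit(i)$ steps, the marginal of $Z_t$ places weight comparable to the stationary mass'' — this is a mixing statement and it is false in the regime the theorem targets. For $\gamma$ close to $1$ the hypothesis permits $\func(i)$ as small as $\Theta(\thit(i)/i)$; on a growing path, $\tmix(i)\asymp\thit(i)\asymp i^2$, so each round is far too short for the law of $Z_t$ to approach $\pi^{(i)}$, and indeed Theorem~\ref{thm:path_lowerbound} shows the marginal stays far from uniform (it remains monotone and concentrated near the old end). Your second-moment step inherits the same problem: with no mixing, $N_j$ is far from Poissonian — the walk either never reaches $v_j$ or revisits it $\Theta(1/\pi(v_j))$ times — so $\E[N_j^2]\gg\E[N_j]^2$ and Paley--Zygmund gives essentially nothing. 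You flag this as the ``principal obstacle'' but the proposed Green-function bookkeeping does not resolve it. Finally, the mechanism you describe for producing the $\sqrt{L+1}$ prefactor is not the one that works here: in the paper the $\frac{L+1}{2i}$ slack in $\func(i)$ simultaneously keeps the $\nu$-discrepancy bounded by $\max_i i(r_i-1)+1\leq L+1$ (Lemma~\ref{lem:L2_upper_gen}) and absorbs the $\sqrt{r_i}$ loss per round in the $\mu$-norm (the inequality $\sqrt{r_i}(1-1/\thit(i))^{\func(i)}\leq(1-1/\thit(i))^{\func(i)-\frac{r_i-1}{2}\thit(i)}$ in Lemma~\ref{thm:upper_specific}); it is not merely compensating a decay in $\pi^{(i)}(v_j)$.
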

 We will later give a tight example for \cref{thm:simplelazyRW}; 
 \cref{thm:path_lowerbound} gives a lower bound of $\E[U]$ for a growing {\em path} 
  (see also~\cref{cor:path}). 
 We will also demonstrate another example of 
   application of \cref{thm:simplelazyRW} to a growing {\em lollipop graph} (see \cref{thm:lollipop}), 
  where the static lollipop graph is well-known as 
   a tight example for the bounds $\thit=\Order(n^3)$ and $\tcov = \Order(n^3)$ 
   for a simple random walk for any graph. 
 
\begin{theorem}\label{thm:symmetry}
 Let $(\func,(G^{(i)})_{i=1}^{\infty},(P^{(i)})_{i=1}^{\infty})$ be a RWoGG 
  such that $P^{(i)}$ is lazy and symmetric. 
 Let $C>0$ and $\gamma\in [0,1]$ be arbitrary constants.
 If $\func(i)\geq \left(\frac{C}{i^\gamma}+\frac{2}{i}\right)\thit(i)$ for all $1<i\leq n$, 
  then $\E[U]\leq \frac{\sqrt{3}n^\gamma}{C}$.
\end{theorem}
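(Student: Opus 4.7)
The plan is to follow the template used for \cref{thm:simplelazyRW}, exploiting the additional fact that a symmetric $P^{(j)}$ has the uniform stationary distribution $\pi=\mathbf{1}/j$, which should let us tighten the constants from $\sqrt{L+1}$ down to $\sqrt{3}$. First, by linearity of expectation,
\begin{equation*}
\E[U] \;=\; \sum_{i=1}^{n} \Pr\!\left[\,v_i \text{ is never visited during } [T_i, T_{n+1}]\,\right],
\end{equation*}
where $v_i$ denotes the vertex added to form $G^{(i)}$. To estimate each summand I would partition $[T_i,T_{n+1}]$ into the rounds $[T_j,T_{j+1})$, $j=i,\dots,n$; during round $j$ the process evolves under the time-homogeneous chain $P^{(j)}$ on $G^{(j)}$ for exactly $\func(j)$ steps. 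Applying the Markov property at each boundary time $T_j$ and relaxing to the worst starting state yields the product bound
\begin{equation*}
\Pr\!\left[v_i \text{ unvisited in } [T_i, T_{n+1}]\right] \;\leq\; \prod_{j=i}^{n}\, \max_{u \in V^{(j)}} \Pr\!\left[\tau_{v_i}^{(j)} > \func(j)\,\Big|\,Z_{T_j}=u\right],
\end{equation*}
where $\tau_v^{(j)}$ is the hitting time of $v$ under $P^{(j)}$.

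The crucial technical ingredient is a non-hitting inequality tailored to lazy symmetric chains, sharper than what holds in the general lazy reversible setting. Specifically, for such a chain on $i$ vertices with hitting time $\thit$ and any $v$, I aim to prove an estimate of the form
\begin{equation*}
\max_{u}\,\Pr\!\left[\tau_v > t\,\big|\,X_0=u\right] \;\leq\; 1 \;-\; c\!\left(\frac{t}{\thit} - \frac{2}{i}\right)_{\!+}
\end{equation*}
for a constant $c$ ultimately producing the factor $\sqrt{3}$ in the conclusion. To establish it I would work with the number of visits $N_v(t)=\sum_{s=0}^t \mathbf{1}\{X_s=v\}$ and the elementary identity $\E[N_v(t)\mid X_0=u] = \Pr[\tau_v\leq t\mid X_0=u]\cdot\E[N_v(t-\tau_v)\mid X_0=v,\tau_v\leq t]$. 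Because $\pi(v)=1/i$ by symmetry, $\E[N_v(t)\mid X_0=u]$ concentrates around $(t+1)/i$ up to a mixing-type error; meanwhile, the return-time excess $\E[N_v(t)\mid X_0=v]\leq 1 + t/i + O(\sqrt{t\thit}/i)$ is controlled by a Cauchy--Schwarz estimate on the spectral expansion $P^s(v,v)=\sum_k \lambda_k^s f_k(v)^2$, whose terms are nonnegative thanks to laziness. The additive $2/i$ correction in the theorem's hypothesis is precisely what absorbs this return-time overcount.

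Substituting $\func(j)\geq(C/j^\gamma+2/j)\thit(j)$ into the product bound yields a per-round factor of at most $1 - cC/j^\gamma$, so that
\begin{equation*}
\E[U] \;\leq\; \sum_{i=1}^{n}\exp\!\left(-cC\sum_{j=i}^n j^{-\gamma}\right).
\end{equation*}
An integral comparison $\sum_{j=i}^n j^{-\gamma}\asymp (n^{1-\gamma}-i^{1-\gamma})/(1-\gamma)$ together with the change of variables $u=i^{1-\gamma}$ reduces the right-hand side to at most $\sqrt{3}\,n^\gamma/C$, the constant being pinned down by an elementary optimization uniform in $\gamma\in[0,1]$. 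The main obstacle is the non-hitting lemma itself: verifying the precise $2/i$ correction so that it is exactly cancelled by the hypothesis, and extracting a sharp enough constant $c$, both of which require careful use of reversibility and the uniform stationary measure through a Cauchy--Schwarz step on the spectral expansion of $P^{(j)}$.
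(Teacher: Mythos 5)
Your opening decomposition---bounding the probability that $v_i$ is never visited by a product of per-round, worst-case non-hitting probabilities---is exactly the paper's \cref{lem:unvisit_hit}, but the non-hitting lemma you then need is false in precisely the regime this theorem is designed for. Take the lazy Metropolis (or lazy simple) walk on a path of length $i$, which is the prime application via \cref{cor:path}. With $v$ one endpoint and $u$ the other, $\thit(i)=\Theta(i^2)$, and for $t=\func(i)\approx Ci^{2-\gamma}$ with $\gamma\in(0,1]$ the ratio $t/\thit(i)$ tends to $0$. A diffusive (reflection-principle) estimate gives $\Pr[\tau_v\leq t\mid X_0=u]=\exp(-\Theta(\thit(i)/t))$, which is super-polynomially smaller than the $c\left(t/\thit(i)-2/i\right)_{+}$ that your inequality would force for any fixed $c>0$. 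So $\max_u\Pr[\tau_v>t\mid X_0=u]$ is exponentially close to $1$, and the product you build is vacuous. The underlying issue is that your visit-count argument assumes $\E[N_v(t)\mid X_0=u]\approx t/i$ ``up to a mixing-type error,'' but on paths $\tmix(i)\asymp\thit(i)$, so that error dominates and no lower bound on the hitting probability that is linear in $t/\thit$ and uniform over starting vertices can hold.

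This is precisely why the paper does not maximize over starting vertices in \cref{sec:const}. It retains the actual distribution of the walker entering each round, writes $\E[U]=\sum_{k}\sum_{v}\nu_{\func(k-1)}^{(k-1)}(v)\,\mu^{(k-1)}_{v_k}(v)$, and applies Cauchy--Schwarz in $\ell_2(\pi^{(k-1)})$ to split into two controllable quantities: the $\ell_2(\pi)$-discrepancy of the walker's distribution from stationarity (\cref{lem:L2_upper_gen}, via a recurrence driven by the per-round ratio $r_i=\max_v \pi^{(i-1)}(v)/\pi^{(i)}(v)$), and the $\ell_2(\pi)$-norm of the non-hitting function $\mu^{(k-1)}_{v_k}$ (\cref{lem:munorm}, via the substochastic matrix $P^{(i)}_{\overline{v_k}}$ and the spectral bound $\lambda_1\bigl(P^{(i)}_{\overline{v_k}}\bigr)\le 1-1/\thit(i)$). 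Geometric decay of the non-hitting probability is available from near-stationary initial distributions (\cref{lem:multihit}), which is what the discrepancy control buys; it is simply not available worst-case. Your $2/i$ and $\sqrt{3}$ do appear in the paper's proof, but with entirely different meaning: symmetry makes $\pi^{(i)}$ uniform, so $r_i=i/(i-1)\le 1+2/i$, and $\sqrt{3}=\sqrt{\max_{i>1}\{i(r_i-1)+1\}}$ (attained at $i=2$), not the outcome of an optimization over $\gamma$.
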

 A typical application of \cref{thm:symmetry} is  
   a lazy Metropolis walk with the uniform stationary distribution (see \cref{thm:Metro} for details), 
    which often appears in the context of Markov chain Monte Carlo. 
 Nonaka et al.~\cite{Nonaka10} proved 
   that the Metropolis achieves $\thit(i)=\Order(i^2)$ for any connected graph. 
 The upper bound by \cref{thm:symmetry} is also tight 
   for a Metropolis walk on a growing path (\cref{thm:path_lowerbound,cor:path}).

\paragraph*{A lower bound for a growing path {\rm (\cref{sec:path})}.}
In contrast to upper bounds, 
   an analysis of a lower bound requires more technically complicated arguments. 
 We establish a lower bound of $\E[U]$ for a random walk on a growing path graph, 
  which implies that the upper bound by \cref{thm:symmetry} is tight in the case. 
Let $R_{\rm p}=(\func,(G^{(i)})_{i=1}^\infty,(P^{(i)})_{i=1}^\infty)$ 
   be a random walk on a growing {\em path} graph, 
    where $G^{(i)}=(V^{(i)},E^{(i)})$ is given by $V^{(i)}=\{v_1,\ldots,v_i\}$, 
       and $E^{(i)}=\{\{v_1,v_2\}$, $\ldots$, $ \{v_{i-1},v_i\}\}$, and 
 $P^{(i)}$ is given by 
\begin{align} \label{eqn:transition_matrix_path}
    (P^{(i)})_{u,v} = \begin{cases}
    p & \text{if $u=v=v_1$ or $u=v=v_i$},\\
    1-p & \text{if $(u,v)\in\{(v_1,v_2),(v_{i},v_{i-1})\}$},\\
    q & \text{if $\{u,v\} = \{v_j,v_{j+1}\}$ for $j=2,3,\ldots,i-1$},\\
    1-2q & \text{if $u=v=v_j$  for $j=2,3,\ldots,i-1$},\\
    0 & \text{otherwise}
    \end{cases}
\end{align}
for two parameters $p,q\in[0,1]$ satisfying $p \geq q$ and $q\leq 1/2$ (see \cref{fig:chain}).
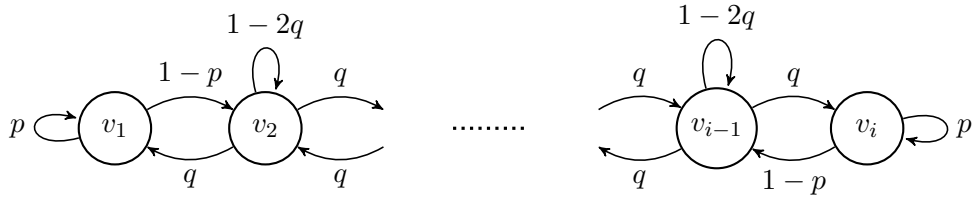
\begin{figure}[tbp]
\begin{center}
\begin{tikzpicture}[->, >=stealth', auto, semithick, node distance=2cm]
\tikzstyle{every state}=[fill=white,draw=black,thick,text=black,scale=1]
\node[state]    (v1)               {$v_1$};
\node[state]    (v2)[right of=v1]   {$v_2$};
\node[state,draw=white]    (v3)[right of=v2]   {};
\node[state,draw=white]    (v4)[right of=v3]   {};
\node[state]    (v5)[right of=v4]   {$v_{i-1}$};
\node[state]    (v6)[right of=v5]   {$v_i$};
\draw[-,dotted,very thick] (v3)--(v4);
\path
(v1) edge[loop left]     node{$p$}           (v1)
     edge[bend left]     node{$1-p$}         (v2)    
(v2) edge[bend left]     node{$q$}           (v1)
(v2) edge[loop above]    node{$1-2q$}        (v2)
(v2) edge[bend left]     node{$q$}           (v3)
(v3) edge[bend left]     node{$q$}           (v2)
(v4) edge[bend left]     node{$q$}           (v5)
(v5) edge[loop above]    node{$1-2q$}        (v5)
(v5) edge[bend left]     node{$q$}           (v4)
(v5) edge[bend left]     node{$q$}           (v6)
(v6) edge[bend left]     node{$1-p$}         (v5)
(v6) edge[loop right]    node{$p$}           (v6);
\end{tikzpicture}
\end{center}
\caption{The transition diagram of \cref{eqn:transition_matrix_path}.\label{fig:chain}}
\end{figure}
For example, if $(p,q)=(\frac{1}{2},\frac{1}{4})$, the corresponding walk is the lazy simple random walk.
If $(p,q)=(\frac{3}{4},\frac{1}{4})$ the corresponding one is the lazy Metropolis random walk (see \cref{def:Metro} for the definition of Metropolis random walk).

Suppose, for instance, that $\func(i)=Ci$ for a sufficiently large constant $C>0$.
Then, the walker walks
$\sum_{i=1}^n \func(i)\approx C^2n^2/2$ steps in total,
which is larger than the cover time of a lazy simple random walk on the path of length $n$.
Thus, one may expect that $\E[U]=\Order(1)$. 
However, this is not the case. 

\begin{theorem} \label{thm:path_lowerbound}
 If $\func(i)\leq Ci^{2-\gamma}$ in $R_{\rm p}$ for some constants $C>0$ and $\gamma\in[0,1]$ 
    then $\E[U]=\Omega(n^{\gamma}/C)$.
\end{theorem}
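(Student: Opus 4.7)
The plan is to exploit the fact that the underlying graph at each time is a path and the walker starts at $v_1$, so the set of visited vertices is always a prefix $\{v_1,\ldots,v_M\}$, where $M=M_{T_{n+1}}$ is the largest index ever visited. Hence $U(n)=n-M_{T_{n+1}}$, and it suffices to lower-bound $\Pr[U(n)\geq k]$ by a positive constant for $k=\lceil cn^{\gamma}/C\rceil$ with a small constant $c>0$, because then $\E[U(n)]\geq k\Pr[U(n)\geq k]=\Omega(n^{\gamma}/C)$.

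I would split this probability by conditioning on $Z_{T_{n-k+1}}$, the walker's position at the instant $v_{n-k+1}$ is attached. The first (and central) step is to show that $\Pr[Z_{T_{n-k+1}}\leq v_{\lfloor(n-k)/2\rfloor}]\geq 1/2$. I would prove this inductively, window by window, using the reflection symmetry of every $P^{(i)}$ about the midpoint of $V^{(i)}$. Concretely, the signed measure $\nu_t = \Pr_{v_1}[Z_t=\cdot]-\Pr_{v_i}[Z_t=\cdot]$ (with $i$ the current graph size) is antisymmetric under this reflection, and for a birth-death chain one can show inductively that it remains nonnegative on the closed left half throughout any window $[T_i,T_{i+1})$. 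The delicate part is the handoff $i\to i+1$: appending $v_{i+1}$ shifts the axis of reflection by half a vertex, and one must verify that the left-half dominance survives the re-centering (intuitively it does because the newly attached vertex carries zero mass at the instant of attachment).

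The second step is a hitting-time estimate. Before $v_{n-k+1}$ is first visited, the walker's transitions on $\{v_1,\ldots,v_{n-k}\}$ are identical in every $G^{(i)}$ with $i\geq n-k+1$, since transitions at $v_j$ for $j\leq n-k$ are unaffected by further additions. Thus the pre-absorption dynamics is the standard lazy birth-death chain on $\{v_1,\ldots,v_{n-k+1}\}$ with $v_{n-k+1}$ absorbing and $v_1$ reflecting; a direct first-passage calculation gives $\E[\tau_{v_{n-k+1}}-T_{n-k+1}\mid Z_{T_{n-k+1}}=v_j]\geq c'(n-k)^2$ for all $j\leq (n-k)/2$, with $c'>0$ depending only on $p,q$. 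Since $T_{n+1}-T_{n-k+1}\leq \sum_{i=n-k+1}^n\func(i)\leq Ckn^{2-\gamma}$, picking $c$ small enough ensures this is at most half the conditional expected hitting time, so Markov's inequality yields $\Pr[\tau_{v_{n-k+1}}>T_{n+1}\mid Z_{T_{n-k+1}}\leq v_{(n-k)/2}]\geq 1/2$.

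Multiplying the two halves gives $\Pr[U(n)\geq k]\geq 1/4$ and hence $\E[U(n)]\geq k/4=\Omega(n^{\gamma}/C)$, as desired. The hard part is the left-half concentration in the first step: maintaining it inductively as the reflection axis shifts with every graph growth is the main technical obstacle, and I expect most of the effort to go there, while the hitting-time bound and the final combination are routine.
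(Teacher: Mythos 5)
Your high-level strategy matches the paper's: both arguments lower-bound $\E[U]$ by showing (a) at the round when some vertex $v_R$ is attached, the walker is still likely far to the left, and (b) conditioned on that, the walker is unlikely to traverse the remaining $\Theta(n)$ distance before the process ends, so a $\Theta(n^\gamma)$-sized suffix of vertices stays unvisited. However, both of your two sub-steps have concrete gaps as written.

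For the ``left-half'' step, your invariant --- that the signed measure $\nu_t=\mu-\tilde\mu$ (with $\tilde\mu$ the reflection of $\mu$) is pointwise nonnegative on the left half --- is not obviously preserved by a single transition of the birth-death chain. Expanding $(\mu P-\tilde\mu P)(v_j)$ you get a nonnegative combination of $\nu_t(v_{j-1})$, $\nu_t(v_j)$, and $\nu_t(v_{j+1})$; when $j$ is at the boundary of the left half, $j+1$ is in the right half where $\nu_t(v_{j+1})=-\nu_t(v_{i-j})\leq 0$, so pointwise nonnegativity can fail. The correct inductive invariant (and what the coupling argument actually preserves) is either stochastic dominance of $\mu$ over its reflection, or the stronger statement the paper proves: the position distribution $x^{(i)}_j$ is \emph{monotone} in the vertex index ($x^{(i)}_j[v_1]\geq x^{(i)}_j[v_2]\geq\cdots$). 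Monotonicity is trivially maintained when a new zero-mass vertex is appended and, by a short per-coordinate check, is maintained by each birth-death transition; it directly gives $\Pr[X^{(k)}_0\preceq v_L]\geq L/n$ with no re-centering issues at all. Your version would need to be reformulated in one of these stronger ways to close the induction.

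The hitting-time step has a sharper error: Markov's inequality bounds $\Pr[\tau>t]\leq \E[\tau]/t$ from \emph{above}; knowing $\E[\tau\mid X_0=v_j]\geq 2T$ does \emph{not} give $\Pr[\tau>T]\geq 1/2$ (take $\tau$ to be $0$ with probability $3/4$ and $8T$ with probability $1/4$). You need a genuine lower bound on the tail of the hitting time, and the paper gets it by a different route: it dominates the walker's maximal displacement over $T$ steps by the running maximum $\max_{c\leq T}|S_c|$ of a $\pm 1$ random walk (since, conditioned on moving, each step is $\pm 1$ with equal probability), and then applies the Kolmogorov maximal inequality to conclude $\Pr[\max_{c\leq T}|S_c|\geq R-L]\leq T/(4(R-L)^2)$. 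Replacing your Markov step with this maximal-inequality (or a Paley--Zygmund / reflection-principle) argument would fix the gap; the final parameter choices and the multiplication of the two probabilities are then routine, as you say.
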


 \cref{thm:simplelazyRW,thm:symmetry,thm:path_lowerbound} imply the following tight bounds of $\E[U]$ on a growing path.
\begin{corollary} \label{cor:path}
 For $R_{\rm p}=(\func,(G^{(i)})_{i=1}^{\infty},(P^{(i)})_{i=1}^\infty$, where $P^{(i)}$ is the transition matrix of either the lazy simple random walk or the lazy Metropolis random walk.
Then
\begin{enumerate}[label=(\arabic*)]
    \item\label{item:path_upper} 
      If $\func(i)\geq Ci^{2-\gamma}$ for some constants $C>0$ and $\gamma\in[0,1]$ 
    then $\E[U]=\Order(n^{\gamma}/C)$.
    \item\label{item:path_lower} 
      If $\func(i)\leq Ci^{2-\gamma}$ for some constants $C>0$ and $\gamma\in[0,1]$ 
    then $\E[U]=\Omega(n^{\gamma}/C)$.
\end{enumerate}
\end{corollary}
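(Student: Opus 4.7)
The plan is to derive both parts by direct application of the earlier theorems, once a few standard facts about the path are recorded. First, for both the lazy simple random walk and the lazy Metropolis walk on a path of $i$ vertices, a classical gambler's-ruin computation (or the commute-time identity, together with the $\Order(n^2)$ bound of Nonaka et al.\ for the Metropolis case) yields $\thit(i) = \Theta(i^2)$. Second, since $|E^{(i)}| = i-1$, one has $|E^{(i)}|/|E^{(i-1)}| = (i-1)/(i-2) \leq 1 + 2/i$ for $i$ large enough, so the edge-growth hypothesis of \cref{thm:simplelazyRW} holds with $L = 2$. Third, the lazy Metropolis walk on the path is symmetric: its stationary distribution is uniform and the proposal step is symmetric, hence every off-diagonal transition probability equals its reverse, so \cref{thm:symmetry} applies to it.

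For part \ref{item:path_upper}, assume $\func(i) \geq C i^{2-\gamma}$. Using $\thit(i) = \Theta(i^2)$, the right-hand side of the hypothesis of \cref{thm:simplelazyRW}, namely $\left(\frac{C'}{i^\gamma}+\frac{L+1}{2i}\right)\thit(i)$, is of order $C' i^{2-\gamma}+i$. Choosing a constant $C'$ proportional to $C$ (absorbing the implicit constant from $\thit(i)=\Theta(i^2)$ and the lower-order linear term into $C i^{2-\gamma}$), the hypothesis is met and \cref{thm:simplelazyRW} yields $\E[U] \leq \sqrt{L+1}\,n^{\gamma}/C' = \Order(n^\gamma/C)$ for the lazy simple random walk. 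The lazy Metropolis case is handled identically, using \cref{thm:symmetry} in place of \cref{thm:simplelazyRW}.

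For part \ref{item:path_lower}, both walks fit the transition matrix \eqref{eqn:transition_matrix_path}: the lazy simple random walk corresponds to $(p,q)=(1/2,1/4)$, and the lazy Metropolis walk to $(p,q)=(3/4,1/4)$ (at an endpoint of degree $1$, the Metropolis acceptance ratio is $1/2$, which laziness converts into a self-loop probability of $3/4$). Both pairs satisfy $p \geq q$ and $q \leq 1/2$, so \cref{thm:path_lowerbound} applies directly and gives $\E[U] = \Omega(n^{\gamma}/C)$. There is essentially no substantive obstacle; the only mild care needed is tracking constants so that the clean hypothesis $\func(i)\geq C i^{2-\gamma}$ indeed implies the mixed hypothesis $\func(i)\geq (\frac{C'}{i^\gamma}+\frac{L+1}{2i})\thit(i)$ demanded by the upper-bound theorems.
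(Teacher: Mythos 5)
Your proposal is correct and follows exactly the route the paper intends: the paper offers no separate proof of \cref{cor:path}, simply asserting that it follows from \cref{thm:simplelazyRW} (for the lazy simple walk), \cref{thm:symmetry} (for the lazy Metropolis walk), and \cref{thm:path_lowerbound}, which is precisely the reduction you carry out. The only cosmetic point is that $|E^{(i)}|/|E^{(i-1)}| = 1 + 1/(i-2)$ exceeds $1 + 2/i$ at $i=3$, so one should take $L=3$ (or any $L\geq 3$) rather than $L=2$ to satisfy the hypothesis of \cref{thm:simplelazyRW} for all $2<i\le n$; this does not affect the $\Order(n^\gamma/C)$ conclusion.
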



\subsection{Related works}\label{sec:related-work}
 The cover time is a fundamental topic of analyses of random walks. 
 Here, we review some representative results 
   about the cover times of random walks on static graphs, and on dynamic graphs.  
   
\paragraph{Cover times of random walks on static graphs.}
 It is known that the cover time of a simple random walk satisfies 
  $\tcov \leq 2m(n-1)$ 
  for any undirected graph, 
   see Aleliunas et al.~\cite{Aleliunas79} and Aldous~\cite{Aldous83}. 
  Mathews~\cite{Matthews88} 
   devised a technique of upper and lower bounding $\tcov$ by $\thit$, 
    of which a celebrated implication is $\tcov \leq \thit \log n$.
 The lolipop graph is famous 
   for  $\thit = \Omega(n^3)$, and hence $\tcov = \Omega(n^3)$. 
  Fiege gave a tight upper bound of the cover times of simple random walks on any graphs 
    such that $\tcov  \leq \frac{4}{27}n^3 + \Order(n^{5/2})$ in~\cite{Feige95up}, 
  while he in \cite{Feige95low} 
    gave a tight lower bound of the cover time of simple random walks on any graphs 
   such that $\tcov  \geq n \ln n + \order(n \ln n)$,  
     using a Mathews' argument~\cite{Matthews88}. 
The connection between the hitting time and electric circuits is well known (see e.g.,~\cite{DS84,AF02,LP17}). 

 Motivated by a faster covering by a random walk,  
 Ikeda et al.~\cite{Ikeda03} (see also \cite{Ikeda09}) proposed $\beta$-random walk, 
   which makes transitions only using local information, and proved that 
   the cover time of  a $\beta$-random walk is upper bounded by $\Order(n^2 \log n)$ for any graph. 
 Nonaka et al.~\cite{Nonaka10} proved the same bound holds for a Metropolis walk, 
   which is simpler and more popular than $\beta$-random walk.  
 Recently, 
   David and Feige~\cite{DF17} (see also \cite{DF18}) proved that 
   a biased random walk achieves $\Order(n^2)$ cover time for any graph, and 
   affirmatively settled the question posed by Ikeda et al.~\cite{Ikeda03}. 


\paragraph{Cover time of random walks on dynamic graphs.}
 An early work~\cite{CF03} by Cooper and Frieze 
   investigated random walks on ``web-graphs,'' 
    where the number of vertices increases every constant steps, 
     i.e., corresponding to constant $\func$ in our model, and 
     where $G^{(n)}$ is 
     a preferential attachment graph. 
 Then,  
   they were concerned with the expected 
     proportion of vertices visited by a random walk, and 
  they revealed that it converges to some constant 
   accordingly $\E[U]/n$ converges to some constant in our context, 
  asymptotic to $n$. 

 There are several results 
    about the cover  times of random walks on dynamic graphs, 
       sometimes called ``evolving graphs,'' 
     with static vertex sets. 
 Avin et al.~\cite{AKL08} (see also \cite{AKL18}) 
   investigated the hitting times, mixing times and cover times of 
    random walks on evolving graphs with static vertex sets. 
 They gave a prescribed sequence of graphs on which 
   the hitting time of a simple random walk gets $2^{\Omega(n)}$, 
     and hence the cover time is as well.  
 On the other hand, 
    they proved that 
      the cover time of a max-degree random walk is $\Order(d_{\max}n^3(\log n)^2)$ 
      where $d_{\max}$ is the maximum degree of the evolving graph. 
  Denysyuk and Rodrigues~\cite{DR14}
    were concerned with {\em $\rho$-recurrent family} of evolving graphs, 
      where preferable graphs are assumed  to appear frequently in the graph sequence. 
 Then, for max-degree random walks on $\rho$-recurrent families, 
  they gave upper and lower bounds of the cover time in terms of the hitting time, 
    as well as gave an upper bound of the mixing time. 
  Lamprou et al.~\cite{LMS18} 
   were concerned with two random walks of 
    ``random walk with a delay'' (RWD),  
     where at  each step, the walker chooses an edge of underlying graph and moves when it appears, and 
    ``random walk on what is available'' (RWA),  
     where the walker chooses an edge of current graph and moves immediately. 
  Then, 
    they investigated the cover times of RWD and RWA 
     for edge-uniform stochastically evolving graphs. 
  Sauerwald and Zanetti~\cite{SZ19} 
    extended the argument by Avin et al.~\cite{AKL18} 
      in the case that a sequence of graphs have the same stationary distribution, and 
    presented an upper bound $\Order(n^2)$ 
     of the cover time on $d$-regular dynamic graphs.

\paragraph{Other related works.}
 Saloff-Coste and Z\'{u}\~{n}iga 
   investigated time-inhomogeneous Markov chains, and 
   provided  some Nash and log-Sobolev inequalities \cite{Saloff-Coste09,Saloff-Coste11}. 
  Recently, 
   Cai et al.~\cite{CSZ20} investigated the relation 
     between the density of edge-Markovian dynamic graphs and mixing times. 
 They showed for fast-changing dynamic graphs 
    that $\tmix = \infty$ in sparse case 
    while $\tmix = \Order(\log n)$ in dense case.  
 They also showed for slowly-changing dynamic graphs 
    that $\tmix = \Omega(n)$ in sparse case 
    while $\tmix = \Order(\log n)$ in dense case.  
 Random walk on dynamic graph is also interested in data mining. 
  Yu and McCann~\cite{YM16} presented an analysis 
    on ``random walk with restart,''
     which is used 
          as a measure of proximity between vertices of a graph in the context,  
   over dynamic graphs. 

 There are many works on other stochastic processes on dynamic graphs, 
  such as exploration, information spreading, rumor spreading, gossiping and voter model, 
    see e.g.,  \cite{Augustine16,ClementiDC15,BGKM16}. 
 Theoretical analyses of algorithms on dynamic graphs 
     attract high attentions in the context of distributed computing, and 
  there are many works concerning the topics, 
    such as 
     connectivity, exploration, gathering, agreement,  flooding and 
     population protocol, on dynamic networks, 
      see e.g., ~\cite{MS18,Michail16,KO11}.  
 

\section{Complete Graph} \label{sec:complete_graph}
This section proves \cref{thm:complete_graph}. 
Throughout this paper, we consider a random walk of length $T_{n+1}$.
For convenience, we divide the $T_{n+1}$ step random walk into $n$ random walks each of length $\func(i)$ (for $i=1,\ldots,n$).
We call each period \emph{round}.
For a round $i\in[n]$,
let $(X^{(i)}_s)_{s=0}^\infty$ denote
   a random walk in the $i$-th round (specifically, it is a random walk according to $P^{(i)}$) with the initial state $X^{(i)}_0=Z_{T_i}= X^{(i-1)}_{\func(i-1)}$.
Note that $(X^{(i)}_s)_{s=0}^\infty$ is a random walk on $G^{(i)}$.
 \Cref{tbl:ZtoX} illustrates the correspondence between $Z_t$ and $X^{(i)}_s$ in the case of $\func(i)=i$.
 
%
\begin{figure}[tbp]
\begin{center}
\begin{tabular}{|l|l|l|l|l|l|l|l|l|l|l|}
\hline
          & $Z_0$       & $Z_1$       & $Z_2$       & $Z_3$       & $Z_4$       & $Z_5$       & $Z_6$       & $Z_7$   & $Z_8$   & $\cdots$ \\ \hline
$G^{(1)}$ & $X^{(1)}_0$ & $X^{(1)}_1$ & $\cdots$    &             &             &             &             &             &      &    \\ \hline
$G^{(2)}$ &             & $X^{(2)}_0$ & $X^{(2)}_1$ & $X^{(2)}_2$ & $\cdots$    &             &             &             &      &    \\ \hline
$G^{(3)}$ &             &             &             & $X^{(3)}_0$ & $X^{(3)}_1$ & $X^{(3)}_2$ & $X^{(3)}_3$ & $\cdots$    &      &    \\ \hline
$G^{(4)}$ &             &             &             &             &             &             & $X^{(4)}_0$ & $X^{(4)}_1$ &$X^{(4)}_2$ &$\cdots$  \\ \hline
\end{tabular}
\caption{Correspondence between $Z_t$ and $X^{(i)}_s$ when $\func(i)=i$.
For each $i\in\Nat$, $(X^{(i)}_s)_{s=0,1,\ldots}$ is a random walk on $G^{(i)}$.
Note that $X^{(i)}_0=X^{(i-1)}_{\func(i-1)}=Z_{T_i}$ holds for $i=2,3,\ldots$.
In this example, $U(3)=3-\left|\bigcup_{t=0}^{T_{3+1}} \{Z_t \} \right|=3-\left|\bigcup_{i=1}^{3}\bigcup_{s=0}^{i}\{X_s^{(i)} \} \right|$.
\label{tbl:ZtoX}}
\end{center}
\end{figure}

 For $v\in V^{(n)}$ let $\mathcal{E}(v)$ denote
the event that $v\not\in \bigcup_{i=1}^n \bigcup_{s=0}^{\func(i)} \{X_s^{(i)}\}$ 
  ($=\bigcup_{t=0}^{T_{n+1}} \{Z_t\}$).  
In other words,
$\mathcal{E}(v)$ means that the random walk $Z_0,Z_1,\ldots, Z_{T_{n+1}}$  
does not visit the vertex $v$.
For the vertex $v_k$ attached to ${\cal G}$ at time $T_k$, 
we see that 
$\Pr[\mathcal{E}(v_k)]=\prod_{i=k}^n\left(1-\frac{1}{i}\right)^{\func(i)}$ holds, and thus
\begin{align*}
    \E[U] &= \sum_{k=1}^n \Pr[\mathcal{E}(v_k)]
    = \sum_{k=1}^n \prod_{i=k}^n\left(1-\frac{1}{i}\right)^{\func(i)}
\end{align*}
 holds. \Cref{thm:complete_graph} follows the next lemma.
\begin{lemma} \label{lem:Snbound}
For a function $f:\Nat\to\Nat$, let
$S(n)\defeq \sum_{k=1}^n \prod_{i=k}^n \left(1-\frac{1}{i}\right)^{f(i)}$.
\begin{enumerate}[label=(\roman*)]
    \item\label{item:upper} If $f(i)\geq Ci$ for some constant $C$,
    then $S(n)=\Order(1)$.
    \item\label{item:tightlowerbound} If $f$ satisfies $f(i)\leq f(i+1)$ for all $i\in\Nat$,
    then $S(n)\geq \frac{n}{f(n)+1}\left(1-\frac{1}{n}\right)^{f(n)}$.
    \item\label{item:tightupperbound} If $f$ satisfies $\frac{f(i)}{i}\geq \frac{f(i+1)}{i+1}$,
    then for all $n\in\Nat$,
    $S(n)\leq \frac{n}{f(n)}$.
    \item\label{item:tightupperbound2}
    If there is a constant $c\in\Nat$ such that $f(i)=c$ for all $i\in\Nat$,
    then for all $n\in\Nat$,
    $S(n)\leq \frac{n}{c+1}$.
\end{enumerate}
\end{lemma}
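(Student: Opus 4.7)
The plan is to dispatch the four parts independently, relying on two basic tools: the elementary inequality $1-\tfrac{1}{i}\leq \mathrm{e}^{-1/i}$ for the upper-bound parts, and the telescoping identity $\prod_{i=k}^{n}\bigl(1-\tfrac{1}{i}\bigr) = \tfrac{k-1}{n}$ combined with monotonicity of $f$ for the tight parts.

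For part~\ref{item:upper}, the hypothesis $f(i)\geq Ci$ gives $\prod_{i=k}^{n}(1-\tfrac{1}{i})^{f(i)} \leq \exp\!\bigl(-\sum_{i=k}^{n} f(i)/i\bigr) \leq \mathrm{e}^{-C(n-k+1)}$, so $S(n)$ is dominated by a geometric series summing to $1/(\mathrm{e}^{C}-1)=\Order(1)$.

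For part~\ref{item:tightlowerbound}, monotonicity $f(i)\leq f(n)$ on $i\leq n$ lower-bounds each factor by $(1-\tfrac{1}{i})^{f(n)}$, and telescoping yields
\[
S(n) \;\geq\; \sum_{k=1}^{n}\Bigl(\frac{k-1}{n}\Bigr)^{f(n)} \;=\; \frac{1}{n^{f(n)}}\sum_{j=0}^{n-1} j^{f(n)}.
\]
It then remains to prove the power-sum estimate $\sum_{j=0}^{n-1} j^{m}\geq n(n-1)^{m}/(m+1)$ for $m=f(n)\geq 0$. The naive integral bound $\int_{0}^{n-1} x^{m}\,\mathrm{d}x$ falls short by a factor of $(n-1)/n$, so I would use instead the trapezoidal inequality applied to the convex function $x\mapsto x^{m}$, which yields $\sum_{j=1}^{n-1}j^{m}\geq n^{m+1}/(m+1) - n^{m}/2$, and then verify algebraically that this majorises $n(n-1)^{m}/(m+1)$ for $m\geq 1$ (the $m=0$ case is an equality).

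For part~\ref{item:tightupperbound}, combine $(1-\tfrac{1}{i})^{f(i)}\leq \mathrm{e}^{-f(i)/i}$ with the hypothesis that $f(i)/i\geq f(n)/n$ for $i\leq n$ to obtain $\prod_{i=k}^{n}(1-\tfrac{1}{i})^{f(i)}\leq \mathrm{e}^{-(n-k+1)f(n)/n}$; summing the geometric series in $k$ and invoking $\mathrm{e}^{x}-1\geq x$ gives $S(n)\leq 1/(\mathrm{e}^{f(n)/n}-1)\leq n/f(n)$. For part~\ref{item:tightupperbound2}, with $f\equiv c$ the product is exact, $\prod_{i=k}^{n}(1-\tfrac{1}{i})^{c} = ((k-1)/n)^{c}$, so $S(n)=n^{-c}\sum_{j=0}^{n-1}j^{c}\leq n^{-c}\int_{0}^{n} x^{c}\,\mathrm{d}x = n/(c+1)$, this time using only that a left Riemann sum underestimates the integral of the increasing map $x\mapsto x^{c}$.

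The main technical nuisance is matching the precise constant claimed in part~\ref{item:tightlowerbound}: the standard Riemann bound is off by the multiplicative factor $(n-1)/n$, so a trapezoidal correction (or a brief Euler--Maclaurin computation) is needed to recover the exact $n/(f(n)+1)$ leading coefficient. The remaining three parts reduce to one-line applications of $1+x\leq \mathrm{e}^{x}$, the telescoping product, and monotone Riemann comparisons.
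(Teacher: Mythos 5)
Parts~\ref{item:upper}, \ref{item:tightupperbound}, and \ref{item:tightupperbound2} of your proposal are correct; part~\ref{item:upper} is the same exponential bound the paper uses, while for \ref{item:tightupperbound} and \ref{item:tightupperbound2} the paper instead establishes the recurrence $S(n+1)=\bigl(1-\tfrac{1}{n+1}\bigr)^{f(n+1)}(S(n)+1)$ and proceeds by induction (using $(1-x)^y\leq 1/(1+xy)$). Your geometric-series and Riemann-sum arguments are a legitimate alternative route for those two parts.

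However, part~\ref{item:tightlowerbound} as you have written it contains a genuine error. You reduce the claim to the power-sum estimate $\sum_{j=0}^{n-1}j^{m}\geq n(n-1)^{m}/(m+1)$ with $m=f(n)$, and propose to deduce it from the trapezoidal bound $\sum_{j=1}^{n-1}j^{m}\geq n^{m+1}/(m+1)-n^{m}/2$. That trapezoidal inequality is itself correct (sum the trapezoid estimate for $x^m$ over $[0,n]$), but it does \emph{not} majorize $n(n-1)^m/(m+1)$: the required algebraic inequality is equivalent to $1-(1-1/n)^m\geq (m+1)/(2n)$, which fails whenever $m$ is moderately large relative to $n$. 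Concretely, at $n=2$, $m=3$ your trapezoidal bound gives $16/4-8/2=0$, while the target is $2\cdot 1/4=1/2$. So the chain of inequalities breaks. The power-sum estimate you want is in fact true, and one fix within your strategy is to run the trapezoidal comparison over $[0,n-1]$ instead, giving $\sum_{j=1}^{n-1}j^{m}\geq (n-1)^{m+1}/(m+1)+(n-1)^{m}/2$, which dominates $n(n-1)^m/(m+1)$ exactly when $m\geq 1$. The paper avoids the whole issue by a short induction on $n$ using the recurrence above and $(1+x)^r\geq 1+rx$, which is cleaner and dodges the endpoint sensitivity of the Riemann-sum route.
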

\begin{proof}[Proof of \ref{item:upper}]
Since $1+x\leq \mathrm{e}^x$, we have
\begin{align*}
    S(n) &\leq \sum_{k=1}^n \exp\left(-\sum_{i=k}^n\frac{f(i)}{i}\right)
    \leq \sum_{k=1}^n \exp\left(-(n-k+1)C\right) = \Order(1).
\end{align*}
\end{proof}
\begin{proof}[Proof of \ref{item:tightlowerbound}]
Observe that $S(1)=0$ and for all $n\geq 1$,
\begin{align}
S(n+1)&= \sum_{k=1}^{n+1} \prod_{i=k}^{n+1} \left(1-\frac{1}{i}\right)^{f(i)} 
=\left(1-\frac{1}{n+1}\right)^{f(n+1)}\left(S(n)+1\right).
\label{eqn:rec}
\end{align}
We prove \ref{item:tightlowerbound} by induction on $n$.
In the base case, $S(1)=0$ and we are done.
If $S(n)\geq \frac{n}{f(n)+1}\left(1-\frac{1}{n}\right)^{f(n)}$, then
\begin{align}
S(n)+1&\geq \frac{n}{f(n)+1}\left(1-\frac{1}{n}\right)^{f(n)}+1
\geq \frac{n}{f(n)+1}\left(1-\frac{f(n)}{n}\right)+1 \nonumber \\
&=\frac{n-f(n)}{f(n)+1}+1
=\frac{n+1}{f(n)+1}\geq \frac{n+1}{f(n+1)+1}. \label{eqn:lower1}
\end{align}
Here, we used $(1+x)^r\geq 1+rx$ in the second inequality and $f(n)\leq f(n+1)$ in the last inequality.
Combining \cref{eqn:rec,eqn:lower1}, $S(n+1)\geq \left(1-\frac{1}{n+1}\right)^{f(n+1)}\frac{n+1}{f(n+1)+1}$ and we are done.
\end{proof}
\begin{proof}[Proof of \ref{item:tightupperbound}]
The proof is obtained by induction on $n\geq 1$. 
When $n =1$, $S(1)=0\leq 1/f(1)$.
Assume $S(n)\leq n/f(n)$. 
Then,  
\begin{align*}
S(n+1)&=\left(1-\frac{1}{n+1}\right)^{f(n+1)}\left(S(n)+1\right)
\leq \frac{\frac{n}{f(n)}+1}{1+\frac{f(n+1)}{n+1}}
\leq \frac{\frac{n+1}{f(n+1)}+1}{1+\frac{f(n+1)}{n+1}}=\frac{n+1}{f(n+1)}.
\end{align*}
Note that $(1-x)^y\leq 1/(1+xy)$ for all $x\in [0,1]$ and $y\geq 0$.
The second inequality follows from $\frac{f(n+1)}{n+1}\leq \frac{f(n)}{n}$.
\end{proof}
\begin{proof}[Proof of \ref{item:tightupperbound2}]
The proof is obtained by induction on $n$.
First $S(1)=0\leq 1/(f(1)+1)$.
Assume $S(n)\leq n/(f(n)+1)$.
Then, from \cref{eqn:rec} and the induction assumption, we have
\begin{align*}
S(n+1)
&\leq \frac{\frac{n}{f(n)+1}+1}{1+\frac{f(n+1)}{n+1}}
=\frac{\frac{n}{f(n)+1}+1}{1+\frac{f(n)}{n+1}}
=\frac{\frac{n+1}{f(n)+1}\left(\frac{n}{n+1}+\frac{f(n)+1}{n+1}\right)}{\frac{n}{n+1}+\frac{f(n)+1}{n+1}}
=\frac{n+1}{f(n)+1}
=\frac{n+1}{f(n+1)+1}.
\end{align*}
Note that we use $f(n)=f(n+1)$ in the first and the last equality.
\end{proof}

We are ready to prove \cref{thm:complete_graph}. 
\begin{proof}[Proof of \cref{thm:complete_graph}]
Recall that $\E[U]=S(n)$.
Statement~\ref{item:complete_Omega(i)}
follows from  \cref{lem:Snbound}\ref{item:upper}.
Statement~\ref{item:complete_o(i)}
follows from \ref{item:tightlowerbound}
and \ref{item:tightupperbound} of \cref{lem:Snbound}.
\ref{item:complete_const}
follows from
\ref{item:tightlowerbound}
and \ref{item:tightupperbound2} of \cref{lem:Snbound}.

Now, we prove Statement~\ref{lab:complete_omega(i)}.
More precisely, we prove that,
for any $\epsilon>0$,
there is $n_0\in\Nat$
such that
for all $n\geq n_0$,
$S(n)\leq \epsilon$ holds.
From the assumption that $\func(i)=\omega(i)$,
for any large constant $C>0$,
we can take $i_0\in\Nat$
such that for all $i\geq i_0$, $f(i)>Ci$ holds.
Fix a constant $C>0$
and take $i_0$ in this way.
Since $1+x\leq \mathrm{e}^x$ and
$f(k)/k>C$ for all $k\geq i_0$,
we have
\begin{align*}
    S(n)&\leq \sum_{i=1}^{i_0}
    \exp\left(-\sum_{k=i_0}^n\frac{f(k)}{k}\right)
    +
    \sum_{i=i_0+1}^n
    \exp\left(-\sum_{k=i}^n\frac{f(k)}{k}\right) \\
    &\leq i_0\exp(-(n-i_0+1)C)+\sum_{i=i_0+1}^n \exp(-(n-i+1)C) \\
    &\leq i_0\exp(-(n-i_0+1)C)+\frac{\mathrm{e}^{-C}}{1-\mathrm{e}^{-C}}.
\end{align*}
Let $\epsilon>0$ be an arbitrary small constant.
Then, take $C>0$ such that
$\frac{\mathrm{e}^{-C}}{1-\mathrm{e}^{-C}}<\frac{\epsilon}{2}$
holds.
According to this constant $C$,
we can take $i_0$ such that $f(i)>Ci$ for all $i\geq i_0$ holds.
Now $C$ and $i_0$ are fixed.
Hence, for sufficiently large $n$,
we have $i_0\exp(-(n-i_0+1)C)\leq \frac{\epsilon}{2}$.
This implies $S(n)\leq \epsilon$
and we are done.
\end{proof}

\paragraph{Remark.}
 We remark on some monotonicity of $\E[U]$ with respect to $\func$. 
 Suppose functions $\func^*$ and $\func$ satisfy 
  $\func^*(i)\geq \func(i)$ for all $i$. 
 Let $U^*(n)$ and $U(n)$ respectively denote the numbers of unvisited vertices at the end of $n$-th round 
  for $R^*_{\rm c}=(\func^*,(G^{(i)})_{i=1}^{\infty},(P^{(i)})_{i=1}^{\infty})$ and 
  $R_{\rm c}=(\func,(G^{(i)})_{i=1}^{\infty},(P^{(i)})_{i=1}^{\infty})$. 
 Then, $\E[U^*(n)]\leq \E[U(n)]$ is clear. 
From this observation, 
 \cref{lem:Snbound} implies the following proposition, 
  which is a variant of \cref{thm:complete_graph} \ref{item:complete_Omega(i)}, \ref{item:complete_o(i)} and \ref{item:complete_const}.  
\begin{proposition}
\label{cor:simpleKn}
Let $C>0,\gamma\in [0,1]$ be arbitrary constants.
For $R_{\mathrm{c}}=(\func,(G^{(i)})_{i=1}^\infty,(P^{(i)})_{i=1}^\infty)$, the following holds:
\begin{enumerate}[label=(\arabic*)]
\item \label{lab:simpleKnupper} If $\func(i)\geq Ci^{1-\gamma}$ for all $i$, then $\E[U]\leq \frac{n^\gamma}{C}$.
\item \label{lab:simpleKnlower} If $\func(i)\leq Ci^{1-\gamma}$ for all $i$, then $\E[U]\geq \frac{n^{\gamma}}{C+n^{\gamma-1}}\left(1-\frac{1}{n}\right)^{Cn^{1-\gamma}}\geq \frac{n^\gamma}{C}-1$.
\end{enumerate}
\end{proposition}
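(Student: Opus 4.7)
The plan is to derive both inequalities as immediate corollaries of \cref{lem:Snbound} through the monotonicity remark stated just above the proposition, namely $\E[U^*]\leq \E[U]$ whenever $\func^*\geq\func$ pointwise. In each part the natural comparison function is the boundary $\func^*(i)\defeq Ci^{1-\gamma}$; the corresponding quantity $S^*(n)=\sum_{k=1}^n\prod_{i=k}^n(1-1/i)^{\func^*(i)}$ equals $\E[U^*]$ by the calculation preceding \cref{lem:Snbound}, and the proofs of parts \ref{item:tightlowerbound} and \ref{item:tightupperbound} of that lemma (which only invoke $(1+x)^r\geq 1+rx$ and $(1-x)^y\leq 1/(1+xy)$) remain valid for real-valued exponents, so the lemma applies with $f=\func^*$ without any need to round.

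For \ref{lab:simpleKnupper}, the hypothesis $\func\geq\func^*$ yields $\E[U]\leq S^*(n)$; since $\func^*(i)/i=Ci^{-\gamma}$ is non-increasing for $\gamma\in[0,1]$, \cref{lem:Snbound}\ref{item:tightupperbound} gives $S^*(n)\leq n/\func^*(n)=n^\gamma/C$. For \ref{lab:simpleKnlower}, the hypothesis $\func\leq\func^*$ yields $\E[U]\geq S^*(n)$, and since $\func^*$ is non-decreasing for $\gamma\in[0,1]$, \cref{lem:Snbound}\ref{item:tightlowerbound} gives
\[
S^*(n)\geq \frac{n}{\func^*(n)+1}\left(1-\frac{1}{n}\right)^{\func^*(n)}=\frac{n^\gamma}{C+n^{\gamma-1}}\left(1-\frac{1}{n}\right)^{Cn^{1-\gamma}},
\]
which is the first claimed lower bound.

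The remaining step is the elementary estimate $\frac{n^\gamma}{C+n^{\gamma-1}}(1-1/n)^{Cn^{1-\gamma}}\geq n^\gamma/C-1$; this is where I expect the bulk of the calculation to lie. My plan is to combine the bound $\frac{n^\gamma}{C+n^{\gamma-1}}\geq \frac{n^\gamma}{C}-\frac{n^{2\gamma-1}}{C(C+n^{\gamma-1})}$ with Bernoulli's inequality $(1-1/n)^{Cn^{1-\gamma}}\geq 1-Cn^{-\gamma}$; after expansion, the dominant subtractive term is precisely $\frac{n^\gamma}{C}\cdot Cn^{-\gamma}=1$, which is why the ``$-1$'' appears, and the trivial case $n^\gamma\leq C$ can be treated separately since the claim is then non-positive. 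The same multiplication also confirms that the upper and lower bounds agree up to leading order $n^\gamma/C$, giving the asymptotic tightness that the proposition is meant to record.
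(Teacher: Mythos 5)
Your route---passing to the boundary function $\func^*(i)=Ci^{1-\gamma}$, invoking the monotonicity observation, and then applying \cref{lem:Snbound}\ref{item:tightupperbound} and \ref{item:tightlowerbound}---is exactly the paper's intended (one-line) derivation, and your point that those two parts of the lemma go through verbatim for real-valued $f\geq 1$ is correct and is precisely what makes the substitution legitimate. This cleanly yields the upper bound $n^\gamma/C$ and the intermediate lower bound $\frac{n^\gamma}{C+n^{\gamma-1}}(1-1/n)^{Cn^{1-\gamma}}$.

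The final step, however, cannot be carried out, because the inequality $\frac{n^\gamma}{C+n^{\gamma-1}}(1-1/n)^{Cn^{1-\gamma}}\geq \frac{n^\gamma}{C}-1$ is false over much of the allowed parameter range. Carry your Bernoulli bound through: $\frac{n^\gamma}{C+n^{\gamma-1}}\bigl(1-Cn^{-\gamma}\bigr)=\frac{n^\gamma-C}{C+n^{\gamma-1}}$, which has the same numerator as $\frac{n^\gamma}{C}-1=\frac{n^\gamma-C}{C}$ but a strictly larger denominator; in the only nontrivial case $n^\gamma>C$ the comparison therefore goes the wrong way, and the second-order correction from the exponential (of size $\Order(n^{-2\gamma})$) outweighs the deficit of size $\Theta(n^{\gamma-1})$ only when $\gamma<1/3$. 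Concretely, take $\gamma=1$, $C=1$, $n=4$: the middle quantity is $\frac{4}{1+1}\cdot\frac{3}{4}=\frac{3}{2}$, while $n^\gamma/C-1=3$; moreover $\func^*\equiv 1$ forces $\func\equiv 1$, and then $\E[U]=S(4)=0+\frac{1}{4}+\frac{1}{2}+\frac{3}{4}=\frac{3}{2}$, so the end-to-end claim $\E[U]\geq 3$ fails outright (consistent with \cref{thm:complete_graph}\ref{item:complete_const}, which puts $\E[U]\approx n/(c+1)=n/2$ for $\func\equiv 1$). The trailing ``$\geq n^\gamma/C-1$'' in the proposition thus appears to be an error in the paper---the informal remark after \cref{thm:complete_graph} only asserts the leading order $\E[U]\approx n^\gamma/C$, not a uniform additive-$1$ bound. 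Your derivation of the first lower bound is correct and is as far as the argument legitimately goes; the sketch you give for the last inequality should be dropped rather than expanded.
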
 
\section{Upper Bound Analysis}
\label{sec:general_graphs}
In this section we show \cref{thm:simplified_main,thm:tmix<<thit,thm:symmetry,thm:simplelazyRW}.
Consider a random walk on a growing graph $R=(\func, (G^{(n)})_{n=1}^{\infty}, (P^{(n)})_{n=1}^{\infty})$.
Recall that, at each round $i$, $(X_t^{(i)})_{t=0}^\infty$ denotes the random walk according to $P^{(i)}$ where $X^{(i)}_0=X^{(i-1)}_{\func(i-1)}$ holds (See \cref{tbl:ZtoX} for an example). 
Let $\pi^{(i)}$ denote the stationary distribution of $P^{(i)}$.
Let $\tau^{(i)}_v\defeq \min\{t\geq 0:X_t^{(i)}=v\}$, i.e., $\tau^{(i)}_v$ denotes the time taken for a random walk $(X_t^{(i)})_{t=0}^\infty$ to reach $v\in V^{(i)}$.
Note that $\thit(i)=\max_{u,v\in V}\E[\tau_v^{(i)}| X_0^{(i)}=u]$.
Suppose that the initial position is fixed, i.e., $X_0^{(1)}=v_1$.
For any round $k\leq n$, the probability that the walker does not visit the vertex $v_k$ until the end of the round $n$ is equal to $\Pr\left[\bigwedge_{i=k}^n\left\{\tau^{(i)}_{v_k}>\func(i)\right\}\right]$.
Hence we have
\begin{align}
\E[U]
&=\sum_{k=1}^n\Pr\left[\bigwedge_{i=k}^n\left\{\tau^{(i)}_{v_k}>\func(i)\right\}\right]
=\sum_{k=2}^n\Pr\left[\bigwedge_{i=k}^n\left\{\tau^{(i)}_{v_k}>\func(i)\right\}\right] \nonumber\\
&=\sum_{k=2}^n\sum_{v\in V^{(k-1)}}\Pr\left[X_0^{(k)}=v\right]\Pr\left[\bigwedge_{i=k}^n\left\{\tau^{(i)}_{v_k}>\func(i)\right\}\middle| X_0^{(k)}=v\right] \label{eq:upperEU_gen_prev}\\
&\leq \sum_{k=2}^n\max_{v\in V^{(k-1)}}\Pr\left[\bigwedge_{i=k}^n\left\{\tau^{(i)}_{v_k}>\func(i)\right\}\middle| X_0^{(k)}=v\right].
\label{eq:upperEU_gen}
\end{align}
The second equality follows from $\Pr[X_1^{(1)}\neq v_1]=0$.
The rest of this section is devoted to give upper bounds of \cref{eq:upperEU_gen} (or \cref{eq:upperEU_gen_prev}).
\subsection{Upper bound for large $\func$}
\label{sec:simple_upper}
We show \cref{thm:simplified_main} in this section.
To begin with, we show the following useful lemma.
\begin{lemma}
\label{lem:unvisit_hit}
For any $R=(\func, (G^{(i)})_{i=1}^{\infty}, (P^{(i)})_{i=1}^{\infty})$, we have
\begin{align*}
\E[U]&\leq \sum_{k=2}^n\prod_{i=k}^n\max_{v\in V^{(i)}}\Pr\left[\tau^{(i)}_{v_k}>\func(i)\middle|X_0^{(i)}=v\right].
\end{align*}
\end{lemma}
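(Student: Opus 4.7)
The plan is to start from the inequality (\ref{eq:upperEU_gen}) that has just been established and to bound each of the conditional probabilities $\Pr[\bigwedge_{i=k}^n A_i \mid X_0^{(k)}=v]$, where $A_i := \{\tau^{(i)}_{v_k}>\func(i)\}$, by a product of the one-round probabilities appearing in the statement. The key idea is to exploit the fact that, conditional on the starting point $X_0^{(i+1)}$ of the next round, the walks across different rounds are independent, because $(X_t^{(i+1)})_{t\ge 0}$ is a Markov chain with transition matrix $P^{(i+1)}$ and its only coupling to the previous round is via $X_0^{(i+1)}=X_{\func(i)}^{(i)}$.

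Concretely, I would proceed by downward induction on $k$ (from $n$ to $2$), defining
\[
Q_k(v)\defeq \Pr\!\left[\bigwedge_{i=k}^n A_i \,\middle|\, X_0^{(k)}=v\right], \qquad v\in V^{(k)}.
\]
Decomposing the event on the distribution of $X_{\func(k)}^{(k)}$ and applying the Markov property gives
\[
Q_k(v)=\sum_{u\in V^{(k)}}\Pr\!\left[A_k,\,X_{\func(k)}^{(k)}=u \,\middle|\, X_0^{(k)}=v\right]\cdot Q_{k+1}(u).
\]
Bounding $Q_{k+1}(u)\le \max_{u'\in V^{(k+1)}}Q_{k+1}(u')$ and summing over $u$ yields
\[
Q_k(v)\le \Pr\!\left[A_k \,\middle|\, X_0^{(k)}=v\right]\cdot \max_{u'\in V^{(k+1)}}Q_{k+1}(u').
\]
Taking the maximum over $v\in V^{(k)}$ on the left and iterating downward from $k$ to $n$ (using the convention $Q_{n+1}\equiv 1$) produces
\[
\max_{v\in V^{(k-1)}}Q_k(v)\le \max_{v\in V^{(k)}}Q_k(v)\le \prod_{i=k}^n\max_{v\in V^{(i)}}\Pr\!\left[\tau^{(i)}_{v_k}>\func(i)\,\middle|\,X_0^{(i)}=v\right],
\]
where the first inequality uses $V^{(k-1)}\subseteq V^{(k)}$. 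Substituting this into (\ref{eq:upperEU_gen}) and summing over $k$ gives the claimed bound.

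The only subtlety, which I would flag explicitly, is the justification for applying the Markov property across rounds: the random walks $(X_t^{(i)})_{t\ge 0}$ and $(X_t^{(i+1)})_{t\ge 0}$ are governed by different transition matrices $P^{(i)}$ and $P^{(i+1)}$, but by construction their only dependence is through the deterministic identity $X_0^{(i+1)}=X_{\func(i)}^{(i)}$, so conditional on $X_0^{(i+1)}$ the future is independent of the past. This is not really an obstacle, only a bookkeeping point, and it is the reason the product structure on the right-hand side of the lemma arises at all.
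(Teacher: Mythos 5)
Your argument is correct and is essentially the paper's proof presented recursively: where you define $Q_k$ and iterate downward, the paper unrolls the same telescoping structure explicitly as a nested sum over the end-of-round positions $u_k,\ldots,u_n$, applies the Markov property to each factor, and pulls out the maxima in the same way. The bookkeeping point you flag about the Markov property across rounds is exactly what the paper's equations (\ref{eq:product_canceling}) and (\ref{eq:Markov_property}) make precise.
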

\begin{proof}
Consider a fixed vertex $v_k$ with $k>1$.
For a round $i\geq k$ and a vertex $u\in V^{(i)}$, let $\mathcal{E}^{(i)}_{u}=\mathcal{E}^{(i)}_{u}(v_k)$ denote the event
that the walker is in vertex $u$ at the end of the $i$-th round without
visiting vertex $v_k$ during the round.
Formally $\mathcal{E}^{(i)}_u(v_k)$ is defined as the event of 
$\{\tau^{(i)}_{v_k}>\func(i)\}\wedge \{X_{\func(i)}^{(i)}=u\}$.
Then for any $u_{k-1}\in V^{(k-1)}$, 
\begin{align}
\Pr\left[\bigwedge_{i=k}^n\left\{\tau^{(i)}_{v_k}>\func(i)\right\}\middle| X_0^{(k)}=u_{k-1}\right]
&=\sum_{u_k\in V^{(k)}} \cdots \sum_{u_n\in V^{(n)}}\Pr\left[\bigwedge_{i=k}^n\mathcal{E}^{(i)}_{u_i}\middle| X_0^{(k)}=u_{k-1}\right]. 
\label{eq:summation_vi}
\end{align}
To bound \cref{eq:summation_vi}, we first observe that, for any vertices, $u_{k-1}\in V^{(k-1)},u_k\in V^{(k)},\ldots,u_n\in V^{(n)}$,
\begin{align}
\Pr\left[\bigwedge_{i=k}^n\mathcal{E}^{(i)}_{u_i}\middle| X_0^{(k)}=u_{k-1}\right]
&=\frac{\Pr\left[X_0^{(k)}=u_{k-1}, \mathcal{E}^{(k)}_{u_k}\right]}{\Pr\left[X_0^{(k)}=u_{k-1}\right]}\prod_{\ell=k+1}^n\frac{\Pr\left[X_0^{(k)}=u_{k-1},\bigwedge_{i=k}^\ell\mathcal{E}^{(i)}_{u_i}\right]}{\Pr\left[X_0^{(k)}=u_{k-1},\bigwedge_{i=k}^{\ell-1}\mathcal{E}^{(i)}_{u_i}\right]}
\label{eq:product_canceling}
\end{align}
holds. Then, from the definition of the conditional probability, we have $\frac{\Pr\left[X_0^{(k)}=u_{k-1},\mathcal{E}^{(k)}_{u_k}\right]}{\Pr\left[X_0^{(k)}=u_{k-1}\right]}=\Pr[\mathcal{E}^{(k)}_{u_k}|X_0^{(k)}=u_{k-1}]$
and 
\begin{align}
\frac{\Pr\left[X_0^{(k)}=u_{k-1},\bigwedge_{i=k}^\ell\mathcal{E}^{(i)}_{u_i}\right]}{\Pr\left[X_0^{(k)}=u_{k-1},\bigwedge_{i=k}^{\ell-1}\mathcal{E}^{(i)}_{u_i}\right]}
&=\Pr\left[\mathcal{E}^{(\ell)}_{u_\ell}\middle|X_0^{(k)}=u_{k-1},\bigwedge_{i=k}^{\ell-1}\mathcal{E}^{(i)}_{u_i}\right] \nonumber \\
&=\Pr\left[\mathcal{E}^{(\ell)}_{u_\ell}\middle|X_{\func(\ell-1)}^{(\ell-1)}=u_{\ell-1}\right]
=\Pr\left[\mathcal{E}^{(\ell)}_{u_\ell}\middle|X_{0}^{(\ell)}=u_{\ell-1}\right].
\label{eq:Markov_property}
\end{align}
We use the Markov property in the second equality.
The last equality follows from our assumption of
$X_{\func(\ell-1)}^{(\ell-1)}=X^{(\ell)}_0$.
Hence combining \cref{eq:summation_vi,eq:product_canceling,eq:Markov_property}, 
we have
\begin{align}
&\Pr\left[\bigwedge_{i=k}^n\left\{\tau^{(i)}_{v_k}>\func(i)\right\}\middle| X_0^{(k)}=u_{k-1}\right] \nonumber \\
&=\sum_{u_k\in V^{(k)}} \cdots \sum_{u_n\in V^{(n)}} \prod_{\ell=k}^n\Pr\left[\tau^{(\ell)}_{v_k}>f(\ell), X_{\func(\ell)}^{(\ell)}=u_{\ell} \middle|X_{0}^{(\ell)}=u_{\ell-1}\right]
\label{eq:knotvisited_rounds_equal} \\
&=\sum_{\substack{u_k\in V^{(k)}}} \Pr\left[\mathcal{E}^{(k)}_{u_k}\middle|X_{0}^{(k)}=u_{k-1}\right]
\sum_{\substack{u_{k+1}\in V^{(k+1)}}} \Pr\left[\mathcal{E}^{(k+1)}_{u_{k+1}}\middle|X_{0}^{(k+1)}=u_{k}\right]
\cdots  \sum_{\substack{u_{n}\in  V^{(n)}}} \Pr\left[\mathcal{E}^{(n)}_{u_{n}}\middle|X_{0}^{(n)}=u_{n-1}\right] \nonumber \\
&\leq \prod_{\ell=k}^n \max_{u\in V^{(\ell)}}\sum_{u_\ell\in V^{(\ell)}}\Pr\left[\mathcal{E}^{(\ell)}_{u_{\ell}}\middle|X_{0}^{(\ell)}=u\right]
= \prod_{\ell=k}^n \max_{u\in V^{(\ell)}} \Pr\left[\tau_{v_k}^{(\ell)}>\func(\ell) \middle| X_0^{(\ell)}=u\right].
\label{eq:knotvisited_rounds_upper} \end{align}
We obtain the claim from \cref{eq:upperEU_gen,eq:knotvisited_rounds_upper}.
\end{proof}
\begin{proof}[Proof of \cref{thm:simplified_main}\ref{lab:general}]
From the Markov inequality, for any $k\leq i$ and $v\in V^{(i)}$, we have
\begin{align*}
\Pr\left[\tau_{v_k}^{(i)}>\func(i)\middle|X_0^{(i)}=v\right]
\leq \frac{\E\left[\tau_{v_k}^{(i)}\middle|X_0^{(i)}=v\right]}{\func(i)}
\leq \frac{\thit(i)}{\func(i)}.
\end{align*}
Hence from \cref{lem:unvisit_hit}, we obtain
\begin{align*}
\E[U]
&\leq \sum_{k=1}^n\prod_{i=k}^n\frac{\thit(i)}{\func(i)}
\leq \sum_{k=1}^nC^{-(n-k+1)}
=\sum_{k=1}^nC^{-k}
\leq \frac{1}{C-1}.
\end{align*}
\end{proof}
\begin{proof}[Proof of \cref{thm:simplified_main}\ref{lab:general_omega}]
For an arbitrary (small) $\epsilon>0$, let $C=C(\epsilon)=\frac{2}{\epsilon}+1$.
From assumption on \ref{lab:general_omega}, 
we can take some $i_0=i_0(\epsilon)$ such that $\func(i)\geq C\thit(i)$ for all $i\geq i_0$.
Let $K=\max_{i\in [i_0]}\frac{\thit(i)}{\func(i)}$.
From~\cref{lem:unvisit_hit}, 
\begin{align*}
\E[U]
&\leq \sum_{i=1}^{i_0}\left(\prod_{k=i}^{i_0}\frac{\thit(k)}{\func(k)}\right)\left(\prod_{k=i_0+1}^{n}\frac{\thit(k)}{\func(k)}\right)+\sum_{i=i_0+1}^{n}\prod_{k=i}^n\frac{\thit(k)}{\func(k)}\\
&\leq C^{-(n-i_0)}\sum_{i=1}^{i_0}K^{i-i_0+1}+\sum_{i=i_0+1}^nC^{-(n-i+1)}\\
&=C^{-(n-i_0)}\sum_{i=1}^{i_0}K^{i}+\sum_{i=1}^{n-i_0}C^{-i}\\
&\leq C^{-(n-i_0)}\frac{K(1-K^{i_0})}{1-K}+\frac{1}{C-1}.
\end{align*}
Then we can take some $n_0=n_0(\epsilon)$ satisfying $C^{-(n-i_0)}\frac{K(1-K^{i_0})}{1-K}\leq \epsilon/2$.
Hence for any $n\geq n_0$, $\E[U]\leq \epsilon$ and we obtain the claim.
\end{proof}
\subsection{Upper bound for random walks with small mixing times}
\label{sec:expander}
In this section we show the following generalized version of  \cref{thm:tmix<<thit}.
\begin{theorem}
\label{thm:tmix<<thit_gen}
Suppose that $P^{(i)}$ is reversible and lazy in $R=(\func, (G^{(i)})_{i=1}^{\infty}, (P^{(i)})_{i=1}^{\infty})$.
Let $\Den>0$ be an arbitrary positive number. 
If $\func(i)\geq \frac{\thit(i)}{\Den}+2\tmix(i) $ for all $i\in [n]$, then $\E[U]\leq 8\Den+32$.
\end{theorem}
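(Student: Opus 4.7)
The plan is to apply \cref{lem:unvisit_hit}, reducing the task to bounding, for each round $i$, the factor
\[
p_i \defeq \max_{v\in V^{(i)}}\Pr\bigl[\tau^{(i)}_{v_k}>\func(i)\,\bigm|\,X^{(i)}_0=v\bigr],
\]
and then summing $\sum_{k=2}^n\prod_{i=k}^n p_i$. The strategy is a per-round ``mix then hit'' estimate. The hypothesis $\func(i)\geq \thit(i)/\Den+2\tmix(i)$ invites a natural split of the $i$-th round into a \emph{mixing block} of length $2\tmix(i)$ followed by a \emph{hitting block} of length $s \defeq \thit(i)/\Den$, and after the mixing block the definition of $\tmix(i)$ together with sub-multiplicativity of total-variation distance gives $\|\mu_v-\pi^{(i)}\|_{\mathrm{TV}}\leq 1/4$ for the conditional law $\mu_v$ of $X^{(i)}_{2\tmix(i)}$, uniformly in the starting vertex $v$.

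For the hitting block I would prove the stationary hitting lower bound
\[
\Pr_{\pi^{(i)}}\bigl[\tau^{(i)}_w\leq s\bigr]\geq \frac{s}{s+\thit(i)},
\]
valid for every $w\in V^{(i)}$; at $s=\thit(i)/\Den$ the right-hand side is at least $1/(\Den+1)$. Combining this with the TV bound via $|\Pr_{\mu_v}[A]-\Pr_{\pi^{(i)}}[A]|\leq \|\mu_v-\pi^{(i)}\|_{\mathrm{TV}}$ for the event $A=\{\tau^{(i)}_w\leq s\}$ yields a uniform estimate $p_i\leq 1-c/\Den$ for an explicit constant $c>0$; summing the resulting geometric series then gives $\E[U]=\Order(\Den)$, and book-keeping of the various small constants extracts the quantitative $8\Den+32$.

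The main obstacle is the stationary hitting inequality, which is the only point where reversibility and laziness of $P^{(i)}$ are used. I would prove it by visit counting: letting $N_w^s=\sum_{t=0}^{s-1}\mathbbm{1}[X^{(i)}_t=w]$, one has $\E_{\pi^{(i)}}[N_w^s]=s\pi^{(i)}(w)$, while conditioning on $\{\tau^{(i)}_w<s\}$ and applying the strong Markov property at the first visit to $w$ gives
\[
\E_{\pi^{(i)}}\bigl[N_w^s\,\bigm|\,\tau^{(i)}_w<s\bigr]\;\leq\; \sum_{t=0}^{s-1}(P^{(i)})^t(w,w)\;\leq\; \pi^{(i)}(w)\bigl(s+\thit(i)\bigr).
\]
Here the last inequality uses $(P^{(i)})^t(w,w)\geq \pi^{(i)}(w)$ for all $t\geq 0$ (which holds because a lazy reversible chain has non-negative eigenvalues) together with the fundamental-matrix identity
\[
\sum_{t\geq 0}\bigl((P^{(i)})^t(w,w)-\pi^{(i)}(w)\bigr)\;=\;\pi^{(i)}(w)\,\E_{\pi^{(i)}}[\tau^{(i)}_w]\;\leq\; \pi^{(i)}(w)\,\thit(i),
\]
valid for any reversible ergodic chain. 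Rearranging $s\pi^{(i)}(w)=\E_{\pi^{(i)}}[N_w^s]\leq \Pr_{\pi^{(i)}}[\tau^{(i)}_w<s]\cdot \pi^{(i)}(w)(s+\thit(i))$ gives the claim, after which everything else is straightforward geometric-series bookkeeping.
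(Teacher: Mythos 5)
Your stationary hitting inequality and its visit-counting proof are sound, and the mix-then-hit decomposition of each round is structurally aligned with what the paper does (its Lemma~\ref{lem:unvisit_mix_hit}). The gap is in the step that transfers the stationary bound to the post-mixing law $\mu_v$ via an \emph{additive} total-variation inequality. You have $\Pr_{\pi^{(i)}}[\tau^{(i)}_w\le s]\ge\frac{s}{s+\thit(i)}=\frac{1}{\Den+1}$ at $s=\thit(i)/\Den$, but transferring this with $\lvert\Pr_{\mu_v}[A]-\Pr_{\pi^{(i)}}[A]\rvert\le\|\mu_v-\pi^{(i)}\|_{\mathrm{TV}}\le\frac{1}{4}$ only yields $\Pr_{\mu_v}[A]\ge\frac{1}{\Den+1}-\frac{1}{4}$, which is nonpositive once $\Den\ge 3$. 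Your per-round factor is then bounded only by $p_i\le 1-\frac{1}{\Den+1}+\frac{1}{4}$, which is at least $1$ for $\Den\ge 3$: the geometric series never converges, and the claimed estimate $p_i\le 1-c/\Den$ does not follow. The theorem is supposed to hold for all $\Den>0$, and large $\Den$ is exactly the regime where $\func$ is small and the result is nontrivial, so this is a genuine break. Subtracting an $\Theta(1)$ TV error simply cannot preserve a $\Theta(1/\Den)$ hitting probability.

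The paper's proof replaces the additive TV step with a \emph{pointwise minorization}: for reversible lazy chains one has $(P^{(i)})^{2\tmix(i)}_{u,v}\ge\frac{1}{4}\pi^{(i)}(v)$ for all $u,v$, a consequence of the separation-versus-TV inequality $s_{\mathrm{sep}}(2t)\le 1-(1-\bar d(t))^2$ (the cited p.~338 of \cite{LP17}); laziness and reversibility are both used here. This gives the \emph{convex-mixture} decomposition $(P^{(i)})^{2\tmix(i)}_{u,\cdot}=\frac{1}{4}\pi^{(i)}+\frac{3}{4}\hat P^{(i)}_{u,\cdot}$, so the transfer is multiplicative, $\Pr_{\mu_v}[\tau_w\le s]\ge\frac{1}{4}\Pr_{\pi^{(i)}}[\tau_w\le s]\ge\frac{1}{4(\Den+1)}$, and hence $p_i\le 1-\frac{1}{4(\Den+1)}<1$ for every $\Den$. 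With that single change your proof would close (the paper then aggregates via a Chernoff/binomial argument, though a direct geometric sum also works); your visit-counting derivation of $\Pr_{\pi}[\tau_w\le s]\ge\frac{s}{s+\thit}$ is a perfectly valid and somewhat more elementary substitute for the paper's eigenvalue-based bound $\Pr_\pi[\tau_v>t]\le(1-1/\thit)^t$.
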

\begin{proof}[Proof of \cref{thm:tmix<<thit}]
For all $i$, it is straight forward to see that
\begin{align*}
    \func(i)\geq \frac{C\thit(i)}{i^\gamma}+\frac{2C\thit(i)}{i^\gamma}\geq \frac{\thit(i)}{n^\gamma/C}+2\tmix(i)
\end{align*}
from assumptions. Taking $\Den=n^\gamma/C$ in \cref{thm:tmix<<thit_gen}, we obtain the claim.
\end{proof}
To show \cref{thm:tmix<<thit_gen}, we introduce following two lemmas.
The first one generalizes \cref{lem:Snbound}\ref{item:upper}.
The second one is a useful variant of \cref{lem:unvisit_hit}.
\begin{lemma}
\label{lem:Kn}
For $f,h:\Nat\to \Nat$ and $n\in \Nat$, let
\begin{align*}
S(n)\defeq \sum_{k=1}^n \prod_{i=k}^n \left(1-\frac{1}{h(i)}\right)^{f(i)}.
\end{align*}
Let $\Den>0$ be an arbitrary number.
If $f(i)\geq \frac{h(i)}{\Den}$ for all $i\in [n]$, then $S(n)\leq \Den$.
\end{lemma}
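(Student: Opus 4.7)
The plan is to convert the product into an exponential, which turns $S(n)$ into (at most) a geometric series, and then bound that geometric series by $\Den$ using a standard lower bound on $\mathrm{e}^x - 1$.

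First I would apply the elementary inequality $1 - x \leq \mathrm{e}^{-x}$ to each factor, giving
\begin{align*}
  \left(1 - \frac{1}{h(i)}\right)^{f(i)} \leq \exp\!\left(-\frac{f(i)}{h(i)}\right) \leq \exp\!\left(-\frac{1}{\Den}\right),
\end{align*}
where the last step uses the hypothesis $f(i) \geq h(i)/\Den$. This is exactly the same opening move used in the proof of \cref{lem:Snbound}\ref{item:upper}; the only new twist is that the ratio $f(i)/h(i)$ is now bounded below by a quantity that need not be a constant.

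Second, I would multiply these pointwise bounds to obtain $\prod_{i=k}^n (1 - 1/h(i))^{f(i)} \leq \exp(-(n-k+1)/\Den)$, and then sum over $k$:
\begin{align*}
  S(n) \leq \sum_{k=1}^n \exp\!\left(-\frac{n-k+1}{\Den}\right) = \sum_{j=1}^n \mathrm{e}^{-j/\Den} \leq \frac{\mathrm{e}^{-1/\Den}}{1 - \mathrm{e}^{-1/\Den}} = \frac{1}{\mathrm{e}^{1/\Den} - 1}.
\end{align*}
Finally, invoking $\mathrm{e}^{x} \geq 1 + x$ for $x \geq 0$ with $x = 1/\Den$ gives $\mathrm{e}^{1/\Den} - 1 \geq 1/\Den$, so $S(n) \leq \Den$.

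There is no real obstacle here; the lemma is essentially a quantitative refinement of \cref{lem:Snbound}\ref{item:upper} in which the role of the constant $C$ is replaced by the possibly $n$-dependent parameter $1/\Den$, and the only care needed is to use the sharp bound $1/(\mathrm{e}^{1/\Den}-1) \leq \Den$ rather than the loose telescoping estimate that suffices in the constant-$C$ case. The proof uses nothing beyond the two standard exponential inequalities $1 - x \leq \mathrm{e}^{-x}$ and $\mathrm{e}^{x} \geq 1 + x$.
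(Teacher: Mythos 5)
Your proof is correct and follows essentially the same route as the paper's: bound each factor by $\exp(-f(i)/h(i)) \leq \exp(-1/\Den)$ via $1-x\leq \mathrm{e}^{-x}$, sum the resulting geometric series to get $1/(\mathrm{e}^{1/\Den}-1)$, and finish with $\mathrm{e}^{1/\Den}-1\geq 1/\Den$. The only difference is cosmetic bookkeeping of the summation index.
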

\begin{proof}
It is easy to check that
\begin{align*}
S(n)
&\leq \sum_{k=1}^n \prod_{i=k}^n \exp\left(-\frac{f(i)}{h(i)}\right)
= \sum_{k=1}^n \exp\left(-\sum_{i=k}^n\frac{f(i)}{h(i)}\right)
\leq \sum_{k=1}^n \exp\left(-\frac{n+k-1}{\Den}\right)\\
&= \sum_{k=1}^n \exp\left(-\frac{k}{\Den}\right)
\leq \frac{e^{-1/\Den}}{1-e^{-1/\Den}}
=\frac{1}{e^{1/\Den}-1}
\leq \Den.
\end{align*}
Note that we use $1+x\leq \mathrm{e}^x$ in the first and the last inequalities.
\end{proof}
\begin{lemma}
\label{lem:unvisit_mix_hit}
For any $R=(\func, (G^{(i)})_{i=1}^{\infty}, (P^{(i)})_{i=1}^{\infty})$ and any function $s:\mathbbm{N}\to \mathbbm{N}$ such that $s(i)<\func(i)$ holds for all $i$, we have 
\begin{align*}
\E[U]&\leq \sum_{k=2}^n\prod_{i=k}^n\max_{u\in V^{(i)}}\left(\sum_{v\in V^{(i)}}\left((P^{(i)})^{s(i)}\right)_{u,v}\Pr\left[\tau^{(i)}_{v_k}>\func(i)-s(i)\middle|X_0^{(i)}=v\right]\right).
\end{align*}
\end{lemma}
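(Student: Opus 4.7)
The plan is to mimic the proof of \cref{lem:unvisit_hit}, but to split each round's no-visit event into a burn-in phase of $s(i)$ steps, after which the chain has time to mix, followed by the remaining $\func(i)-s(i)$ steps on which we will later apply a Markov-type hitting-time bound. The extra factor $((P^{(i)})^{s(i)})_{u,v}$ appearing in the statement is precisely the distribution of the walker after this burn-in, starting from $u$.

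First I would repeat the factorization that gave the product formula in the proof of \cref{lem:unvisit_hit}. Define, as before, for $i\ge k$ and $u\in V^{(i)}$, the event $\mathcal{E}^{(i)}_u(v_k) = \{\tau^{(i)}_{v_k}>\func(i)\}\wedge \{X^{(i)}_{\func(i)} = u\}$. Starting from \cref{eq:upperEU_gen_prev}, the telescoping identity (\ref{eq:product_canceling}) combined with the Markov property (\ref{eq:Markov_property}) and the coupling $X^{(\ell)}_0 = X^{(\ell-1)}_{\func(\ell-1)}$ gives
\begin{align*}
\Pr\left[\bigwedge_{i=k}^n\{\tau^{(i)}_{v_k}>\func(i)\}\,\middle|\, X_0^{(k)}=u_{k-1}\right]
= \sum_{u_k,\dots,u_n} \prod_{\ell=k}^n \Pr\!\left[\mathcal{E}^{(\ell)}_{u_\ell}\,\middle|\, X_0^{(\ell)}=u_{\ell-1}\right].
\end{align*}
This reduces the task to bounding, round by round, $\sum_{u\in V^{(i)}} \Pr[\mathcal{E}^{(i)}_{u}\mid X^{(i)}_0 = u'] = \Pr[\tau^{(i)}_{v_k} > \func(i) \mid X^{(i)}_0 = u']$.

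The new ingredient is to apply the Markov property a second time, inside each round at the intermediate instant $s(i)$. Since $\{\tau^{(i)}_{v_k}>\func(i)\}$ in particular implies $\tau^{(i)}_{v_k}>s(i)$ and also implies that the walker does not hit $v_k$ during the subsequent $\func(i)-s(i)$ steps, conditioning on the position $X^{(i)}_{s(i)}=v$ and using the Markov property yields
\begin{align*}
\Pr\!\left[\tau^{(i)}_{v_k}>\func(i)\,\middle|\, X^{(i)}_0 = u\right]
&= \sum_{v\in V^{(i)}} \Pr\!\left[\tau^{(i)}_{v_k}>s(i),\, X^{(i)}_{s(i)}=v\,\middle|\, X^{(i)}_0=u\right]\Pr\!\left[\tau^{(i)}_{v_k}>\func(i)-s(i)\,\middle|\, X^{(i)}_0=v\right] \\
&\le \sum_{v\in V^{(i)}} \bigl((P^{(i)})^{s(i)}\bigr)_{u,v}\,\Pr\!\left[\tau^{(i)}_{v_k}>\func(i)-s(i)\,\middle|\, X^{(i)}_0=v\right],
\end{align*}
where the inequality simply drops the constraint $\tau^{(i)}_{v_k}>s(i)$ from the first factor. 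Maximizing over the starting vertex $u\in V^{(i)}$ and combining with the telescoping product above finishes the argument after summing over $k$ as in \cref{eq:upperEU_gen}.

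I do not expect any real obstacle: the argument is essentially bookkeeping on top of \cref{lem:unvisit_hit}. The only subtle point to watch is that one must drop the event $\tau^{(i)}_{v_k}>s(i)$ (rather than keep it inside a joint probability with $X^{(i)}_{s(i)}=v$), so that the $v$-sum collapses to exactly the row $((P^{(i)})^{s(i)})_{u,\cdot}$. Keeping that event would not yield the clean transition-matrix factor stated in the lemma, so phrasing the inequality in that order is the one place to be careful.
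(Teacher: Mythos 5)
Your proposal is correct and follows essentially the same route as the paper's proof: condition on the walker's position at the intermediate time $s(i)$, apply the Markov property to split the round into a burn-in phase and a hitting phase, drop the constraint $\tau^{(i)}_{v_k}>s(i)$ to obtain the clean factor $((P^{(i)})^{s(i)})_{u,v}$, and then plug this per-round bound into the telescoping product structure of \cref{lem:unvisit_hit}. The only cosmetic difference is that the paper simply cites \cref{lem:unvisit_hit} for the outer product rather than re-deriving it.
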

\begin{proof}
Fix $k\geq 2$ and $i$ satisfying $k\leq i\leq n$.
First, for any $u,v\in V^{(i)}$, from the definition of the conditional probability, we observe that
\begin{align*}
\lefteqn{\Pr\left[\tau^{(i)}_{v_k}>\func(i),X_{s(i)}^{(i)}=v\middle|X_0^{(i)}=u\right]}\\
&=\Pr\left[\tau^{(i)}_{v_k}>\func(i)\middle|X_{s(i)}^{(i)}=v,X_0^{(i)}=u,\tau^{(i)}_{v_k}>s(i)\right]
\Pr\left[X_{s(i)}^{(i)}=v,\tau^{(i)}_{v_k}>s(i)\middle|X_0^{(i)}=u\right]\\
&=\Pr\left[\tau^{(i)}_{v_k}>\func(i)-s(i)\middle|X_0^{(i)}=v\right]
\Pr\left[X_{s(i)}^{(i)}=v,\tau^{(i)}_{v_k}>s(i)\middle|X_0^{(i)}=u\right]
\end{align*}
holds. 
We use the Markov property in the third equality.
Since 
$$
\Pr\left[X_{s(i)}^{(i)}=v,\tau^{(i)}_{v_k}>s(i)\middle|X_0^{(i)}=u\right]\leq\Pr\left[X_{s(i)}^{(i)}=v\middle|X_0^{(i)}=u\right]= ((P^{(i)})^{s(i)})_{u,v},
$$ we have
\begin{align}
\Pr\left[\tau^{(i)}_{v_k}>\func(i)\middle|X_0^{(i)}=u\right]
&=\sum_{v\in V^{(i)}}\Pr\left[\tau^{(i)}_{v_k}>\func(i),X_{s(i)}^{(i)}=v\middle|X_0^{(i)}=u\right]\nonumber \\
&\leq \sum_{v\in V^{(i)}}\left((P^{(i)})^{s(i)}\right)_{u,v}\Pr\left[\tau^{(i)}_{v_k}>\func(i)-s(i)\middle|X_0^{(i)}=v\right]
\label{eq:splithit}
\end{align}
for any $u\in V^{(i)}$. 
Combining \cref{lem:unvisit_hit,eq:splithit}, we obtain the claim. 
\end{proof}
\begin{proof}[Proof of \cref{thm:tmix<<thit_gen}]
If $P^{(i)}$ is reversible, for any $i\in [n]$ and $u,v\in V^{(i)}$, some transition matrix $\hat{P}^{(i)}\in [0,1]^{V^{(i)}\times V^{(i)}}$ exists such that
\begin{align}
\left((P^{(i)})^{2\tmix(i)}\right)_{u,v}=\frac{1}{4}\pi^{(i)}(v)+\frac{3}{4}(\hat{P}^{(i)})_{u,v}
\label{eq:separation}
\end{align}
holds (See e.g., p.338 of \cite{LP17}).
Hence it holds for any $u\in V^{(i)}$ that
\begin{align}
\lefteqn{\sum_{v\in [i]}\left((P^{(i)})^{2\tmix(i)}\right)_{u,v}\Pr\left[\tau_{v_k}^{(i)}>\func(i)-2\tmix(i)\middle|X_0^{(i)}=v\right]}\nonumber\\
&= \frac{1}{4}\sum_{v\in [i]}\pi^{(i)}(v)\Pr\left[\tau_{v_k}^{(i)}>\func(i)-2\tmix(i)\middle|X_0^{(i)}=v\right]+\frac{3}{4}\sum_{v\in [i]}(\hat{P}^{(i)})_{u,v}\Pr\left[\tau_{v_k}^{(i)}>\func(i)-2\tmix(i)\middle|X_0^{(i)}=v\right]\nonumber\\
&\leq \frac{1}{4}\exp\left(-\frac{\func(i)-2\tmix(i)}{\thit(i)}\right)+\frac{3}{4}
\leq \frac{1}{4}\exp\left(-\frac{1}{\Den}\right)+\frac{3}{4}.
\label{eq:mixhit_split}
\end{align}
We use \cref{lem:multihit} in the first inequality.
Now, for a positive integer $L$, consider a random variable $X\sim Bin(L,1/4)$.
Here, $Bin(L,1/4)$ is the binomial distribution with parameters $L$ and $1/4$.
Then, it is straightforward to see that
\begin{align}
\left(\frac{1}{4}\exp\left(-\frac{1}{\Den}\right)+\frac{3}{4}\right)^L
&=\sum_{i=0}^L\left(\begin{array}{c}L\\i\end{array}\right)\left(\frac{1}{4}\exp\left(-\frac{1}{\Den}\right)\right)^i\left(\frac{3}{4}\right)^{L-i}
=\sum_{i=0}^L\exp\left(-\frac{i}{\Den}\right)\Pr\left[X=i\right]\nonumber\\
&\leq \sum_{i=0}^{\lfloor L/8\rfloor}\exp\left(-\frac{i}{\Den}\right)\Pr\left[X=i\right]+
\sum_{i=\lceil L/8\rceil}^{L}\exp\left(-\frac{i}{\Den}\right)\Pr\left[X=i\right] \nonumber \\
&\leq \Pr\left[X\leq \frac{L}{8}\right] +\exp\left(-\frac{L}{8\Den}\right) \leq \exp\left(-\frac{L}{32}\right)+\exp\left(-\frac{L}{8\Den}\right).
\label{eq:binomial}
\end{align}
The last inequality follows since 
\begin{align*}
\Pr\left[X\leq \frac{L}{8}\right]
=\Pr\left[X\leq \frac{\E[X]}{2}\right]
\leq \exp\left(-\frac{\E[X]}{8}\right)=\exp\left(-\frac{L}{32}\right)
\end{align*}
holds from the Chernoff inequality \cref{lem:Chernoff}.
Thus combining \cref{lem:unvisit_mix_hit,eq:mixhit_split,eq:binomial}, we obtain
\begin{align*}
\E[U]
&\leq \sum_{k=1}^n\left(\frac{1}{4}\exp\left(-\frac{1}{\Den}\right)+\frac{3}{4}\right)^{n-k+1}\leq \sum_{k=1}^n\left(\exp\left(-\frac{n-k+1}{32}\right)+\exp\left(-\frac{n-k+1}{8\Den}\right)\right)\\
&=\sum_{k=1}^n\exp\left(-\frac{k}{32}\right)+\sum_{k=1}^n\exp\left(-\frac{k}{8\Den}\right) \leq 32+8\Den.
\end{align*}
\end{proof}
\paragraph{Example: Degree restricted expander graph.}
For a graph $G=(V,E)$, let $d_{\textrm{ave}}(G)$ and $d_{\textrm{min}}(G)$ denote the average and the minimum degree of $G$,  respectively.
Suppose that $P$ is the transition matrix of the lazy  simple random walk on $G$ and let $\lambda_2(P)$ denote the second largest eigenvalue of $P$. 
We call a graph $G$ \emph{degree restricted expander graph} if both $\frac{d_{\textrm{ave}}(G)}{d_{\min}(G)}$ and
$\frac{1}{1-\lambda_2(P)}$ are upper bounded by some positive constant.
For any degree restricted expander graph, we have $\thit(P)=\Order(|V|)$ and $\tmix(P)=\Order(\log |V|)$ (See \cref{lem:thit_upper_lower} in \cref{app:tools} and Theorem 12.4 in \cite{LP17}).
Thus \cref{thm:tmix<<thit} implies the following.

\begin{corollary}
\label{cor:expander}
Suppose that $G^{(i)}$ is a degree restricted expander graph and $P^{(i)}$ is the transition matrix of the lazy simple random walk on $G^{(i)}$ in $R=(\func, (G^{(i)})_{i=1}^{\infty}), (P^{(i)})_{i=1}^{\infty})$.
Let $\gamma \in [0,1]$ and $C>0$ be arbitrary constants. 
Then two positive constants $K_1, K_2$ satisfying the following exist: 
If $\func(i)\geq C K_1 i^{1-\gamma} + K_2\log i$ for all $i\in [n]$, then $\E[U]\leq 8\frac{n^{\gamma}}{C}+32$.
\end{corollary}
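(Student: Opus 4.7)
This corollary is set up to follow directly from \cref{thm:tmix<<thit_gen} by a suitable choice of the parameter $\Den$. Since $G^{(i)}$ is a degree restricted expander and $P^{(i)}$ is the lazy simple random walk on it, the bounds cited immediately before the statement (\cref{lem:thit_upper_lower} in \cref{app:tools} and Theorem 12.4 of \cite{LP17}) produce positive constants $K_1',K_2'$, depending only on the expander parameters, such that
\[
\thit(i) \leq K_1'\, i \quad\text{and}\quad \tmix(i) \leq K_2' \log i
\]
for every $i\geq 2$. My plan is to apply \cref{thm:tmix<<thit_gen} with $\Den \defeq n^{\gamma}/C$, which immediately delivers the target bound $\E[U] \leq 8\Den + 32 = 8 n^{\gamma}/C + 32$ as soon as the hypothesis $\func(i) \geq \thit(i)/\Den + 2\tmix(i)$ is verified for all $i\in[n]$.

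The verification reduces to one elementary inequality. Using $i\leq n$ and $\gamma\in[0,1]$ I have $i/n^{\gamma} = i\cdot n^{-\gamma} \leq i \cdot i^{-\gamma} = i^{1-\gamma}$, and hence
\[
\frac{\thit(i)}{\Den} \;\leq\; \frac{C K_1'\, i}{n^{\gamma}} \;\leq\; C K_1'\, i^{1-\gamma}, \qquad 2\tmix(i) \;\leq\; 2 K_2' \log i.
\]
Setting $K_1 \defeq K_1'$ and $K_2 \defeq 2K_2'$, the assumed inequality $\func(i) \geq C K_1\, i^{1-\gamma} + K_2 \log i$ dominates the sum of these two upper bounds, so the hypothesis of \cref{thm:tmix<<thit_gen} holds and the claimed bound on $\E[U]$ follows.

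I do not expect any real obstacle here: the only nontrivial move is recognizing that $\Den = n^{\gamma}/C$ is the right choice and that the inequality $i\leq n$ lets one convert a condition involving $n$ (through $\Den$) into an $n$-free bound $C K_1\, i^{1-\gamma} + K_2\log i$ on $\func(i)$. This is exactly the trade-off already carried out in the excerpt to derive \cref{thm:tmix<<thit} from \cref{thm:tmix<<thit_gen}, specialized here to the degree-restricted expander estimates $\thit(i)=\Order(i)$ and $\tmix(i)=\Order(\log i)$.
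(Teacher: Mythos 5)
Your proof is correct and takes the same route as the paper: apply \cref{thm:tmix<<thit_gen} with $\Den=n^\gamma/C$, using the expander estimates $\thit(i)\leq K_1' i$ and $\tmix(i)\leq K_2'\log i$ together with $i\leq n$ to convert the $n$-dependent hypothesis $\func(i)\geq \thit(i)/\Den+2\tmix(i)$ into the $n$-free form $\func(i)\geq CK_1 i^{1-\gamma}+K_2\log i$. You are in fact slightly more careful than the paper's one-line argument in taking $K_2=2K_2'$ (rather than implicitly reusing the constant from the bound $\tmix(i)\leq K_2'\log i$), which absorbs the factor $2$ in front of $\tmix(i)$ cleanly.
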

\begin{proof}
Since there exist some positive constants $K_1, K_2$ satisfying $\thit(i)\leq K_1 i$ and $\tmix\leq K_2\log i$, we obtain the claim from \cref{thm:tmix<<thit_gen}.
\end{proof}

\subsection{Upper bounds for simple or symmetric random walks}
\label{sec:const}
This section is devoted to prove \cref{thm:upper_moderately}, which is a generalized version of \cref{thm:symmetry,thm:simplelazyRW}.

\begin{theorem}
\label{thm:upper_moderately}
Suppose that $P^{(i)}$ is reversible and lazy in $R=(\func, (G^{(i)})_{i=1}^{\infty}), (P^{(i)})_{i=1}^{\infty})$.
Let $r_i=\max_{v\in V^{(i-1)}}\frac{\pi^{(i-1)}(v)}{\pi^{(i)}(v)}$ for $1<i\leq n$.
Let $\Den$ be an arbitrary number.
If $\func(i)\geq  \left(\frac{1}{\Den}+\frac{i(r_i-1)+1}{2i}\right)\thit^{(i)}$ for all $i$, then $\E[U]\leq \Den\sqrt{\max_{1<i\leq n}i(r_i-1)+1}$.
\end{theorem}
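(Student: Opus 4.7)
The plan is to adapt the Cauchy--Schwarz ($L^2$) argument used for Theorem~\ref{thm:tmix<<thit_gen} so that it accommodates a growing graph whose successive stationaries $\pi^{(i-1)}$ and $\pi^{(i)}$ can differ by the multiplicative factor $r_i$. The basic idea is to split each round of length $\func(i)$ into a short burn-in (whose job is to absorb this density change) plus a main hitting phase of length at least $\thit(i)/\Den$. Set $a_i \defeq i(r_i-1)+1$ and choose burn-in length $s(i) \defeq \lceil \frac{a_i}{2i}\thit(i)\rceil$; by hypothesis we have $\func(i)-s(i) \geq \thit(i)/\Den$.

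Applying Lemma~\ref{lem:unvisit_mix_hit} with this $s(i)$, the task reduces to bounding, for each $u \in V^{(i)}$,
\[
\sum_v ((P^{(i)})^{s(i)})_{u,v}\,\Pr_v\!\bigl[\tau_{v_k}^{(i)} > \func(i)-s(i)\bigr].
\]
Applying Cauchy--Schwarz in $L^2(\pi^{(i)})$, and using the reversibility identity $((P^{(i)})^{s(i)})_{u,v}/\pi^{(i)}(v) = ((P^{(i)})^{s(i)})_{v,u}/\pi^{(i)}(u)$, this sum is at most
\[
\sqrt{\tfrac{(P^{(i)})^{2s(i)}(u,u)}{\pi^{(i)}(u)}}\cdot\sqrt{\Pr_{\pi^{(i)}}\!\bigl[\tau_{v_k}^{(i)}>\func(i)-s(i)\bigr]}.
\]
The stationary-start hitting-time estimate $\Pr_{\pi^{(i)}}[\tau_v > t] \leq \exp(-t/\thit(i))$ (Lemma~\ref{lem:multihit}, as used in the proof of Theorem~\ref{thm:tmix<<thit_gen}) bounds the second square root by $\exp(-1/(2\Den))$.

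The crux is to show that the first square root is at most $\sqrt{a_i}$. For reversible lazy chains the relaxation time satisfies $\trel \leq \thit$, so $\lambda_2(P^{(i)}) \leq 1-1/\thit(i)$; iterating the resulting $L^2$ contraction for $s(i)$ steps shrinks the $\chi^2$-excess of the walker's distribution at a rate $\exp(-2s(i)/\thit(i)) = \exp(-a_i/i)$. The initial $\chi^2$-density, taken with respect to $\pi^{(i)}$ rather than $\pi^{(i-1)}$, is amplified by a factor at most $r_i$ (this is exactly the algebraic role of $r_i$), and a short induction on $i$ propagates the bound across rounds. Assembling everything, each round contributes a factor of $\sqrt{a_i}\,\exp(-1/(2\Den))$ to the product in Lemma~\ref{lem:unvisit_mix_hit}, and
\[
\E[U] \leq \sum_{k=2}^n\prod_{i=k}^n\sqrt{a_i}\,e^{-1/(2\Den)}\leq \sqrt{\max_{1<i\leq n}a_i}\sum_{k=1}^n e^{-k/(2\Den)}\leq \Den\sqrt{\max_{1<i\leq n}a_i}
\]
by the geometric-series bound that underlies Lemma~\ref{lem:Kn}.

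The hard part will be the density estimate $(P^{(i)})^{2s(i)}(u,u)/\pi^{(i)}(u)\leq a_i$. A point-mass $\chi^2$ is initially $1/\pi^{(i)}(u)$ which can be as large as $|V^{(i)}|$; the pure spectral contraction shrinks this by only $\exp(-a_i/i)$, which is essentially useless when $a_i\ll i$. So the bound cannot come from spectral contraction alone from a worst-case point mass: it must use that the distribution \emph{entering} round $i$ is really $\mu_i = \mu_{i-1}(P^{(i-1)})^{\func(i-1)}$, whose density w.r.t.\ $\pi^{(i-1)}$ is already controlled by the previous round, and that the amplification incurred in changing reference from $\pi^{(i-1)}$ to $\pi^{(i)}$ is exactly $r_i$. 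Carrying the inductive density bound without loss across rounds, and balancing the Cauchy--Schwarz so that the constant $\tfrac12$ in the exponent of the burn-in matches the $\tfrac12$ appearing in the stationary hitting decay, is the delicate technical step.
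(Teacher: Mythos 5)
Your high-level instinct—that $r_i$ measures the $\chi^2$-density amplification across a round boundary and that the hypothesis must be spent half on absorbing that amplification and half on the hitting decay—matches what the paper actually does. But your plan to run this through \cref{lem:unvisit_mix_hit} with a per-round burn-in has two defects that make it unworkable, and the first one is exactly the obstruction you flag in your last paragraph and cannot be talked past.

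First, \cref{lem:unvisit_mix_hit} contains a $\max_{u\in V^{(i)}}$ in each factor of the product, so any per-round estimate must hold uniformly in the starting vertex $u$. Your crucial density bound $(P^{(i)})^{2s(i)}(u,u)/\pi^{(i)}(u)\leq a_i$ is not uniform in $u$: a worst-case point mass has $\chi^2$-excess of order $1/\pi^{(i)}(u)$, which can be of order $i$, and the burn-in of length $s(i)\approx a_i\thit(i)/(2i)$ contracts it by only $\exp(-a_i/i)$, leaving something of order $i\cdot e^{-a_i/i}\gg a_i$ when $a_i\ll i$. You correctly observe that the bound ``must use that the distribution entering round $i$ is really $\mu_i=\mu_{i-1}(P^{(i-1)})^{\func(i-1)}$''—but once you have invoked \cref{lem:unvisit_mix_hit} the starting distribution has been discarded by the $\max_u$, and there is no way to re-import it. This is a dead end, not a ``delicate technical step.'' Second, even if the per-round density bound held, the resulting per-round factor $\sqrt{a_i}\,e^{-1/(2\Den)}$ compounds into $\prod_{i=k}^n\sqrt{a_i}$, which is generically exponentially large; the displayed inequality $\prod_{i=k}^n\sqrt{a_i}\,e^{-1/(2\Den)}\leq\sqrt{\max a_i}\,e^{-(n-k+1)/(2\Den)}$ is false. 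The target bound has a \emph{single} factor of $\sqrt{\max a_i}$, not one per round, so the factor has to enter the argument exactly once per $k$, not once per round.

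The paper avoids both problems by not using \cref{lem:unvisit_mix_hit} at all for this theorem. Starting from \cref{eq:upperEU_gen_prev} it writes $\E[U]=\sum_{k}\sum_v \nu^{(k-1)}_{\func(k-1)}(v)\,\mu^{(k-1)}_{v_k}(v)$ and applies Cauchy--Schwarz in $L^2(\pi^{(k-1)})$ \emph{once per $k$}, giving $\E[U]\leq\sum_k \big(1+\|\nu^{(k-1)}_{\func(k-1)}/\pi^{(k-1)}-\mathbbm{1}\|_{2,\pi}^2\big)^{1/2}\|\mu^{(k-1)}_{v_k}\|_{2,\pi}$. The $\nu$-factor is the walker's actual distribution entering round $k$ (no $\max_u$), so the inductive $\chi^2$-control across rounds you want (\cref{lem:increasing_l2discrepancy,lem:L2_upper_gen}) is available; it yields the single $\sqrt{\max_i i(r_i-1)+1}=\sqrt{\max a_i}$. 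The $\mu$-factor is a function, not a probability of a fixed event, and is bounded spectrally via the killed operator $P^{(i)}_{\overline{v_k}}$ (\cref{lem:munorm}, \cref{lem:upper_eigen}), which gives $\prod_{i\geq k}\sqrt{r_i}(1-1/\thit(i))^{\func(i)}$; the $\sqrt{r_i}$ factors are absorbed into the exponent using $\func(i)-\frac{r_i-1}{2}\thit(i)\geq\thit(i)/\Den$, and \cref{lem:Kn} then closes the geometric sum. So the mechanism separating ``density control'' from ``hitting control'' is not a temporal split of each round into burn-in and main phase, but a functional split of a single bilinear form.
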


\begin{proof}[Proof of \cref{thm:simplelazyRW}]
Let $d^{(i)}_v$ denote the degree of a vertex $v\in V^{(i)}$ at round $i$.
Then, for all $v\in V^{(i)}$, 
\begin{align*}
\frac{\pi^{(i-1)}(v)}{\pi^{(i)}(v)}=\frac{d^{(i-1)}_v}{2|E^{(i-1)}|}\frac{2|E^{(i)}|}{d^{(i)}_v}\leq \frac{|E^{(i)}|}{|E^{(i-1)}|}
\end{align*}
Note that $d^{(i-1)}_v\leq d^{(i)}_v$ holds from our assumption.
Combining the assumptions on $\func(i)$ and $E^{(i)}$, we have $\func(i)\geq \frac{\thit(i)}{i^\gamma/C}+\frac{L+1}{2i}\thit(i)\geq \frac{\thit(i)}{n^\gamma/C}+\frac{L+1}{2i}\thit(i)$.
 Thus we obtain the claim by taking $\Den=n^\gamma/C$ in \cref{thm:upper_moderately}.
\end{proof}
\begin{proof}[Proof of \cref{thm:symmetry}]
Since $P^{(i)}$ is symmetric, $r_i=\frac{i}{i-1}\leq 1+\frac{2}{i}$ for all $i>1$.
From the assumption of \cref{thm:symmetry}, $\func(i)\geq \frac{\thit(i)}{i^\gamma/C}+\frac{2\thit(i)}{i}\geq \frac{\thit(i)}{n^\gamma/C}+\frac{\thit(i)(2+1)}{2i}$ for all $1<i\leq n$. Thus we obtain the claim by taking $\Den=n^\gamma/C$  in \cref{thm:upper_moderately}.
\end{proof}
To show \cref{thm:upper_moderately}, we set the following notations.
For two vectors $f,g\in \mathbbm{R}$ and a probability vector $\pi\in (0,1]^V$, 
let $\inp<f,g>_\pi\defeq \sum_{v\in V}\pi(v)f(v)g(v)$. 
Then, the $\ell_2(\pi)$-norm of $f$ is defined by $\Nor<f>_{2,\pi}\defeq \sqrt{\inp<f,f>_\pi}=\sqrt{\sum_{v\in V}\pi(v)f(v)^2}$.
For two vectors $f,g\in \mathbbm{R}^V$ where $g(v)\neq 0$ holds for all $v\in V$, define $\frac{f}{g}\in \mathbbm{R}^V$ by $\frac{f}{g}(v)=\frac{f(v)}{g(v)}$.
Note that from these definitions, for any  probability vector $\xi\in [0,1]^V$,  $\Nor<\frac{\xi}{\pi}-\mathbbm{1}^{(|V|)}>_{2,\pi}^2=\Nor<\frac{\xi}{\pi}>_{2,\pi}^2-1$ holds.
Here, $\mathbbm{1}^{(n)}$ denotes the $n$-dimensional vector where all elements are equal to one. 
For a matrix $M\in \mathbbm{R}^{V\times V}$ let $\lambda_j(M)$ denote the $j$-th largest (in absolute value) eigenvalue of $M$. 

For any round $1<\ell\leq n$ and $0\leq t\leq \func(\ell)$, define a probability vector $\nu^{(\ell)}_t\in [0,1]^{V^{(\ell)}}$ where
\begin{align}
    \nu^{(\ell)}_t(v)&= \Pr[X_t^{(\ell)}=v] \hspace{1em} (\forall v\in V^{(\ell)}).
    \label{def:nu}
\end{align}
Furthermore, for any rounds $k, \ell$ satisfying $k-1\leq \ell \leq n-1$, define $\mu^{(\ell)}_{v_k}\in [0,1]^{V^{(\ell)}}$ by
\begin{align}
\mu_{v_k}^{(\ell)}(v)
= \Pr\left[\bigwedge_{i=\ell+1}^n\left\{\tau_{v_k}^{(i)}>\func(i)\right\}\middle| X_{\func(\ell)}^{(\ell)}=v\right] \hspace{1em} (\forall v\in V^{(\ell)})
\label{def:mu}
\end{align}
and $\mu^{(n)}_{v_k}\defeq \mathbbm{1}^{(n)}$.
%
Then, combining the Cauchy-Schwarz inequality, \cref{eq:upperEU_gen_prev,def:nu,def:mu}, we have
\begin{align}
    \E[U]
    &=\sum_{k=2}^n\sum_{v\in V^{(k-1)}}
    \nu^{(k-1)}_{\func(k-1)}(v)\mu^{(k-1)}_{v_k}(v)
    \leq \sum_{k=2}^{n}\sqrt{\sum_{v\in V^{(k-1)}}\frac{\nu^{(k-1)}_{\func(k-1)}(v)^2}{\pi^{(k-1)}(v)}\sum_{v\in V^{(k-1)}}\pi^{(k-1)}(v)\mu^{(k-1)}_{v_{k}}(v)^2} \nonumber \\
    &=\sum_{k=2}^{n}\Nor<\frac{\nu^{(k-1)}_{\func(k-1)}}{\pi^{(k-1)}}>_{2,\pi^{(k-1)}}\Nor<\mu^{(k-1)}_{v_{k}}>_{2,\pi^{(k-1)}}
    =\sum_{k=2}^{n}\sqrt{1+\Nor<\frac{\nu^{(k-1)}_{\func(k-1)}}{\pi^{(k-1)}}-\mathbbm{1}^{(k-1)}>_{2,\pi^{(k-1)}}^2}\Nor<\mu^{(k-1)}_{v_{k}}>_{2,\pi^{(k-1)}}.
    \label{eq:EU_bound_munuNorms}
\end{align}
In the rest of this section, 
we show the following bounds of $\Nor<\frac{\nu^{(k)}_{\func(k)}}{\pi^{(k)}}-\mathbbm{1}^{(k)}>_{2,\pi^{(k)}}$ and $\Nor<\mu^{(k-1)}_{v_{k}}>_{2,\pi^{(k-1)}}$, from which we immediately derive \cref{thm:upper_moderately}.

\begin{lemma}
\label{lem:L2_upper_gen}
Suppose that $P^{(i)}$ is reversible and lazy in $R=(\func, (G^{(i)})_{i=1}^{\infty}, (P^{(i)})_{i=1}^{\infty})$.
Let $r_i=\max_{v\in V^{(i-1)}}\frac{\pi^{(i-1)}(v)}{\pi^{(i)}(v)}$ for $1<i\leq n$.
If $\func(i)\geq \frac{i(r_i-1)+1}{2i(1-\lambda_2(P^{(i)}))}$, then $\Nor<\frac{\nu^{(k)}_{\func(k)}}{\pi^{(k)}}-\mathbbm{1}^{(k)}>_{2,\pi^{(k)}}^2< \max_{1<i\leq n}i(r_i-1)$ for all $1\leq k\leq n$.
\end{lemma}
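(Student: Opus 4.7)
\textbf{Proof plan for Lemma \ref{lem:L2_upper_gen}.} The plan is to set $a_k \defeq \Nor<\nu^{(k)}_{\func(k)}/\pi^{(k)} - \mathbbm{1}^{(k)}>_{2,\pi^{(k)}}^2$, derive the two-step recursion
\[
    a_k \leq \lambda_2(P^{(k)})^{2\func(k)}\bigl[(r_k - 1) + r_k\, a_{k-1}\bigr],
\]
and then induct on $k$ with $M \defeq \max_{1 < i \leq n} i(r_i - 1)$ to show $a_k < M$ for every $1 \leq k \leq n$. The recursion captures the two phases of round $k$: an inter-round jump from the final distribution $\nu^{(k-1)}_{\func(k-1)}$ on $V^{(k-1)}$ to the initial distribution $\nu^{(k)}_0$ on $V^{(k)}$, followed by $\func(k)$ steps of the lazy reversible chain $P^{(k)}$.

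For the inter-round step, since $X^{(k)}_0 = X^{(k-1)}_{\func(k-1)}$ lives in $V^{(k-1)}$, the vector $\nu^{(k)}_0$ agrees with $\nu^{(k-1)}_{\func(k-1)}$ on $V^{(k-1)}$ and vanishes at $v_k$. The identity $\Nor<\xi/\pi - \mathbbm{1}>_{2,\pi}^2 = \Nor<\xi/\pi>_{2,\pi}^2 - 1$ (valid for any probability vector $\xi$), combined with the pointwise bound $1/\pi^{(k)}(v) \leq r_k/\pi^{(k-1)}(v)$ on $V^{(k-1)}$ (which is precisely the definition of $r_k$), yields
\[
    \Nor<\nu^{(k)}_0/\pi^{(k)} - \mathbbm{1}^{(k)}>_{2,\pi^{(k)}}^2 \leq r_k(a_{k-1} + 1) - 1 = (r_k - 1) + r_k\, a_{k-1}.
\]
For the within-round step, reversibility and laziness make $P^{(k)}$ self-adjoint on $L^2(\pi^{(k)})$ with eigenvalues in $[0,1]$; since $\nu^{(k)}_0/\pi^{(k)} - \mathbbm{1}^{(k)}$ is orthogonal to constants in $L^2(\pi^{(k)})$, standard spectral decomposition contracts this norm by $\lambda_2(P^{(k)})^{\func(k)}$, yielding the recursion above after squaring. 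Using $\lambda_2 \leq e^{-(1-\lambda_2)}$ together with the hypothesis gives the key estimate $\lambda_2(P^{(k)})^{2\func(k)} \leq e^{-(r_k - 1 + 1/k)}$.

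The induction is then routine. Since $V^{(1)} = \{v_1\}$, the base case is $a_1 = 0$, and $M > 0$ because the connectivity of $G^{(k)}$ forces $\pi^{(k)}(v_k) > 0$ and hence $r_k > 1$ for at least one (in fact every) $k \geq 2$. Assuming $a_{k-1} \leq M$, the recursion yields
\[
    a_k \leq e^{-(r_k - 1 + 1/k)}\bigl[(M+1)(r_k - 1) + M\bigr],
\]
and the claim $a_k < M$ is equivalent to $(M+1)(r_k - 1) + M \leq M\, e^{r_k - 1 + 1/k}$. Applying $e^y \geq 1 + y$ on the right-hand side reduces this further to $r_k - 1 \leq M/k$, i.e., $k(r_k - 1) \leq M$, which is the defining property of $M$; strictness $e^y > 1 + y$ for $y > 0$ (here $y \geq 1/k > 0$) promotes the bound to the strict inequality in the statement. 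The main subtlety I anticipate is not any single analytic estimate but the inter-round bookkeeping: the factor $r_k - 1$ is the precise ``cost'' of reweighting $V^{(k-1)}$ from $\pi^{(k-1)}$ to $\pi^{(k)}$, and the hypothesis on $\func(k)$ is calibrated to absorb both this cost and a $1/k$ slack, leaving exactly the margin $M/k$ that the induction requires.
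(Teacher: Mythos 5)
Your proof is correct, and it shares the core structure of the paper's argument — the same two-step recursion
\[
a_k \leq \lambda_2(P^{(k)})^{2\func(k)}\bigl[(r_k-1) + r_k\, a_{k-1}\bigr],
\]
derived via exactly the same inter-round reweighting bound and within-round spectral contraction — but the final step genuinely differs. The paper (in Lemma~\ref{lem:increasing_l2discrepancy}) unrolls the recursion into the closed-form sum $\sum_{i=2}^k\bigl(\prod_{j=i}^k r_j\lambda_2(P^{(j)})^{2\func(j)}\bigr)(1-1/r_i)$, then shows $r_j\lambda_2(P^{(j)})^{2\func(j)}\leq j/(j+1)$ so that the inner product telescopes to $i/(k+1)$, giving the sharper bound $\frac{k-1}{k+1}\max_{1<i\leq n} i(r_i-1)$. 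You instead run a direct induction on $k$ with the invariant $a_k < M$, and close it with $\lambda_2^{2\func(k)} \leq e^{-(r_k-1+1/k)}$ together with $e^y \geq 1+y$, which reduces the inductive step to precisely the defining inequality $k(r_k-1)\leq M$. Your route is a bit more compact (one induction instead of unroll-then-telescope), while the paper's route yields the mildly stronger quantitative factor $\frac{k-1}{k+1}$; both establish the stated strict inequality. One small cosmetic remark: you justify strictness via $e^y > 1+y$ and $M>0$; it is worth spelling out that $M>0$ and $r_k>1$ follow because $V^{(k-1)}\subsetneq V^{(k)}$ forces $\sum_{v\in V^{(k-1)}}\pi^{(k)}(v) = 1 - \pi^{(k)}(v_k) < 1$, so some $v$ satisfies $\pi^{(k-1)}(v)>\pi^{(k)}(v)$ — but you did gesture at this, so there is no gap.
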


\begin{lemma} 
\label{thm:upper_specific}
Suppose that $P^{(i)}$ is reversible and lazy in $R=(\func, (G^{(i)})_{i=1}^{\infty}, (P^{(i)})_{i=1}^{\infty})$.
For $1< i\leq n$, let $r_i\defeq \max_{v\in V^{(i-1)}}\frac{\pi^{(i-1)}(v)}{\pi^{(i)}(v)}$.
Let $\Den$ be an arbitrary positive number. 
If $\func(i)\geq \left(\frac{1}{\Den}+\frac{r_i-1}{2}\right)\thit(i)$ for all $1< i\leq n$, then
$
    \sum_{k=2}^n\Nor<\mu^{(k-1)}_{v_k}>_{2,\pi^{(k-1)}}
    \leq \Den
$
\end{lemma}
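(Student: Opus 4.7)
The plan is to set up a backward recursion for $\mu^{(\ell)}_{v_k}$ over $\ell = n, n-1, \ldots, k-1$ with $v_k$ fixed, combine an $L^2$ contraction of the chain killed at $v_k$ with the change-of-measure factor $r_\ell$ at each step, and sum the resulting geometric series over $k$.

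First, for each $\ell \geq k$, let $\tilde{P}^{(\ell)}$ denote the substochastic matrix on $V^{(\ell)} \setminus \{v_k\}$ obtained by restricting $P^{(\ell)}$, i.e.\ killing the walker upon reaching $v_k$. Conditioning on $X^{(\ell)}_{\func(\ell)}$ and using the Markov property together with $X^{(\ell+1)}_0 = X^{(\ell)}_{\func(\ell)}$ yields, for every $v \in V^{(\ell-1)}$ (noting $\mu^{(\ell-1)}_{v_k}(v_k) = 0$ whenever $v_k \in V^{(\ell-1)}$),
$$\mu^{(\ell-1)}_{v_k}(v) \;=\; \sum_{u \in V^{(\ell)} \setminus \{v_k\}} \bigl((\tilde{P}^{(\ell)})^{\func(\ell)}\bigr)_{v, u}\, \mu^{(\ell)}_{v_k}(u),$$
so $\mu^{(\ell-1)}_{v_k}$ is the restriction to $V^{(\ell-1)}$ of $(\tilde{P}^{(\ell)})^{\func(\ell)}$ applied to $\mu^{(\ell)}_{v_k}$.

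Because $P^{(\ell)}$ is lazy and reversible, $\tilde{P}^{(\ell)}$ is self-adjoint on $L^2(\pi^{(\ell)})$ (restricted to $V^{(\ell)} \setminus \{v_k\}$) with non-negative eigenvalues, and the standard spectral-radius/hitting-time estimate gives $\rho(\tilde{P}^{(\ell)}) \leq 1 - 1/\thit(\ell) \leq e^{-1/\thit(\ell)}$. Combining this contraction with the pointwise bound $\pi^{(\ell-1)}(v) \leq r_\ell \pi^{(\ell)}(v)$ on $V^{(\ell-1)}$ (and extending the domain to $V^{(\ell)} \setminus \{v_k\}$ by adding a non-negative term) yields the one-step estimate
$$\Nor<\mu^{(\ell-1)}_{v_k}>_{2,\pi^{(\ell-1)}}^2 \;\leq\; r_\ell\, e^{-2\func(\ell)/\thit(\ell)}\, \Nor<\mu^{(\ell)}_{v_k}>_{2,\pi^{(\ell)}}^2.$$
Iterating from $\ell = n$ down to $\ell = k$, with the base case $\mu^{(n)}_{v_k} \equiv \mathbbm{1}$ giving $\Nor<\mu^{(n)}_{v_k}>_{2,\pi^{(n)}} = 1$, produces
$\Nor<\mu^{(k-1)}_{v_k}>_{2,\pi^{(k-1)}} \leq \prod_{\ell = k}^{n} \sqrt{r_\ell}\, e^{-\func(\ell)/\thit(\ell)}$.

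Using $\sqrt{r_\ell} \leq e^{(r_\ell - 1)/2}$ and the hypothesis $\func(\ell)/\thit(\ell) \geq 1/\Den + (r_\ell - 1)/2$, each factor of the product is at most $e^{-1/\Den}$, so $\Nor<\mu^{(k-1)}_{v_k}>_{2,\pi^{(k-1)}} \leq e^{-(n-k+1)/\Den}$. Summing and using $e^{1/\Den} - 1 \geq 1/\Den$ gives $\sum_{k=2}^{n} \Nor<\mu^{(k-1)}_{v_k}>_{2,\pi^{(k-1)}} \leq \sum_{j=1}^{\infty} e^{-j/\Den} = 1/(e^{1/\Den} - 1) \leq \Den$, as required. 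The main subtle ingredient is the spectral-radius bound $\rho(\tilde{P}^{(\ell)}) \leq 1 - 1/\thit(\ell)$ for the lazy reversible chain with $v_k$ killed: the precise constant $1$ in the numerator is what makes the condition on $\func(\ell)$ match the statement (any weaker constant would inflate the required ratio $\func(\ell)/\thit(\ell)$). The book-keeping across the growing vertex sets $V^{(\ell-1)} \subset V^{(\ell)}$ in the recursion is otherwise routine.
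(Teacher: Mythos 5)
Your proof is correct and follows essentially the same route as the paper: the same backward recursion $\mu^{(\ell-1)}_{v_k}=(P^{(\ell)}_{\overline{v_k}})^{\func(\ell)}\mu^{(\ell)}_{v_k}$, the same one-step $\ell_2(\pi)$ contraction combining the change-of-measure factor $r_\ell$ with the killed-chain spectral bound $\lambda_1(P^{(\ell)}_{\overline{v_k}})\leq 1-1/\thit(\ell)$ (the paper's \cref{lem:munorm,lem:upper_eigen,lem:opnorm}), and the same geometric-series summation at the end (the paper's \cref{lem:Kn}). You merely inline these intermediate lemmas and pass to the exponential bound $\mathrm{e}^{-x}$ a step earlier than the paper does; the ideas match one-for-one.
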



\begin{proof}[Proof of \cref{thm:upper_moderately}]
Suppose $\func(i)\geq \frac{\thit(i)}{\Den}+\frac{(i(r_i-1)+1)\thit(i)}{2i}$ for all $1<i\leq n$. Then, $\func(i)\geq \frac{i(r_i-1)+1}{2i(1-\lambda_2(P^{(i)})}$ from \cref{lem:thit_upper_lower}.
Furthermore, 
$\func(i)\geq \frac{\thit(i)}{\Den}+\frac{r_i-1}{2}\thit(i)$.
Thus applying \cref{thm:upper_specific,lem:L2_upper_gen} to \cref{eq:EU_bound_munuNorms}, 
\begin{align*}
    \E[U]\leq \sum_{k=2}^n\sqrt{\max_{1<i\leq n}i(r_i-1)+1}\Nor<\mu^{(k-1)}_{v_k}>_{2,\pi^{(k-1)}}\leq \Den\sqrt{\max_{1<i\leq n}i(r_i-1)+1}
\end{align*}
and we obtain the claim.
\end{proof}


%
%
\paragraph{Proof of \cref{lem:L2_upper_gen}}
%
First we show the following lemma.
This lemma gives a general upper bound of $\Nor<\frac{\nu^{(k)}_{\func(k)}}{\pi^{(k)}}-\mathbbm{1}^{(k)}>_{2,\pi^{(k)}}^2$ using $r_i$.
\begin{lemma}
\label{lem:increasing_l2discrepancy}
Suppose that $P^{(i)}$ is reversible and lazy in $R=(\func, (G^{(i)})_{i=1}^{\infty}, (P^{(i)})_{i=1}^{\infty})$.
Let $r_i=\max_{v\in V^{(i-1)}}\frac{\pi^{(i-1)}(v)}{\pi^{(i)}(v)}$  for $1<i\leq n$.
Then for any round $1\leq k\leq n$, 
\begin{align*}
\Nor<\frac{\nu^{(k)}_{\func(k)}}{\pi^{(k)}}-\mathbbm{1}^{(k)}>_{2,\pi^{(k)}}^2
\leq \sum_{i=2}^{k}\left(\prod_{j=i}^{k}r_j \lambda_2(P^{(j)})^{2\func(j)}\right)\left(1-\frac{1}{r_i}\right).
\end{align*}
\end{lemma}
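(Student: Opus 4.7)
The plan is to proceed by induction on $k$, establishing a one-round recursion for $A_k \defeq \Nor<\frac{\nu^{(k)}_{\func(k)}}{\pi^{(k)}}-\mathbbm{1}^{(k)}>_{2,\pi^{(k)}}^2$ that then unrolls to the stated sum. The two ingredients are (i) the standard $\ell_2(\pi^{(k)})$ contraction of a reversible lazy chain \emph{within} round $k$, and (ii) a bound on the cost of \emph{transitioning} between rounds, where the distribution on $V^{(k-1)}$ is re-interpreted as a distribution on $V^{(k)}$ by setting its mass at the newly attached vertex $v_k$ to zero.

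For the base case, $V^{(1)}=\{v_1\}$ forces $\nu^{(1)}_{\func(1)}=\pi^{(1)}$ and hence $A_1=0$, which matches the empty sum on the right-hand side. For the inductive step, first I would use the fact that, since $P^{(k)}$ is reversible and lazy (so self-adjoint on $\ell_2(\pi^{(k)})$ with nonnegative spectrum and top eigenvalue $1$ carried by $\mathbbm{1}^{(k)}$), the orthogonal complement to $\mathbbm{1}^{(k)}$ contracts by a factor of $\lambda_2(P^{(k)})$ per step. Applying this for $\func(k)$ steps starting from the round-initial distribution $\nu^{(k)}_0$ yields
\begin{equation*}
A_k \;\leq\; \lambda_2(P^{(k)})^{2\func(k)} \Nor<\frac{\nu^{(k)}_0}{\pi^{(k)}}-\mathbbm{1}^{(k)}>_{2,\pi^{(k)}}^2.
\end{equation*}

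Next, the between-rounds relation $\nu^{(k)}_0(v)=\nu^{(k-1)}_{\func(k-1)}(v)$ for $v\in V^{(k-1)}$ and $\nu^{(k)}_0(v_k)=0$, together with the identity $\Nor<g-\mathbbm{1}>_{2,\pi}^2=\Nor<g>_{2,\pi}^2-1$ valid for any probability density $g$ with respect to $\pi$, gives
\begin{equation*}
\Nor<\frac{\nu^{(k)}_0}{\pi^{(k)}}-\mathbbm{1}^{(k)}>_{2,\pi^{(k)}}^2
\;=\;\sum_{v\in V^{(k-1)}}\frac{\pi^{(k-1)}(v)}{\pi^{(k)}(v)}\cdot\frac{\nu^{(k-1)}_{\func(k-1)}(v)^2}{\pi^{(k-1)}(v)}\;-\;1
\;\leq\; r_k\bigl(A_{k-1}+1\bigr)-1,
\end{equation*}
where the inequality uses $\pi^{(k-1)}(v)/\pi^{(k)}(v)\leq r_k$ on $V^{(k-1)}$. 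Combining the two displays yields the recursion
\begin{equation*}
A_k \;\leq\; \lambda_2(P^{(k)})^{2\func(k)}\bigl(r_k A_{k-1}+(r_k-1)\bigr).
\end{equation*}

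Finally, I would invoke the inductive hypothesis on $A_{k-1}$, substitute into this recursion, and verify that the resulting expression equals $\sum_{i=2}^{k}\bigl(\prod_{j=i}^{k}r_j\lambda_2(P^{(j)})^{2\func(j)}\bigr)(1-1/r_i)$ by matching the contribution of each index $i$. The main obstacle is the between-rounds step: re-expressing the $\ell_2(\pi^{(k)})$-discrepancy of a distribution supported on the strictly smaller vertex set $V^{(k-1)}$ in terms of the $\ell_2(\pi^{(k-1)})$-discrepancy, where both the ``missing'' mass at $v_k$ and the rescaling of $\pi^{(k-1)}\to \pi^{(k)}$ (controlled by $r_k$) must be absorbed cleanly. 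Once this factor $r_k$ and additive term $r_k-1$ are isolated, the remainder of the argument is routine algebra.
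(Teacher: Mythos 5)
Your proposal is correct and follows essentially the same route as the paper: both establish the one-round recursion $A_k \leq \lambda_2(P^{(k)})^{2\func(k)}\bigl(r_k A_{k-1}+(r_k-1)\bigr)$ by combining the $\ell_2(\pi^{(k)})$-contraction within a round with the cross-round bound via $r_k$ (using that $\nu^{(k)}_0$ vanishes at $v_k$), and then unroll from $A_1=0$. The paper packages the contraction step as a cited lemma (its \cref{lem:Markov_variance}, after first passing to the density form via reversibility), but the recursion and its unrolling are the same as yours.
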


\begin{proof}[Proof of \cref{lem:L2_upper_gen}]
First we observe that $\log \left(r_j(\frac{j+1}{j})\right)=\log (1+(r_j-1))+\log(1+\frac{1}{j})\leq (r_j-1)+\frac{1}{j}$. 
Hence it holds that
\begin{align*}
    \lambda_2(P^{(j)})^{2\func(j)}\leq \left(1-\left(1-\lambda_2(P^{(j)})\right)\right)^{\frac{\log \left(r_j(\frac{j+1}{j})\right)}{1-\lambda_2(P^{(j)})}}\leq \frac{1}{r_j}\cdot \frac{j}{j+1}.
\end{align*}
Applying \cref{lem:increasing_l2discrepancy}, we obtain
\begin{align*}
    \sum_{i=2}^{k}\left(\prod_{j=i}^{k}r_j\lambda_2(P^{(j)})^{2\func(j)}\right)\left(1-\frac{1}{r_i}\right)
    &\leq \sum_{i=2}^{k} \left(\prod_{j=i}^{k} \frac{j}{j+1}\right)\frac{r_i-1}{r_i}
    \leq \sum_{i=2}^k\frac{i}{k+1}(r_i-1)\\
    &\leq \max_{1<i\leq n}i(r_i-1)\frac{k-1}{k+1}<\max_{1<i\leq n}i(r_i-1).
\end{align*}
\end{proof}

\begin{proof}[Proof of \cref{lem:increasing_l2discrepancy}]
To obtain the claim, we show the following recurrence inequality:
\begin{align}
    \Nor<\frac{\nu^{(\ell)}_{\func(\ell)}}{\pi^{(\ell)}}-\mathbbm{1}^{(\ell)}>_{2,\pi^{(\ell)}}^2
    \leq r_\ell \lambda_2(P^{(\ell)})^{2\func(\ell)}\Nor<\frac{\nu^{(\ell-1)}_{\func(\ell-1)}}{\pi^{(\ell-1)}}-\mathbbm{1}^{(\ell-1)}>_{2,\pi^{(\ell-1)}}^2 + (r_\ell-1)\lambda_2(P^{(\ell)})^{2\func(\ell)}.
    \label{eq:rucurrence_ineq_nu}
\end{align}
Write $x_\ell=\Nor<\frac{\nu^{(\ell)}_{\func(\ell)}}{\pi^{(\ell)}}-\mathbbm{1}^{(\ell)}>_{2,\pi^{(\ell)}}^2$, $c_\ell=r_\ell \lambda_2(P^{(\ell)})^{2\func(\ell)}$ and $d_\ell=(r_\ell-1)\lambda_2(P^{(\ell)})^{2\func(\ell)}$ for notational convenience.
If \cref{eq:rucurrence_ineq_nu} holds for any $\ell>1$, 
applying \cref{eq:rucurrence_ineq_nu} repeatedly yields
\begin{align*}
x_k &\leq c_k x_{k-1} +d_{k} \leq c_kc_{k-1}x_{k-2} + c_kd_{k-1} + d_k 
\leq \cdots \leq 
\left(\prod_{i=2}^kc_i\right)x_1+\sum_{i=2}^k\left(\prod_{j=i+1}^kc_j\right)d_i.
\end{align*}
Since $x_1=\Nor<\frac{\nu^{(1)}_{\func(1)}}{\pi^{(1)}}-\mathbbm{1}^{(1)}>_{2,\pi^{(1)}}^2=0$ from definition, we obtain the claim.

Now we proceeds to show \cref{eq:rucurrence_ineq_nu}.
From the reversibility of $P^{(\ell)}$, it is easy to see that, for all $v\in V^{(\ell)}$, 
\begin{align}
\left(\frac{\nu^{(\ell)}_t}{\pi^{(\ell)}}\right)(v)
=\frac{\sum_{u\in V^{(\ell)}}\nu_0^{(\ell)}(u)\left((P^{(\ell)})^t\right)_{u,v}}{\pi^{(\ell)}(v)}
=\sum_{u\in V^{(\ell)}}\frac{\nu_0^{(\ell)}(u)\left((P^{(\ell)})^t\right)_{v,u}}{\pi^{(\ell)}(u)}
=\left((P^{(\ell)})^t\frac{\nu^{(\ell)}_0}{\pi^{(\ell)}}\right)(v).
\label{eq:nuPell}
\end{align}
From \cref{eq:nuPell,lem:Markov_variance}, it holds that 
\begin{align}
\Nor<\frac{\nu^{(\ell)}_{\func(\ell)}}{\pi^{(\ell)}}-\mathbbm{1}^{(\ell)}>_{2,\pi^{(\ell)}}^2
\leq \lambda_2(P^{(\ell)})^{2\func(\ell)}\Nor<\frac{\nu^{(\ell)}_{0}}{\pi^{(\ell)}}-\mathbbm{1}^{(\ell)}>_{2,\pi^{(\ell)}}^2
=\lambda_2(P^{(\ell)})^{2\func(\ell)}\left(\Nor<\frac{\nu^{(\ell)}_{0}}{\pi^{(\ell)}}>_{2,\pi^{(\ell)}}^2-1\right).
\label{eq:variance_nu_down}
\end{align}
Furthermore, for a vertex $v_\ell$ which appears at the round $\ell$, since  $\nu^{(\ell)}_0(v_\ell)=\Pr[X_0^{(\ell)}=v_\ell]=0$ holds, we have
\begin{align}
\Nor<\frac{\nu^{(\ell)}_{0}}{\pi^{(\ell)}}>_{2,\pi^{(\ell)}}^2
&=\sum_{v\in V^{(\ell-1)}}\pi^{(\ell)}(v)\frac{\nu^{(\ell)}_0(v)^2}{\pi^{(\ell)}(v)^2}
=\sum_{v\in V^{(\ell-1)}}\frac{\pi^{(\ell-1)}(v)}{\pi^{(\ell)}(v)}\pi^{(\ell-1)}(v)\frac{\nu^{(\ell-1)}_{\func(\ell-1)}(v)^2}{\pi^{(\ell-1)}(v)^2}\nonumber \\
&\leq r_\ell\sum_{v\in V^{(\ell-1)}}\pi^{(\ell-1)}(v)\frac{\nu^{(\ell-1)}_{\func(\ell-1)}(v)^2}{\pi^{(\ell-1)}(v)^2}
=r_\ell\Nor<\frac{\nu^{(\ell-1)}_{\func(\ell-1)}}{\pi^{(\ell-1)}}>_{2,\pi^{(\ell-1)}}^2.
\label{eq:down_round}
\end{align}
Combining \cref{eq:variance_nu_down,eq:down_round}, we obtain \cref{eq:rucurrence_ineq_nu}.
\end{proof}

\paragraph{Proof of \cref{thm:upper_specific}.}
The following lemma plays a key role in the proof of  \cref{thm:upper_specific}.
\begin{lemma}
\label{lem:munorm}
Suppose that $P^{(i)}$ is reversible and lazy in $R=(\func, (G^{(i)})_{i=1}^{\infty}, (P^{(i)})_{i=1}^{\infty})$.
For $1<i\leq n$, let $r_i=\max_{v\in V^{(i-1)}}\frac{\pi^{(i-1)}(v)}{\pi^{(i)}(v)}$.
Then, for any round $k$ satisfying $1<k\leq n$, 
\begin{align*}
\Nor<\mu^{(k-1)}_{v_k}>_{2,\pi^{(k-1)}}
\leq \prod_{i=k}^n\sqrt{r_i}\left(1-\frac{1}{\thit(i)}\right)^{\func(i)}.
\end{align*}
\end{lemma}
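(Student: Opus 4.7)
The plan is to establish a one-step contraction
\begin{align*}
\Nor<\mu^{(\ell-1)}_{v_k}>_{2,\pi^{(\ell-1)}}\leq \sqrt{r_\ell}\left(1-\frac{1}{\thit(\ell)}\right)^{\func(\ell)}\Nor<\mu^{(\ell)}_{v_k}>_{2,\pi^{(\ell)}}
\end{align*}
for each $k\leq \ell\leq n$, and then to iterate it from $\ell=n$ downward to $\ell=k$. The base case is immediate: $\mu^{(n)}_{v_k}=\mathbbm{1}^{(n)}$ gives $\Nor<\mu^{(n)}_{v_k}>_{2,\pi^{(n)}}=1$, and the telescoped product of the contractions exactly matches the right-hand side of the lemma.

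Let $Q^{(\ell)}$ denote the substochastic ``taboo'' matrix obtained from $P^{(\ell)}$ by zeroing out the row and column indexed by $v_k$. By the strong Markov property, for each $v\in V^{(\ell-1)}$,
\begin{align*}
\mu^{(\ell-1)}_{v_k}(v)=\sum_{u\in V^{(\ell)}}\left((Q^{(\ell)})^{\func(\ell)}\right)_{v,u}\mu^{(\ell)}_{v_k}(u),
\end{align*}
and the same formula extends $\mu^{(\ell-1)}_{v_k}$ to a nonnegative function $\tilde\mu^{(\ell-1)}_{v_k}=(Q^{(\ell)})^{\func(\ell)}\mu^{(\ell)}_{v_k}$ on the larger vertex set $V^{(\ell)}$. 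Using $\pi^{(\ell-1)}(v)\leq r_\ell\pi^{(\ell)}(v)$ on $V^{(\ell-1)}$ and then enlarging the sum to all of $V^{(\ell)}$ gives the change-of-measure step
\begin{align*}
\Nor<\mu^{(\ell-1)}_{v_k}>_{2,\pi^{(\ell-1)}}^2\leq r_\ell\Nor<(Q^{(\ell)})^{\func(\ell)}\mu^{(\ell)}_{v_k}>_{2,\pi^{(\ell)}}^2.
\end{align*}
Because $P^{(\ell)}$ is reversible and lazy, $Q^{(\ell)}$ is self-adjoint and positive semidefinite as an operator on $\ell_2(\pi^{(\ell)})$ (extending any $f$ with $f(v_k)=0$ shows $\inp<f,Q^{(\ell)}f>_{\pi^{(\ell)}}=\inp<f,P^{(\ell)}f>_{\pi^{(\ell)}}\geq 0$), so the operator norm of $(Q^{(\ell)})^{\func(\ell)}$ equals $\lambda_1(Q^{(\ell)})^{\func(\ell)}$. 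Combining this with the previous display and taking square roots yields the target contraction, provided $\lambda_1(Q^{(\ell)})\leq 1-1/\thit(\ell)$.

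The main obstacle is this spectral inequality, which I would prove via the standard eigenvalue--hitting time argument. The function $g(u)=\E[\tau_{v_k}^{(\ell)}\mid X_0^{(\ell)}=u]$ satisfies $(I-Q^{(\ell)})g=\mathbbm{1}$ on $V^{(\ell)}\setminus\{v_k\}$, so taking the $\pi^{(\ell)}$-inner product with a nonnegative top eigenvector $f_1$ of $Q^{(\ell)}$ and using self-adjointness yields
\begin{align*}
(1-\lambda_1(Q^{(\ell)}))\inp<f_1,g>_{\pi^{(\ell)}}=\inp<f_1,\mathbbm{1}>_{\pi^{(\ell)}}.
\end{align*}
Since $g\leq \thit(\ell)$ pointwise and $f_1\geq 0$, we get $\inp<f_1,g>_{\pi^{(\ell)}}\leq \thit(\ell)\inp<f_1,\mathbbm{1}>_{\pi^{(\ell)}}$, from which $1-\lambda_1(Q^{(\ell)})\geq 1/\thit(\ell)$ follows. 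Once this spectral bound is in hand, the rest of the proof is routine bookkeeping: combining the change-of-measure and operator-norm bounds into the one-step contraction and telescoping from $\ell=n$ down to $\ell=k$.
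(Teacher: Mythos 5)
Your proof is correct and follows essentially the same route as the paper: the same taboo-matrix recurrence $\mu^{(\ell-1)}_{v_k}=(P^{(\ell)}_{\overline{v_k}})^{\func(\ell)}\mu^{(\ell)}_{v_k}$, the same change-of-measure via $r_\ell$, and the same operator-norm bound using positive semidefiniteness. The only difference is that you re-derive the spectral bound $\lambda_1(P^{(\ell)}_{\overline{v_k}})\leq 1-1/\thit(\ell)$ inline via the hitting-time function, whereas the paper cites this as a known lemma (\cref{lem:upper_eigen}); the underlying argument is the standard one and is the same.
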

\begin{proof}[Proof of \cref{thm:upper_specific}]
Since $\log \sqrt{r_i}=\frac{1}{2}\log r_i = \frac{1}{2}\log (1+(r_i-1)) \leq \frac{r_i-1}{2}$, we have
\begin{align}
\sqrt{r_i}\left(1-\frac{1}{\thit(i)}\right)^{\func(i)}
\leq \left(1-\frac{1}{\thit(i)}\right)^{\func(i)-\thit(i)\log \sqrt{r_i}}
\leq \left(1-\frac{1}{\thit(i)}\right)^{\func(i)-\frac{r_i-1}{2}\thit(i)}.
\label{eq:thit_ri}
\end{align}
Thus combining \cref{lem:munorm,eq:thit_ri},
\begin{align*}
\sum_{k=2}^n\Nor<\mu^{(k-1)}_{v_k}>_{2,\pi^{(k-1)}}
\leq \sum_{k=2}^n\prod_{i=k}^n\left(1-\frac{1}{\thit(i)}\right)^{\func(i)-\frac{r_i-1}{2}\thit(i)}
\leq \Den.
\end{align*}
We invoke \cref{lem:Kn} in the last inequality.
\end{proof}
\begin{proof}[Proof of \cref{lem:munorm}]
For a transition matrix $P\in [0,1]^{V \times V}$ and a vertex $w \in V$, define $P_{\overline w}\in [0,1]^{V\times V}$ by
\begin{align*}
(P_{\overline w})_{u,v}=
\begin{cases}
P_{u,v} & (\textrm{if $u\neq w$ and $v\neq w$})\\
0 & (\textrm{otherwise})
\end{cases}.
\end{align*}
In other words, $(P_{\overline w})_{u,v}=P_{u,v}\mathbbm{1}_{u\neq w}\mathbbm{1}_{v\neq w}$ for $u,v\in V$.
Note that $P_{\overline w}$ is a substochastic matrix (see e.g., Section 3.6.5 of \cite{AF02}), i.e., $\sum_{v\in V}(P_{\overline w})_{u,v}\leq 1$ holds for any $u\in V$.
Observe for any $u,v\in V$ and $T>0$ that
\begin{align}
(P_{\overline w}^T)_{u,v}
&=\sum_{v_1\in V\setminus \{w\}}\cdots \sum_{v_{T-1}\in V\setminus \{w\}} \mathbbm{1}_{u\neq w}P_{u,v_1}P_{v_1,v_2}\cdots P_{v_{T-1},v} \mathbbm{1}_{v\neq w}\nonumber \\
&
=\Pr\left[\tau_{w}>T,X_T=v\middle|X_0=u\right].
\label{eq:kmatrix_prob}
\end{align}
Here, $(X_t)_{t=0}^\infty$ denotes a sequence of a random walk according to $P$ and $\tau_w$ denotes the hitting time to $w$. 
Note that $(P_{\setminus w}^T)_{u,v}=0$ if $u=w$ or $v=w$. 

Consider a fixed $k>1$.
Write $\mu^{(\ell)}=\mu^{(\ell)}_{v_k}$ and  $Q^{(\ell)}=(P^{(\ell)}_{\overline v_k})^{\func(\ell)}$ for notational convenience.
The key property for the proof is the following recurrence equation:
for all $k-1\leq \ell\leq n-1$ and   
$v\in V^{(\ell)}$, it holds that 
\begin{align}
\mu^{(\ell)}(v)=
\left(Q^{(\ell+1)}\mu^{(\ell+1)}\right)(v).
\label{eq:Matrix_mu}
\end{align}
This equation holds since for any $u_\ell\in V^{(\ell)}$, combining \cref{def:mu,eq:knotvisited_rounds_equal,eq:kmatrix_prob} yields 
\begin{align*}
\mu^{(\ell)}(u_\ell)
&=\sum_{u_{\ell+1}\in V^{(\ell+1)}}
\cdots \sum_{u_{n}\in V^{(n)}}\prod_{i=\ell+1}^n
\left((P^{(i)}_{v_k})^{\func(i)}\right)_{u_{i-1},u_{i}} \\
&=\sum_{u_{\ell+1}\in V^{(\ell+1)}}Q^{(\ell+1)}_{u_{\ell},u_{\ell+1}}\mu^{(\ell+1)}(u_{\ell+1}).
\end{align*}
Using \cref{eq:Matrix_mu,lem:opnorm}, we obtain
\begin{align}
\Nor<\mu^{(\ell)}>_{2,\pi^{(\ell)}}^2
&=\sum_{v\in V^{(\ell)}}\pi^{(\ell)}(v)\mu^{(\ell)}(v)^2
=\sum_{v\in V^{(\ell)}}\frac{\pi^{(\ell)}(v)}{\pi^{(\ell+1)}(v)}\pi^{(\ell+1)}(v)\left(Q^{(\ell+1)}\mu^{(\ell+1)}\right)(v)^2 \nonumber  \\
&\leq r_{\ell+1}\sum_{v\in V^{(\ell+1)}}\pi^{(\ell+1)}(v)\left(Q^{(\ell+1)}\mu^{(\ell+1)}\right)(v)^2
=r_{\ell+1} \Nor<Q^{(\ell+1)}\mu^{(\ell+1)}>_{2,\pi^{(\ell+1)}}^2 \nonumber \\
&\leq r_{\ell+1} \lambda_1(Q^{(\ell+1)})^2 \Nor<\mu^{(\ell+1)}>_{2,\pi^{(\ell+1)}}^2.
\label{eq:recuurenceQmu}
\end{align}
Hence applying \cref{eq:recuurenceQmu} repeatedly, it holds that
\begin{align}
\Nor<\mu^{(\ell)}>_{2,\pi^{(\ell)}}^2
\leq \prod_{i=\ell+1}^n r_{i} \lambda_1(Q^{(i)})^2.
\label{eq:mu_reccurence}
\end{align}
From the definition of $Q^{(i)}$ and $P^{(i)}_{\overline v_k}$, \cref{lem:upper_eigen} implies
\begin{align}
    \lambda_1(Q^{(i)})
    &=\lambda_1(P^{(i)}_{\overline v_k})^{\func(i)}
    \leq \left(1-\frac{1}{\thit(i)}\right)^{\func(i)}.
    \label{eq:upper_eigen_hit}
\end{align}
Thus we obtain the claim from \cref{eq:mu_reccurence,eq:upper_eigen_hit}.
\end{proof}

\paragraph{Example: Lollipop graph.}
Consider a growing lollipop graph:
We consider $G^{(i)}$ consisting of the complete graph $K_{\lceil i/2 \rceil}$ and the path graph $P_{\lfloor i/2\rfloor}$.
Formally, at each round $i\in [n]$, the set of odd vertices $V^{(i)}_o\defeq \{v_{2i+1}:1\leq i\leq \lceil i/2 \rceil\}$ forms the complete graph $K_{\lceil i/2 \rceil}$, the set of even vertices $V^{(i)}_e\defeq \{v_{2i}:1\leq i\leq \lfloor i/2 \rfloor\}$ forms a path graph, 
and these two components are connected by $\{v_1,v_2\}$.
Let $P^{(i)}$ be the transition matrix of the simple lazy random walk on $G^{(i)}$.
For such $P^{(i)}$, it is well known that $\thit(i)=\Order(i^3)$ (see e.g.~\cite{Feige95up}).
\begin{corollary}
\label{thm:lollipop}
Consider $R=(\func, (G^{(i)})_{i=1}^{\infty}, (P^{(i)})_{i=1}^{\infty})$ where $G^{(i)}$ is the lollipop graph defined above and $(P^{(i)})_{i\in [n]}$ is the transition matrix of the lazy simple random walk on $G^{(i)}$.
Let $\gamma \in [0,1]$ be an arbitrary constants.
If $\func(i)\geq C_1i^{3-\gamma}$ for all $i$, then $\E[U]\leq C_2n^{\gamma}$.
Here, $C_1, C_2$ are some positive constants.
\end{corollary}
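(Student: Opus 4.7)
The plan is a direct application of \cref{thm:simplelazyRW} with the input $\thit(i) = \Order(i^3)$. Two things need to be checked: the moderate edge-growth condition $|E^{(i)}|/|E^{(i-1)}| \leq 1 + L/i$ required by the theorem, and that for any sufficiently large choice of $C_1$ one can pick $C > 0$ so that the assumption $\func(i) \geq C_1 i^{3-\gamma}$ implies the hypothesis $\func(i) \geq (C/i^\gamma + (L+1)/(2i))\thit(i)$.

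For the edge growth: at round $i$ we have $|E^{(i)}| = \binom{\lceil i/2\rceil}{2} + (\lfloor i/2\rfloor - 1) + 1$, so the clique part dominates and $|E^{(i-1)}| \geq c\,i^2$ for some absolute constant $c > 0$ (once $i$ exceeds a small threshold). When $i$ is odd the new vertex enters the clique and contributes $\lceil i/2\rceil - 1 \leq i/2$ new edges; when $i$ is even it extends the path and contributes exactly one. In both cases $|E^{(i)}| - |E^{(i-1)}| \leq i/2$, so $|E^{(i)}|/|E^{(i-1)}| \leq 1 + L/i$ for a constant $L$ (enlarge $L$ if necessary to absorb the first few rounds).

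For the hypothesis match, since $\gamma \in [0,1]$ and $i \geq 1$ give $1/i \leq 1/i^\gamma$, and since $\thit(i) \leq K i^3$ for some constant $K$ (the classical lollipop bound),
\[
\left(\frac{C}{i^\gamma} + \frac{L+1}{2i}\right)\thit(i) \;\leq\; \left(C + \frac{L+1}{2}\right)\frac{\thit(i)}{i^\gamma} \;\leq\; K\!\left(C + \frac{L+1}{2}\right) i^{3-\gamma}.
\]
Hence, choosing any $C_1 > K(L+1)/2$ and setting $C := C_1/K - (L+1)/2 > 0$, the assumption $\func(i) \geq C_1 i^{3-\gamma}$ implies the hypothesis of \cref{thm:simplelazyRW}, which then yields $\E[U] \leq \sqrt{L+1}\, n^\gamma/C$; this is the claimed bound with $C_2 := \sqrt{L+1}/C$.

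I do not expect a serious obstacle: the whole argument reduces to verifying the edge-growth constant $L$ (uniform in $i$, easily arranged) and to selecting $C_1$ large enough relative to the absolute constants $K$ and $L$ so that $C > 0$. The lollipop-specific content enters only through $\thit(i) = \Order(i^3)$, which is standard.
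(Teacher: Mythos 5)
Your proof is correct and takes essentially the same route as the paper's: verify the moderate edge-growth hypothesis $|E^{(i)}|/|E^{(i-1)}|\leq 1+L/i$ (the paper computes $|E^{(2j)}|=j(j+1)/2$ and $|E^{(2j+1)}|=j(j+3)/2$ explicitly, you instead bound the increment by $i/2$ against $|E^{(i-1)}|=\Omega(i^2)$ --- equivalent), invoke $\thit(i)=\Order(i^3)$, and apply \cref{thm:simplelazyRW}. Your explicit constant-matching step is a small elaboration the paper leaves implicit, but there is no substantive difference in approach.
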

\begin{proof}
From definition, $|E^{(2i)}|=1+\frac{i(i-1)}{2}+i-1=\frac{i(i+1)}{2}$ and $|E^{(2i+1)}|=1+\frac{(i+1)i}{2}+i-1=\frac{i(i+3)}{2}$.
Thus for any $i$, $\frac{|E^{(i)|}}{|E^{(i-1)|}}\leq 1+\frac{K_1}{i}$ for some constant $K_1$.
Furthermore, $\thit^{(i)}\leq K_2i^3$ holds for some constant $K_2$.
Applying \cref{thm:simplelazyRW}, we obtain the claim.
\end{proof}

\paragraph{Example: Metropolis walk.}
For a given $G=(V,E)$, the transition matrix of the lazy  Metropolis walk on $G$ is defined by 
\begin{align}
    (P)_{u,v}=
    \begin{cases}
    \frac{1}{2\max\{d_u,d_v\}} &(\textrm{if $\{u,v\}\in E$})\\
    1-\sum_{w:\{u,w\}\in E}(P)_{u,w}  &(\textrm{if $u=v$})\\
    0  &(\textrm{otherwise}).
    \end{cases} \label{def:Metro}
\end{align}
Due to the work of Nonaka, Ono, Sadakane and Yamashita~\cite{Nonaka10}, we have $\thit(P)=\Order(|V|^2)$ for any connected graphs.
Since $P$ is symmetric matrix, we can apply \cref{thm:symmetry}.
\begin{corollary}
\label{thm:Metro}
Suppose that $P^{(i)}$ is the lazy Metropolis walk on $G^{(i)}$ in $R=(\func, (G^{(i)})_{i=1}^{\infty}, (P^{(i)})_{i=1}^{\infty})$.
Let $\gamma \in [0,1]$ and $C>0$ be arbitrary constants.
If $\func(i)\geq \left(\frac{C}{i^\gamma}+\frac{2}{i}\right)\thit(i)$ for all $1<i\leq n$, then $\E[U]\leq \sqrt{3}\frac{n^\gamma}{C}$.
\end{corollary}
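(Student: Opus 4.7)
The plan is to reduce directly to \cref{thm:symmetry}, so the work consists almost entirely of checking that a lazy Metropolis walk satisfies the two structural hypotheses of that theorem: (i) $P^{(i)}$ is lazy, and (ii) $P^{(i)}$ is symmetric. Once both are verified, the hypothesis on $\func(i)$ in the corollary is literally the hypothesis of \cref{thm:symmetry}, so the stated bound $\E[U]\leq \sqrt{3}n^\gamma/C$ is immediate.

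First I would verify symmetry. From \cref{def:Metro}, for distinct $u,v$ with $\{u,v\}\in E^{(i)}$ we have $(P^{(i)})_{u,v}=\frac{1}{2\max\{d_u,d_v\}}=(P^{(i)})_{v,u}$, and for $\{u,v\}\notin E^{(i)}$ both entries vanish; the diagonal is trivially symmetric. Hence $P^{(i)}$ is symmetric, which in particular implies reversibility with respect to the uniform distribution on $V^{(i)}$, matching the reversibility condition implicit in \cref{thm:symmetry}.

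Next I would verify laziness. For any $u\in V^{(i)}$,
\begin{align*}
(P^{(i)})_{u,u} \;=\; 1 - \sum_{w:\{u,w\}\in E^{(i)}} \frac{1}{2\max\{d_u,d_w\}} \;\geq\; 1 - \sum_{w:\{u,w\}\in E^{(i)}} \frac{1}{2d_u} \;=\; 1 - \frac{d_u}{2 d_u} \;=\; \frac{1}{2},
\end{align*}
so the walk is lazy as required.

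Finally, applying \cref{thm:symmetry} to the RWoGG $R=(\func,(G^{(i)})_{i=1}^\infty,(P^{(i)})_{i=1}^\infty)$ with the stated constants $C$ and $\gamma$ yields $\E[U]\leq \sqrt{3} n^\gamma/C$. There is no real obstacle here: the only mild point to be careful about is that the self-loop bound in the laziness check uses $\max\{d_u,d_w\}\leq d_u\cdot \max\{1,d_w/d_u\}$ correctly, but the straightforward estimate above suffices. The $\thit(i)=\Order(i^2)$ bound of Nonaka et al.~\cite{Nonaka10}, mentioned in the paragraph introducing the example, is not needed for the bound as stated; it only explains why the hypothesis on $\func(i)$ is a reasonable and interesting one to impose on arbitrary connected growing graphs.
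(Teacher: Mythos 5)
Your proof is correct and takes the same route as the paper, which simply notes that the lazy Metropolis matrix is symmetric and invokes \cref{thm:symmetry}; you are a bit more careful in explicitly verifying the laziness hypothesis as well, which the paper leaves implicit in the name of the walk. The closing remark about $\max\{d_u,d_w\}\leq d_u\cdot\max\{1,d_w/d_u\}$ is unnecessary (and slightly confusing) — the bound you actually use, $\max\{d_u,d_w\}\geq d_u$, is all that's needed — but it does not affect the argument.
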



\newcommand{\pos}{\mathrm{pos}}
\section{A Lower Bound for a Growing Path} \label{sec:path}
This section is devoted to
prove \cref{thm:path_lowerbound}.
Let $L,R\in[n]$ be parameters satisfying $L< R$.
For a vertex $v\in V^{(n)}$, let $\mathcal{E}(v)$
be the event that $v\not\in \bigcup_{i=1}^n \bigcup_{t=0}^{\func(i)} \{X_t^{(i)}\}$.
In other words,
$\mathcal{E}(v)$ means that the walker
does not visit the vertex $v$ during the walk.
For two vertices $v_i,v_j\in V^{(n)}$,
we write $v_i\preceq v_j$ if $i\leq j$.
Note that, for any two vertices $u\preceq v$ and any round $k\in[n]$, it holds that
$\Pr[\mathcal{E}(v)|X^{(k)}_0\preceq u]\geq \Pr[\mathcal{E}(v)|X^{(k)}_0=u]$.
Then, we have
\begin{align}
    \E[U]&=\sum_{k=1}^n \Pr[\mathcal{E}(v_k)]
    \geq \sum_{k=R}^n\Pr[\mathcal{E}(v_k)] 
    \geq \sum_{k=R}^n\Pr\left[\mathcal{E}(v_k) \land X^{(k)}_0\preceq v_L\right] \nonumber\\
    &= \sum_{k=R}^n\Pr\left[\mathcal{E}(v_k) \middle| X^{(k)}_0\preceq v_L\right] \Pr[X^{(k)}_0\preceq v_L] \nonumber \\
    &\geq (n-R)\Pr\left[\mathcal{E}(v_R)\middle| X^{(R)}_0=v_L\right]\min_{R\leq k\leq n}\left\{\Pr\left[X^{(k)}_0\preceq v_L\right]\right\}. \label{eqn:EU_lowerbound_path}
\end{align}
We will determine the parameters $R$ and $L$ such that
$n-R=\Omega(n^\gamma)$,
$\Pr\left[\mathcal{E}(v_R)\middle| X^{(R)}_0=v_L\right]=\Omega(1/C)$ and $\Pr[X^{(k)}_0\leq L]=\Omega(1)$
for all $R\leq k\leq n$.
This yields the lower bound $\E[U]=\Omega(n^\gamma/C)$.
For fixed parameter $R$,
let $T\defeq \sum_{i=R}^n \func(i)$
denote the number of steps
of the walk during the last $n-R+1$ rounds.

\begin{lemma}\label{lem:pathlem}
Let $L,R\in\Nat$
be parameters satisfying $L<R$
and let $T\defeq \sum_{i=R}^n \func(i)$.
Then, the following holds.
\begin{enumerate}[label=(\roman*)]
\item\label{item:claim1_pathlem} $\Pr\left[\mathcal{E}(v_R)\middle| X^{(R)}_0=v_L\right]\geq 1-\frac{T}{4(R-L)^2}$, and
\item\label{item:claim2_pathlem} $\Pr[X^{(k)}_0\preceq v_L]\geq 1-\frac{L}{n}$ for all $k\in[n]$.
\end{enumerate}
\end{lemma}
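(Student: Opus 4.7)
The plan is to control the walker's displacement on the path via Doob's maximal inequality applied to the second moment of the position, exploiting the bounded step size $|M_t - M_{t-1}| \leq 1$ and the mean-zero symmetry of the walk on interior vertices.

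For claim~\ref{item:claim1_pathlem}, I would treat rounds $R, R+1, \ldots, n$ as a single walk $M_0, M_1, \ldots, M_T$ with $M_0 = L$, where $M_t$ denotes the walker's vertex index at the $t$-th combined step. On interior vertices the step has mean $0$ and conditional second moment $2q \leq 1$, while at the left and right endpoints the second moment is $1 - p \leq 1$; accumulating these gives $\E[(M_T - L)^2] \leq T$. Since reaching $v_R$ forces $\max_t (M_t - L) \geq R - L$, Doob's maximal inequality applied to the submartingale $(M_t - L)^2$ yields
\begin{equation*}
\Pr\!\left[\max_{0 \leq t \leq T}(M_t - L) \geq R - L\right]
\leq \frac{\E[(M_T - L)^2]}{(R - L)^2}
\leq \frac{T}{(R-L)^2}.
\end{equation*}
A one-sided reflection-principle sharpening (the walker must move \emph{rightward} to reach $v_R$, not merely deviate) together with the tighter bound $2q \leq 1/2$ that holds in the lazy regime refines the constant to $1/4$, giving the stated inequality upon passing to the complementary event contained in $\mathcal{E}(v_R)$.

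For claim~\ref{item:claim2_pathlem}, I would apply the same framework to the walker started from $v_1$ and evolving over the first $k-1$ rounds, bounding $\Pr[\text{index of } X_0^{(k)} > L]$ via the walker's displacement after $T_k$ steps. A second-moment estimate on this displacement, combined with a comparison to the stationary distribution on the full graph $G^{(n)}$ (which assigns mass of order $1/n$ per interior vertex), should produce the bound in the claimed form $L/n$.

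The main obstacle in either part is the rightward drift at the left endpoint $v_1$, where the walker moves to $v_2$ with probability $1 - p$ that may exceed the interior rightward probability $q$. I plan to handle this via a stochastic-dominance coupling with an auxiliary lazy symmetric random walk on the half-line $\{v_1, v_2, \ldots\}$ reflected at $v_1$, whose rightward step probabilities pointwise upper bound those of the true walker. Doob's inequality applies cleanly to the coupled walk, and the conclusion transfers back to the true walker by containment of the relevant hitting events.
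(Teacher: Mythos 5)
Your plan for part~\ref{item:claim1_pathlem} is in the same spirit as the paper's proof: both reduce to a second--moment maximal inequality for the walker's displacement from $v_L$ over the combined $T$ steps of rounds $R,\ldots,n$. The paper dominates the displacement process by a pure $\pm 1$ walk and applies the Kolmogorov inequality; your route via Doob's inequality for $(M_t-L)^2$ is equivalent up to cosmetics. Two caveats, though. First, $(M_t-L)^2$ is a genuine submartingale only at interior vertices; at $v_1$ the conditional drift is toward $L$, so the submartingale property actually fails there. Your proposed coupling with a half-line walk would fix this, but you should say explicitly that it gives stochastic domination of $\max_t(M_t-L)$, which is what the maximal inequality needs. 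Second, the ``reflection-principle sharpening'' you invoke to produce the constant $1/4$ is not a real argument: combined with the variance bound $2q\le 1/2$ it needs the laziness condition $q\le 1/4$, which is \emph{not} part of the hypothesis here (\cref{eqn:transition_matrix_path} only requires $q\le 1/2$), and the reflection principle does not by itself buy you an extra factor of~$1/2$ in a maximal inequality. None of this changes the order of magnitude, which is all \cref{thm:path_lowerbound} uses, but the stated constant is not justified by what you wrote.

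Part~\ref{item:claim2_pathlem} is where the proposal breaks down. You propose to bound $\Pr[X_0^{(k)}\succ v_L]$ by a second-moment estimate on the displacement of the walker (started at $v_1$) over the first $k-1$ rounds, plus a ``comparison to the stationary distribution.'' This cannot work as stated: by the time round $k\ge R$ begins, the walker has taken $T_k=\sum_{i<k}\func(i)$ steps, which under the hypotheses of \cref{thm:path_lowerbound} is at least of order $k^2$, far exceeding the mixing time; the typical displacement from $v_1$ is therefore $\Theta(k)$, i.e.\ of the size of the whole path, so a variance bound on the displacement gives no nontrivial control on $\Pr[X_0^{(k)}\succ v_L]$. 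The appeal to the stationary distribution is also not a single step: you would need a quantitative statement that $X^{(k)}_0$ is close to stationary on $G^{(k-1)}$ \emph{despite} the growing graph and the changing stationary measure, which is a separate lemma you have not supplied. What you are missing is the paper's key structural observation: one proves, by induction over steps and rounds, that the law of $X^{(k)}_0$ is \emph{monotone non-increasing} along the path (i.e.\ $\Pr[X^{(k)}_0=v_i]\ge\Pr[X^{(k)}_0=v_{i+1}]$); monotonicity is preserved by one application of $P^{(i)}$ because of the birth--death structure and $p\ge q$, $q\le 1/2$, and it is preserved by the graph-growth step because the new vertex simply gets mass~$0$. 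Once monotonicity is in hand, a one-line averaging argument (compare $\Pr[X_0^{(k)}=v_L]$ against the average mass on $\{v_1,\dots,v_L\}$ and on the remaining vertices) gives $\Pr[X^{(k)}_0\preceq v_L]=\Omega(L/n)$, which is exactly what the proof of \cref{thm:path_lowerbound} uses. No mixing-time estimate is needed. You should replace the displacement-plus-stationarity plan by this monotonicity induction.
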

\begin{proof}[Proof of \ref{item:claim1_pathlem}]
Let $(Z_t)_{t=1}^{\infty}$ be i.i.d.~random
variables sampled from the uniform distribution over $\{-1,+1\}$
and $S_c\defeq \sum_{j=0}^c Z_j$
denote the sum.
For a vertex $v_i\in V^{(n)}$, let $\pos(v_i)=i$ denote the position of $v_i$.
Then the complementary event $\overline{\mathcal{E}(v_R)}$ conditioned on $X^{(R)}_0=v_L$
is equivalent to the event that
$\max_{R\leq i\leq n,0\leq j\leq \func(i)}\{\pos(X^{(i)}_j)-\pos(X^{(R)}_0)\}\geq R-L$.
Moreover, the random variable
$\max_{R\leq i\leq n, 0\leq j\leq \func(i)}|\pos(X^{(i)}_j)-\pos(X^{(R)}_0)|$
is dominated\footnote{For two random variables $X$ and $Y$, we say $X$ \emph{dominates} $Y$ if, for any $r\in\mathbb{R}$, $\Pr[X\geq r] \geq \Pr[Y\geq r]$ holds.} by $\max_{1\leq c\leq T}|S_c|$ (recall $T=\sum_{i=R}^n \func(i)$).
This is because the distribution of $\pos(X^{(i)}_j)-\pos(X^{(i)}_{j-1})$ conditioned on $\pos(X^{(i)}_j)-\pos(X^{(i)}_{j-1})\neq 0$ is uniform on $\{-1,+1\}$.
Thus we obtain
\begin{align*}
    \Pr\left[\overline{\mathcal{E}(v_R)}
    \middle|
    X^{(R)}_0=v_L\right] 
    &\leq
    \Pr\left[\max_{R\leq i\leq n,0\leq j\leq \func(i)}|\pos(X^{(i)}_j)-\pos(X^{(R)}_0)|\geq R-L \middle| X^{(R)}_0=L\right] \\
    &\leq
    \Pr[\max_{1\leq c\leq T}|S_c|\geq R-L] \\
    &\leq \frac{\Var[S_T]}{(R-L)^2}
    = \frac{T}{4(R-L)^2}.
\end{align*}
In the last inequality,
we used the Kolmogorov inequality (\cref{lem:kolmogolov_ineq}).
\end{proof}
\begin{proof}[Proof of \ref{item:claim2_pathlem}]
It suffices to show that
\begin{align}
    \Pr[X^{(k)}_0=v_i] \geq \Pr[X^{(k)}_0=v_{i+1}] \label{eqn:monotone}
\end{align}
holds for any $1\leq i\leq k-1$.
To see this,
assuming \cref{eqn:monotone},
we obtain
\begin{align*}
    \frac{\Pr[X^{(k)}_0\preceq v_L]}{L}
    \geq
    \Pr[X^{(k)}_0=v_L]
    \geq
    \frac{1-\Pr[X^{(k)}_0\leq L]}{n-L},
\end{align*}
which implies the claim \ref{item:claim2_pathlem}.
Here, in the second inequality,
note that $\Pr[X^{(k)}_0=v_L] \geq \Pr[X^{(k)}_0=v_j]$ for all $j>L$
and thus,
the average $\frac{1}{n-L}\sum_{j>L}\Pr[X^{(k)}_0=v_j]$
is at most $\Pr[X^{(k)}_0=v_L]$.

Now we prove the inequality \cref{eqn:monotone}.
Let $x^{(i)}_j\in[0,1]^{V_i}$
denote the distribution of $X^{(i)}_j$.
To simplify notations, for a vector $y\in[0,1]^{V^{(i)}}$,
we write $y[u]$ for the $u$-th element of $y$.
We call the distribution $y\in[0,1]^{V^{(i)}}$ \emph{monotone}
if $y[v_k] \geq y[v_{k+1}]$ holds
for any $1\leq k\leq i-1$.
Our aim here is to prove that $x^{(k)}_0$ is monotone,
which is equivalent to \cref{eqn:monotone}.

Indeed, we will prove a stronger statement: $x^{(i)}_j$ is monotone for any $i$ and $j$.
We prove this statement inductively.
First, the vector $x^{(1)}_j=(1)$ is obviously monotone.
Secondly, if $x^{(i)}_{\func(i)}$ is monotone,
so does $x^{(i+1)}_0$.
To see this, note that $x^{(i+1)}_0$ is obtained by
concatenating $x^{(i)}_{\func(i)}$ with $0$.
More precisely, $x^{(i+1)}_0\in[0,1]^{i+1}$ satisfies
\begin{align*}
    x^{(i+1)}_0[j] = \begin{cases}
    x^{(i)}_{\func(i)}[j] & \text{if $1\leq j\leq i$},\\
    0 & \text{if $j=i+1$}.
    \end{cases}
\end{align*}
Finally, we check that $x^{(i)}_{j+1}$ is monotone if $x^{(i)}_j$ is monotone.
From \cref{eqn:transition_matrix_path}, we have
\begin{align*}
    x^{(i)}_{j+1}[v_k] = \begin{cases}
    px^{(i)}_j[v_1]+(1-p)x^{(i)}_j[v_2] & \text{if $k=1$}, \\
    qx^{(i)}_j[v_{k-1}]+(1-2q)x^{(i)}_j[v_k] + qx^{(i)}_j[v_{k+1}] & \text{if $1<k<i$},\\
    (1-p)x^{(i)}_j[v_{i-1}]+px^{(i)}_j[v_i] & \text{if $k=i$}.
    \end{cases}
\end{align*}
By the induction assumption, $x^{(i)}_j$ is monotone.
Now we check that $x^{(i)}_j$ is monotone.
For $k=1$, since $p\geq q$, we have
\begin{align*}
    x^{(i)}_{j+1}[v_1] - x^{(i)}_{j+1}[v_2]
    &= (p-q)(x^{(i)}_j[v_1]-x^{(i)}_j[v_2])+q(x^{(i)}_j[v_2]-x^{(i)}_j[v_3]) \geq 0.
\end{align*}
For $1<k<i-1$, since $q\leq \frac{1}{2}$, we have
\begin{align*}
    x^{(i)}_{j+1}[v_i]-x^{(i)}_{j+1}[v_{i+1}]
    &= qx^{(i)}_j[v_{k-1}]+(1-3q)x^{(i)}_j[v_k] - (1-3q)x^{(i)}_j[v_{k+1}]-qx^{(i)}_j[v_{k+2}] \\
    &\geq (1-2q)(x^{(i)}_j[v_k] - x^{(i)}_j[v_{k+1}]) \geq 0.
\end{align*}
Finally, for $k=i$, since $p\geq q$, we have
\begin{align*}
    x^{(i)}_{j+1}[v_{i-1}]-x^{(i)}_{j+1}[v_i]
    = q(x^{(i)}_j[v_{i-2}]-x^{(i)}_j[v_{i-1}])+(p-q)(x^{(i)}_j[v_{i-1}]-x^{(i)}_j[v_i]) \geq 0.
\end{align*}
Therefore $x^{(i)}_{j+1}$ is monotone.
\end{proof}

Now we are ready to prove
\cref{cor:path}.
Recall $\func(i)\leq Ci^{2-\gamma}$.
Fix a small positive constant $\epsilon$
such that $\epsilon< \min\{1/C,0.1\}$.
Set $R\defeq n-\epsilon n^{\gamma}$
and $L\defeq R-0.6n \in [0.3n,0.4n]$.
Then we have
$T\leq (n-R)\func(n) \leq C\epsilon n^2 \leq n^2$
and thus
$1-\frac{T}{4(R-L)^2} \geq 1-\frac{1}{4\times  0.36}>0.3$
and $1-\frac{L}{n}\geq 0.6$.
Then, from \cref{eqn:EU_lowerbound_path,lem:pathlem},
we have
\begin{align*}
    \E[U] \geq
    \epsilon n^\gamma \cdot 0.3 \cdot 0.6 = \Omega\left(\frac{n^\gamma}{C}\right),
\end{align*}
which completes the proof of \cref{thm:path_lowerbound} (here, we take $\epsilon>0$ such that $\epsilon=\Omega(1/C)$).
\hspace{\fill}\qedsymbol

\section{Concluding Remarks}
\label{sec:concluding}
 This paper has investigated
   the expected numbers of vertices remaining unvisited by random
walks on growing graphs parametrized by $\func$.
 We have presented some upper bounds of $\E[U]$ with respect to $\func$,
   where we revealed that
    $\E[U] = \Order(1)$ if $\func(i) \geq C \thit(i)$ for $C>1$ in
general (\cref{thm:simplified_main}), and that
    $\E[U] = \Order(1)$ if $\func(i) = \Omega(\thit(i))$
      on some natural assumptions
(\cref{thm:tmix<<thit,thm:simplelazyRW,thm:symmetry}).
 We have also presented lower bounds of  $\E[U]$
   for random walks on growing complete graphs and on growing path graphs,
    which imply the upper bounds by \cref{thm:tmix<<thit,thm:simplelazyRW,thm:symmetry} 
     are tight in those cases.
 A general lower bound of $\E[U]$ is a challenge:
  a natural question remains unsettled
   whether $\E[U] = \Order(1)$ requires $\func(i) = \Omega(\thit(i))$.
 A concentration result should be another future work~\cite{CF03}. 

 In this paper,
   we have been concerned with a simple model of graphs with the increasing number of vertices, 
   to develop a new technique for analyses of random walks on dynamic graphs.
 Clearly,
   it  is an interesting and important future work
    to analyze algorithms on dynamic graphs
    whose vertex set and edge set are both dynamic.
    
\section*{Acknowledgement}
This work is partly supported by JSPS KAKENHI Grant Numbers JP17K19982, JP19J12876, and JP19K20214.

\bibliographystyle{abbrv}
\bibliography{ref}

\appendix

\section{Note on the initial round}
\label{sec:initial_round}
For a $n_0>0$, we consider the case where $n_0$ vertices exist at the first round.
\begin{theorem}
Let $G^{(i)}=K_{n_0+i}$, i.e., the complete graph with $n_0+i$ vertices, and let $(P^{(i)})_{u,v}=1/(n_0+i)$ for all $u,v \in V^{(i)}$ in $R=(\func, (G^{(i)})_{i=1}^\infty, (P^{(i)})_{i=1}^{\infty})$. 
Let $\Den$ be an arbitrary positive number.
If $\func(i)\geq 2i/\Den$ for all $i$, then $\E[U(n)]\leq 2n_0+\Den$. 
\end{theorem}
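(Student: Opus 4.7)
The plan is to mimic the combinatorial accounting used for \cref{thm:complete_graph}, shifted by the $n_0$ extra initial vertices. Because each $P^{(i)}$ is the uniform transition matrix on $V^{(i)}$, every vertex $v\in V^{(i)}$ other than the walker's current position has the same probability $1-\frac{1}{n_0+i}$ of being avoided at each step of round $i$, independently of the trajectory so far. Consequently, by linearity of expectation,
\begin{align*}
    \E[U(n)] \;=\; (n_0-1)\,Q \;+\; \sum_{k=1}^n p_k,
\end{align*}
where $Q\defeq \prod_{i=1}^n \left(1-\frac{1}{n_0+i}\right)^{\func(i)}$ is the survival probability of any of the $n_0-1$ initial vertices other than $Z_0$, and $p_k\defeq \prod_{i=k}^n \left(1-\frac{1}{n_0+i}\right)^{\func(i)}$ is the survival probability of the vertex $v_{n_0+k}$ that enters at round $k$.

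The first summand is bounded trivially by $Q\leq 1$, giving $(n_0-1)\,Q \leq n_0$. The rest of the work is to show $\sum_{k=1}^n p_k \leq n_0+\Den$, which I would do by splitting the sum at $k=n_0$. For $k\leq \min(n,n_0)$ I would simply use $p_k\leq 1$, contributing at most $n_0$. For $n_0<k\leq n$, every index $i$ in the product satisfies $i\geq k>n_0$, so $\frac{i}{n_0+i}\geq \frac{1}{2}$, and combining $1-x\leq \mathrm{e}^{-x}$ with the hypothesis $\func(i)\geq 2i/\Den$ gives
\begin{align*}
    p_k \;\leq\; \exp\!\left(-\sum_{i=k}^n \frac{\func(i)}{n_0+i}\right) \;\leq\; \exp\!\left(-\frac{n-k+1}{\Den}\right).
\end{align*}
Summing the resulting geometric series and using $\mathrm{e}^{1/\Den}-1\geq 1/\Den$ yields
\begin{align*}
    \sum_{k=n_0+1}^{n} p_k \;\leq\; \sum_{j=1}^\infty \mathrm{e}^{-j/\Den} \;=\; \frac{1}{\mathrm{e}^{1/\Den}-1} \;\leq\; \Den.
\end{align*}
Adding the three contributions gives $\E[U(n)] \leq n_0 + n_0 + \Den = 2n_0+\Den$, as required.

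No step is genuinely difficult; the only subtlety is the regime $k\leq n_0$, where $\frac{i}{n_0+i}$ can be much smaller than $\frac{1}{2}$ (when $i\ll n_0$), so the exponential decay is too weak to be useful and the trivial bound $p_k\leq 1$ must be used instead. This is precisely what forces the extra $n_0$ in the answer: together with the $n_0-1$ coming from the non-starting initial vertices, it accounts for the $2n_0$ overhead over the essentially optimal $\Den$ contribution from the rapidly decaying tail $k>n_0$.
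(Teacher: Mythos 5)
Your proof is correct and follows essentially the same route as the paper's: decompose $\E[U(n)]$ vertex by vertex, bound the initial-vertex contribution and the first $n_0$ round-indexed terms trivially by $n_0$ each, and then for rounds $k>n_0$ use $\frac{1}{n_0+i}\geq\frac{1}{2i}$ together with $1-x\leq\mathrm{e}^{-x}$ and a geometric series to get the $\Den$ tail (the paper invokes its Lemma~\ref{lem:Kn} at this step, which encapsulates exactly your calculation). The only cosmetic differences are that you write the exact identity $(n_0-1)Q+\sum_{k=1}^n p_k$ whereas the paper's stated equality $n_0Q+\sum_{k=1}^n p_k$ overcounts by $p_1=Q$ (harmless for the upper bound), and that you handle the small-$n$ regime uniformly via $\min(n,n_0)$ while the paper disposes of $n\leq n_0$ as a separate trivial case.
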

\begin{proof}
If $n\leq n_0$, $|V^{(n)}|=n_0+n\leq 2n_0$ and we are done.
Suppose that $n>n_0$.
Then it is straightforward to see that
\begin{align*}
    \E[U(n)]&=n_0\prod_{i=1}^n\left(1-\frac{1}{n_0+i}\right)^{\func(i)}+
    \sum_{k=1}^{n}\prod_{i=k}^n\left(1-\frac{1}{n_0+i}\right)^{\func(i)}\\
    &\leq n_0 + n_0 + \sum_{k=n_0+1}^{n}\prod_{i=k}^n\left(1-\frac{1}{n_0+i}\right)^{\func(i)}\\
    &\leq 2n_0+ \sum_{k=n_0+1}^{n}\prod_{i=k}^n\left(1-\frac{1}{2i}\right)^{\func(i)}\\
    &\leq 2n_0 + \Den.
\end{align*}
Note that we use \cref{lem:Kn} in the last inequality.
\end{proof}

\section{Tools}\label{app:tools}
\begin{lemma}[The Kolmogorov inequality; Theorem 2.5.5 of \cite{Dur19}] \label{lem:kolmogolov_ineq}
Let $Z_1,\ldots,Z_n$ be i.i.d.~random
variables such that $\E[Z_i]=0$
and $\Var[Z_i]<\infty$.
Let $S_i=\sum_{j=1}^i Z_i$.
Then,
\begin{align*}
    \Pr[\max_{1\leq j\leq n} |S_j|\geq M ] \leq \frac{\Var[S_n]}{M^2}.
\end{align*}
\end{lemma}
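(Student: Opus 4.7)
The plan is to prove the Kolmogorov inequality by the standard first-passage decomposition argument. Since $\E[Z_i]=0$, note $\Var[S_n]=\E[S_n^2]$, so it is enough to show $M^2\Pr[\max_{1\le j\le n}|S_j|\ge M]\le \E[S_n^2]$. I would partition the event $A=\{\max_{1\le j\le n}|S_j|\ge M\}$ according to the first index at which the partial sum reaches magnitude $M$: let $A_k=\{|S_k|\ge M,\ |S_j|<M\text{ for }j<k\}$, so the $A_k$ are disjoint and $A=\bigsqcup_{k=1}^n A_k$.

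Next I would bound $\E[S_n^2]$ from below by $\sum_{k=1}^n \E[S_n^2 \mathbf{1}_{A_k}]$. Writing $S_n=S_k+(S_n-S_k)$ and expanding the square gives
\begin{equation*}
\E[S_n^2\mathbf{1}_{A_k}]=\E[S_k^2\mathbf{1}_{A_k}]+2\E[S_k\mathbf{1}_{A_k}(S_n-S_k)]+\E[(S_n-S_k)^2\mathbf{1}_{A_k}].
\end{equation*}
The crucial step is to observe that $A_k$ is measurable with respect to $Z_1,\dots,Z_k$, so $S_k\mathbf{1}_{A_k}$ is independent of $S_n-S_k=Z_{k+1}+\cdots+Z_n$. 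Combined with $\E[Z_i]=0$, the cross term vanishes; dropping the nonnegative last term and using $|S_k|\ge M$ on $A_k$ yields $\E[S_n^2\mathbf{1}_{A_k}]\ge M^2\Pr[A_k]$.

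Summing over $k$ gives $\E[S_n^2]\ge M^2\sum_{k=1}^n\Pr[A_k]=M^2\Pr[A]$, which is the desired inequality. I do not anticipate any serious obstacle here; the only subtlety is justifying the independence that kills the cross term, which follows because $A_k$ is determined by $Z_1,\dots,Z_k$ alone. Since this is a standard textbook result cited from \cite{Dur19}, the write-up can be kept short, with the decomposition and the independence observation being the only substantive points.
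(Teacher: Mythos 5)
Your proof is correct and is the standard textbook argument (first-passage decomposition plus the vanishing cross term via independence), which is exactly the proof given in the cited reference; the paper itself does not supply a proof of this lemma and simply cites Durrett, so there is nothing in the paper to compare against. One cosmetic note: the paper's statement has a typo ($S_i=\sum_{j=1}^i Z_i$ should be $S_i=\sum_{j=1}^i Z_j$), and your proof also shows that the i.i.d.\ assumption can be relaxed to mere independence, which is the form the inequality is usually stated in.
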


\begin{lemma}[The Chernoff inequality (see e.g. Theorem 1.10.5 of \cite{Doerr18})]
\label{lem:Chernoff}
Let $X_1, X_2, \ldots, X_n$ be independent random variables taking values in $[0,1]$.
Let $X=\sum_{i=1}^nX_i$.
Let $\delta \in [0,1]$.
Then
\begin{align*}
\Pr\left[X\leq (1-\delta)\E[X]\right]\leq \exp\left(-\frac{\delta^2\E[X]}{2}\right).
\end{align*}
\end{lemma}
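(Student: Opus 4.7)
The plan is to prove this lower-tail Chernoff bound by the standard exponential-moment (Chernoff–Cram\'er) method, followed by a short analytic simplification to get the clean $\mathrm{e}^{-\delta^2\mu/2}$ form. Write $\mu_i=\E[X_i]$ and $\mu=\E[X]=\sum_i \mu_i$. For any parameter $t>0$, I would apply Markov's inequality to the random variable $\mathrm{e}^{-tX}$:
\begin{align*}
\Pr[X\leq (1-\delta)\mu]
= \Pr\bigl[\mathrm{e}^{-tX}\geq \mathrm{e}^{-t(1-\delta)\mu}\bigr]
\leq \mathrm{e}^{t(1-\delta)\mu}\,\E\bigl[\mathrm{e}^{-tX}\bigr].
\end{align*}
By independence of the $X_i$, the expectation factors as $\E[\mathrm{e}^{-tX}]=\prod_i \E[\mathrm{e}^{-tX_i}]$, so the whole problem reduces to bounding each single-variable moment generating function.

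The key single-variable step uses convexity. Since $x\mapsto \mathrm{e}^{-tx}$ is convex, on the interval $[0,1]$ it lies below the chord: $\mathrm{e}^{-tx}\leq 1-x(1-\mathrm{e}^{-t})$ for $x\in[0,1]$. Taking expectations (valid because $X_i\in[0,1]$) and then using $1+y\leq \mathrm{e}^y$ gives
\begin{align*}
\E\bigl[\mathrm{e}^{-tX_i}\bigr]\leq 1-\mu_i(1-\mathrm{e}^{-t})\leq \exp\bigl(-\mu_i(1-\mathrm{e}^{-t})\bigr).
\end{align*}
Multiplying over $i$, I obtain $\E[\mathrm{e}^{-tX}]\leq \exp(-\mu(1-\mathrm{e}^{-t}))$, and hence
\begin{align*}
\Pr[X\leq (1-\delta)\mu]\leq \exp\bigl(t(1-\delta)\mu-\mu(1-\mathrm{e}^{-t})\bigr).
\end{align*}
The bracket is minimized at $t=-\ln(1-\delta)$ (well-defined and positive for $\delta\in(0,1)$; the boundary cases $\delta=0$ and $\delta=1$ are trivial). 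Plugging this in yields the standard raw Chernoff bound
\begin{align*}
\Pr[X\leq(1-\delta)\mu]\leq \left(\frac{\mathrm{e}^{-\delta}}{(1-\delta)^{1-\delta}}\right)^{\!\mu}.
\end{align*}

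The remaining step, and the only real obstacle, is the analytic cleanup: showing
\begin{align*}
(1-\delta)^{1-\delta}\geq \mathrm{e}^{-\delta+\delta^2/2}\qquad\text{for all }\delta\in[0,1],
\end{align*}
which, after taking logarithms, is equivalent to $(1-\delta)\ln(1-\delta)\geq -\delta+\delta^2/2$. I would prove this by setting $g(\delta)\defeq (1-\delta)\ln(1-\delta)+\delta-\delta^2/2$, noting $g(0)=0$, and differentiating: $g'(\delta)=-\ln(1-\delta)-\delta$, which satisfies $g'(0)=0$ and $g''(\delta)=\frac{1}{1-\delta}-1=\frac{\delta}{1-\delta}\geq 0$ on $[0,1)$; hence $g'\geq 0$ and $g\geq 0$ on $[0,1)$, with the endpoint $\delta=1$ handled by continuity (or directly, since $(1-\delta)\ln(1-\delta)\to 0$). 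Combining this with the raw Chernoff bound gives $\Pr[X\leq(1-\delta)\mu]\leq \exp(-\delta^2\mu/2)$, which is exactly the stated inequality. The truly delicate point is that both the convex-chord inequality and the endpoint behavior of $(1-\delta)\ln(1-\delta)$ need $X_i\in[0,1]$ and $\delta\in[0,1]$ to hold simultaneously; everything else is routine.
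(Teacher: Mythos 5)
Your proof is correct: the exponential-moment argument, the convex-chord bound $\mathrm{e}^{-tx}\leq 1-x(1-\mathrm{e}^{-t})$ on $[0,1]$, the optimization at $t=-\ln(1-\delta)$, and the calculus verification of $(1-\delta)\ln(1-\delta)\geq -\delta+\delta^2/2$ are all sound, and the boundary cases $\delta\in\{0,1\}$ are handled adequately. The paper itself gives no proof of this lemma (it is quoted from Theorem 1.10.5 of the cited reference of Doerr), and your argument is exactly the standard Chernoff--Cram\'er derivation one finds there, so there is nothing to reconcile.
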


\begin{lemma}[See e.g.~Sections 2.4.3 of \cite{AF02}] \label{lem:static_unvisited}
Consider a random walk on a (static) graph $G=(V,E)$.
Then for any $c>0$ and any $v,u\in V$, 
$
    \Pr[\tau_v>c\mathrm{e}\thit|X_0=u]\leq \mathrm{e}^{-c}
$.
\end{lemma}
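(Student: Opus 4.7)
The approach is the classical ``block'' argument. First I would apply Markov's inequality: for any vertex $w \in V$, the definition $\thit = \max_{u,v}\E[\min\{t\geq 0 : X_0=u, X_t=v\}]$ gives $\E[\tau_v \mid X_0 = w] \leq \thit$, and therefore
\begin{equation*}
\Pr[\tau_v > \mathrm{e}\thit \mid X_0 = w] \;\leq\; \frac{\E[\tau_v \mid X_0 = w]}{\mathrm{e}\thit} \;\leq\; \frac{1}{\mathrm{e}}.
\end{equation*}
The crucial feature of this one-block bound is that it holds \emph{uniformly} over the starting vertex $w$, which is what will let the next step go through.

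The second step iterates via the Markov property. I would partition the time interval $[0,\,c\mathrm{e}\thit]$ into $\lfloor c\rfloor$ consecutive blocks of length $\mathrm{e}\thit$, and track the walker's position $X_{j\mathrm{e}\thit}$ at the start of each block. Conditioning on those positions, the events ``$v$ is not hit during block $j$'' factorize by the Markov property, and each conditional probability is at most $1/\mathrm{e}$ by the first step regardless of the position at the start of the block. Multiplying the block-wise bounds and summing over intermediate positions via total probability yields
\begin{equation*}
\Pr[\tau_v > \lfloor c\rfloor \mathrm{e}\thit \mid X_0 = u] \;\leq\; \mathrm{e}^{-\lfloor c\rfloor}.
\end{equation*}
Since the survival function $t\mapsto \Pr[\tau_v>t\mid X_0=u]$ is nonincreasing, this dominates $\Pr[\tau_v>c\mathrm{e}\thit\mid X_0=u]$.

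There is no real obstacle; the argument is short and standard. The only mildly delicate point is the fractional-$c$ case: the block argument as written gives $\mathrm{e}^{-\lfloor c\rfloor}$ rather than the stated $\mathrm{e}^{-c}$, so strictly speaking one loses a multiplicative factor of at most $\mathrm{e}$. The stated form is recovered either by promoting the block inequality to submultiplicativity of $q(t)\defeq \sup_w\Pr[\tau_v>t\mid X_0=w]$ (which then extends to all real $t$ by a standard interpolation), or, for reversible chains, by a spectral tail bound; for the applications in this paper only the version up to a constant factor is actually invoked, so the simplest block argument already suffices.
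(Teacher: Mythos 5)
Your proof is the same block-plus-Markov-inequality argument the paper gives (which it only sketches in one sentence below the lemma statement): split $c\mathrm{e}\thit$ steps into blocks of length $\mathrm{e}\thit$, bound each block's failure probability uniformly in the starting vertex by $1/\mathrm{e}$ via Markov's inequality, and chain by the Markov property. You are, if anything, more careful than the paper, which silently treats $c$ as an integer; your observation that the naive argument yields $\mathrm{e}^{-\lfloor c\rfloor}$, and your two suggested fixes (submultiplicativity of the tail, or just accepting a constant-factor loss), address a point the paper glosses over.
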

To see this, divide $c\mathrm{e}\thit$-steps
random walk into $c$ independent random walks
each of length $\mathrm{e}\thit$.
Then, in each walk, the walker
does not visit a specific vertex
with probability at most $1/\mathrm{e}$ from the Markov inequality.

Using \cref{lem:static_unvisited}, for any $t\geq \mathrm{e}\thit\log n$, it is easy to see that
$
    \E[\mathcal{U}_t]
    =\sum_{v\in V}\Pr[\tau_v>t|X_0=u]
    \leq  n \mathrm{e}^{-\log n}=1.
$
This implies that, for any RWoGG with $\func(i)\geq \mathrm{e}\thit(i)$, the number of unvisited vertices is at most $1$ in expectation at the end of every round.


\begin{lemma}[Theorem 4.1 of \cite{OP19}]
\label{lem:MeetingTimeLemma}
Let $P\in [0,1]^{V\times V}$ be an irreducible, reversible and lazy transition matrix over $V$, and let $\pi\in (0,1]^V$ denote its stationary distribution.
Let $(X_t)_{t=0}^\infty$ denote the Markov chain according to $P$.
Let $\tau_v(P)=\min\{t\geq 0:X_t=v\}$ and let $\thit(P)=\max_{u,v\in V}\E_u[\tau_v(P)]$.
Then for any $t\geq 0$ and any choice of $h_0, h_1, \ldots, h_t$, 
\begin{align*}
    \Pr_{\pi}\left[\forall 0\leq s\leq t: X_s\neq h_s\right]\leq \left(1-\frac{1}{\thit(P)}\right)^t.
\end{align*}
\end{lemma}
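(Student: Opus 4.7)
The plan is to interpret the survival probability as a bilinear form on $\ell_{2}(\pi)$ and to bound the relevant operator norm by exploiting the positive semidefinite (PSD) structure of a lazy reversible chain. For each vertex $h$, let $D_{h}$ denote the $\{0,1\}$-diagonal operator that zeroes the $h$-th coordinate (self-adjoint on $\ell_{2}(\pi)$). Summing over admissible trajectories gives the identity
\[
    \Pr_{\pi}\!\left[\forall\, 0\le s\le t:\, X_{s}\neq h_{s}\right]
    = \langle \mathbf{1},\, D_{h_{0}} P D_{h_{1}} P D_{h_{2}} \cdots P D_{h_{t}}\, \mathbf{1}\rangle_{\pi}.
\]
Calling the middle operator $Y$ and applying Cauchy--Schwarz with $\|\mathbf{1}\|_{2,\pi}=1$, the task reduces to showing $\|Y\|_{\mathrm{op}}\le (1-1/\thit)^{t}$ in the $\ell_{2}(\pi)$ operator norm.

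Next, I would use that lazy plus reversible makes $P$ self-adjoint and PSD on $\ell_{2}(\pi)$, so the square root $P^{1/2}$ exists and is self-adjoint. Splitting every $P=P^{1/2}P^{1/2}$ and regrouping yields
\[
    Y = \bigl(D_{h_{0}} P^{1/2}\bigr)\, T_{1} T_{2} \cdots T_{t-1}\, \bigl(P^{1/2} D_{h_{t}}\bigr),
    \qquad T_{s}\defeq P^{1/2} D_{h_{s}} P^{1/2}.
\]
By submultiplicativity, $\|Y\|_{\mathrm{op}} \le \|D_{h_{0}} P^{1/2}\|_{\mathrm{op}}\cdot\prod_{s=1}^{t-1}\|T_{s}\|_{\mathrm{op}}\cdot \|P^{1/2} D_{h_{t}}\|_{\mathrm{op}}$. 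Each $T_{s}$ is self-adjoint PSD, so $\|T_{s}\|_{\mathrm{op}}=\lambda_{\max}(P^{1/2} D_{h_{s}} P^{1/2})$; the identity that $AB$ and $BA$ share the same nonzero spectrum rewrites this as $\lambda_{\max}(D_{h_{s}} P D_{h_{s}})=\lambda_{\max}(P_{\overline{h_{s}}})$, and the same identity gives $\|D_{h} P^{1/2}\|_{\mathrm{op}}^{2}=\lambda_{\max}(P_{\overline{h}})$ for the two boundary factors.

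Finally, I would invoke the classical spectral bound $\lambda_{\max}(P_{\overline{h}})\le 1-1/\thit$ already used in the proof of \cref{lem:munorm} via \cref{lem:upper_eigen}. This gives each of the $t-1$ middle factors an upper bound $1-1/\thit$ and each of the two boundary factors an upper bound $\sqrt{1-1/\thit}$, multiplying to exactly $(1-1/\thit)^{t}$, as desired.

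The hard part is extracting the tight exponent. A direct $\ell_{2}$ iteration of the killed density $f_{s}\defeq \rho_{s}/\pi$, based on $\|D_{h_{s+1}} P f_{s}\|_{2,\pi}^{2}\le \lambda_{\max}(P_{\overline{h_{s+1}}})\,\langle P f_{s},f_{s}\rangle_{\pi}\le (1-1/\thit)\|f_{s}\|_{2,\pi}^{2}$ followed by a single Cauchy--Schwarz at the end, only yields $(1-1/\thit)^{t/2}$ and therefore loses a square root. The symmetric regrouping via $P^{1/2}$ is what repairs this loss: every middle block $P^{1/2} D_{h_{s}} P^{1/2}$ attains the full eigenvalue bound rather than its square root, and only the two boundary factors pay the square-root price. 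This step relies essentially on $P$ being PSD; without the lazy assumption $P^{1/2}$ is unavailable and the argument reverts to the lossy exponent.
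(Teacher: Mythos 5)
Your proof is correct, and the argument is essentially the standard one behind this bound. Note that the paper itself does not prove \cref{lem:MeetingTimeLemma}; it is imported verbatim as Theorem 4.1 of [OP19], so there is no in-paper proof to compare against. Your reconstruction via the self-adjoint operator $Y = D_{h_0}PD_{h_1}\cdots PD_{h_t}$, the factorization $P = P^{1/2}P^{1/2}$ (available because laziness plus reversibility makes $P$ positive semidefinite on $\ell_2(\pi)$), the symmetric regrouping into $D_{h_0}P^{1/2}$, $T_s = P^{1/2}D_{h_s}P^{1/2}$, $P^{1/2}D_{h_t}$, and the $AB$/$BA$ spectrum identity reducing everything to $\lambda_{\max}(P_{\overline{h_s}}) \leq 1-1/\thit$ (which is exactly \cref{lem:upper_eigen} in the paper) is the argument used by Oliveira and Peres. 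Your accounting is also right: the two boundary factors each contribute $\sqrt{1-1/\thit}$ and the $t-1$ interior factors each contribute $1-1/\thit$, giving $(1-1/\thit)^t$ exactly, and your closing remark correctly diagnoses why a direct iteration of $\|f_{s+1}\|_{2,\pi}^2 \leq (1-1/\thit)\|f_s\|_{2,\pi}^2$ only achieves the exponent $t/2$ and why the $P^{1/2}$ symmetrization is what recovers the full power. The only cosmetic caveat is the degenerate case $t=0$ (no interior or $P^{1/2}$ factors), where the claim reduces to the trivial $\Pr_\pi[X_0\neq h_0]\leq 1$; your regrouping should be read as vacuous there.
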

Taking $h_i=v\in V$ for all $0\leq i\leq t$ in \cref{lem:MeetingTimeLemma}, we immediately obtain the following.
\begin{corollary}
\label{lem:multihit}
Let $P\in [0,1]^{V\times V}$ be an irreducible, reversible and lazy transition matrix over $V$, and let $\pi\in (0,1]^V$ denote its stationary distribution.
Let $(X_t)_{t=0}^\infty$ denote the Markov chain according to $P$.
Let $\tau_v(P)=\min\{t\geq 0:X_t=v\}$ and let $\thit(P)=\max_{u,v\in V}\E_u[\tau_v(P)]$.
Then for any $v\in V$ and $t>0$, 
\begin{align*}
\Pr_{\pi}\left[\tau_v(P)>t\right]\leq \left(1-\frac{1}{\thit(P)}\right)^t
\leq \exp\left(-\frac{t}{\thit(P)}\right).
\end{align*}
\end{corollary}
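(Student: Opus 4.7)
The plan is to directly specialize \cref{lem:MeetingTimeLemma} by choosing a constant target sequence. Concretely, I would set $h_0 = h_1 = \cdots = h_t = v$ in that lemma, whose hypotheses (irreducible, reversible, lazy, stationary initial law $\pi$) are exactly those of the corollary. Under this specialization the event $\{\forall\, 0 \leq s \leq t: X_s \neq h_s\}$ becomes $\{X_s \neq v \text{ for every } 0 \leq s \leq t\}$, and by the definition $\tau_v(P) = \min\{s \geq 0: X_s = v\}$ this is precisely the event $\{\tau_v(P) > t\}$. Thus \cref{lem:MeetingTimeLemma} immediately yields
$$\Pr_\pi\!\left[\tau_v(P) > t\right] \;\leq\; \left(1 - \frac{1}{\thit(P)}\right)^t,$$
which is the first inequality in the statement.

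For the second inequality I would invoke the elementary bound $1 - x \leq \mathrm{e}^{-x}$ (valid for all real $x$) with $x = 1/\thit(P)$. Since $\thit(P) \geq 1$ the base $1 - 1/\thit(P)$ is nonnegative, so raising both sides to the $t$-th power preserves the inequality and gives $(1 - 1/\thit(P))^t \leq \exp(-t/\thit(P))$, which is the remaining claim.

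There is essentially no real obstacle here: \cref{lem:MeetingTimeLemma} does all the probabilistic work, and the corollary amounts to one line of event-rewriting plus one line of the standard $1 + x \leq \mathrm{e}^{x}$ estimate. The only substantive point to double-check is the equivalence $\{\forall\, 0 \leq s \leq t: X_s \neq v\} = \{\tau_v(P) > t\}$, which follows directly from the definition of the first hitting time as the minimum index at which the walk equals $v$; in particular, $\tau_v > t$ also rules out $X_0 = v$, so the corollary is a genuine consequence even for initial states already at $v$ (which have $\pi$-mass and must be excluded from the event on the left-hand side).
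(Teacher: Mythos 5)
Your proposal is correct and matches the paper's approach exactly: the paper also obtains the first inequality by taking $h_i = v$ for all $0 \leq i \leq t$ in \cref{lem:MeetingTimeLemma}, and the second inequality is the standard $1 - x \leq \mathrm{e}^{-x}$ estimate. Your explicit check that the event $\{\forall\, 0 \leq s \leq t : X_s \neq v\}$ coincides with $\{\tau_v(P) > t\}$ is the right (and only) thing to verify.
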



\begin{lemma}[See Section 3.6.5 of \cite{AF02} or Theorem 4.1 of \cite{OP19}]
\label{lem:upper_eigen}
Let $P\in [0,1]^{V\times V}$ be an irreducible and reversible transition matrix over $V$, and let $\pi\in (0,1]^V$ denote its stationary distribution. 
For a subset $S\subseteq V$, define $P_{\overline S}\in [0,1]^{V\times V}$ by $(P_{\overline S})_{u,v}=P_{u,v}$ for any $u,v\in V\setminus S$ and $(P_{\overline S})_{u,v}=0$ for any $u\in S$ or $v\in S$.
Let $\lambda(M)$ denote the largest eigenvalue of a matrix $M$.
Then for any $S\notin \{\emptyset,V\}$,
\begin{align*}
\lambda(P_{\overline S})
&\leq 1-\frac{1}{\thit(P)}.
\end{align*}
Furthermore, for any $S\notin \{\emptyset,V\}$ and any $f\in \mathbbm{R}^V$, 
\begin{align*}
\Inp<f, P_{\overline S} f>_\pi \leq \lambda(P_{\overline S}) \Inp<f,f>_\pi.
\end{align*}
\end{lemma}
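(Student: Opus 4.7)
The plan is to exploit two structural facts: reversibility of $P$ makes $P_{\overline S}$ self-adjoint with respect to $\Inp<\cdot,\cdot>_\pi$, and the entries of powers of $P_{\overline S}$ encode the survival probabilities $\Pr_u[\tau_S > t]$, which are already controlled geometrically in terms of $\thit(P)$ by \cref{lem:MeetingTimeLemma}.

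For the ``Furthermore'' inequality, first note that reversibility of $P$ transfers to $P_{\overline S}$: namely $\pi(u)(P_{\overline S})_{u,v} = \pi(v)(P_{\overline S})_{v,u}$ for all $u,v\in V$, because the zeroing of rows and columns indexed by $S$ is symmetric and therefore preserves detailed balance (trivially when either side is $0$). Equivalently, under the similarity $D^{1/2}P_{\overline S}D^{-1/2}$ with $D=\mathrm{diag}(\pi)$, $P_{\overline S}$ becomes a symmetric matrix, hence admits a real orthonormal eigenbasis with real eigenvalues. The Rayleigh-quotient inequality then gives $\Inp<f, P_{\overline S} f>_\pi \leq \lambda(P_{\overline S}) \Inp<f,f>_\pi$ at once, settling the second assertion.

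For the eigenvalue bound, I would apply Perron--Frobenius to the nonnegative matrix $P_{\overline S}$ to obtain a nonnegative right eigenvector $\phi$, supported on $V\setminus S$, with eigenvalue $\lambda \defeq \lambda(P_{\overline S})$. Normalize $\phi$ so that $\phi(u^*)=\max_u \phi(u)=1$ for some $u^*\in V\setminus S$. Iterating the eigen-equation and using $\phi\le 1$ pointwise,
\begin{align*}
\lambda^t = \bigl((P_{\overline S})^t\phi\bigr)(u^*) \leq \sum_{v\in V\setminus S}\bigl((P_{\overline S})^t\bigr)_{u^*,v} = \Pr_{u^*}[\tau_S>t].
\end{align*}
Fix any $w\in S$. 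Since $\tau_S\leq \tau_w$, we have $\Pr_{u^*}[\tau_S>t]\leq \Pr_{u^*}[\tau_w>t] \leq \Pr_\pi[\tau_w>t]/\pi(u^*)$, and the latter is at most $(1-1/\thit(P))^t/\pi(u^*)$ by \cref{lem:MeetingTimeLemma} with $h_s=w$ for all $s$. Taking $t$-th roots and letting $t\to\infty$ collapses the $\pi(u^*)^{-1/t}$ factor to $1$, yielding $\lambda(P_{\overline S})\leq 1-1/\thit(P)$.

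The main obstacle is handling the Perron--Frobenius step when the restriction of $P_{\overline S}$ to $V\setminus S$ fails to be irreducible (for example, if removing $S$ disconnects the underlying graph). In that situation $\lambda(P_{\overline S})$ equals the maximum of the Perron eigenvalues over the irreducible components of $V\setminus S$, and the argument above applies verbatim on whichever component attains that maximum (with $u^*$ chosen inside it). Alternatively, one can bypass Perron--Frobenius entirely by working with the symmetrized matrix $D^{1/2}P_{\overline S}D^{-1/2}$ and estimating its $\ell_2(\pi)$ operator norm through Cauchy--Schwarz against an indicator-type test function; this route leads to essentially the same bound and sidesteps any reducibility subtleties.
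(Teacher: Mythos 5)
Your proof of the ``Furthermore'' inequality is correct: $P_{\overline S}$ inherits detailed balance from $P$ (the symmetric zeroing does not disturb it), hence is self-adjoint with respect to the $\pi$-weighted inner product, and the Rayleigh-quotient characterization of the top eigenvalue yields the bound at once.

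For the eigenvalue bound, however, there is a genuine gap. The step bounding $\Pr_\pi[\tau_w > t]$ by $(1-1/\thit(P))^t$ via \cref{lem:MeetingTimeLemma} requires $P$ to be \emph{lazy}, but \cref{lem:upper_eigen} as stated assumes only irreducibility and reversibility, so your argument proves a strictly weaker statement. (There is also a circularity worth flagging: \cref{lem:upper_eigen} and \cref{lem:MeetingTimeLemma} are both attributed to the same Theorem~4.1 of the same reference, so deriving one from the other is not obviously independent.) The route the paper intends, and which the standard argument follows, is different and avoids both issues: take the \emph{left} Perron eigenvector $\mu$ of $P_{\overline S}$ on the relevant irreducible block, normalized to a probability distribution on $V\setminus S$. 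Started from $\mu$, after each step the conditional law given ``not yet exited'' is again $\mu$, so the exit time $\tau_S$ is exactly geometric with success parameter $1-\lambda(P_{\overline S})$; hence $\E_\mu[\tau_S] = \frac{1}{1-\lambda(P_{\overline S})}$. Since $\tau_S \leq \tau_w$ for any fixed $w\in S$ and $\E_\mu[\tau_w]\leq \thit(P)$, the bound follows directly, with no laziness hypothesis, no tail estimate, and no $t\to\infty$ limit. Your right-eigenvector/$t$-th-root variant reaches the same conclusion but only under laziness and with the extra machinery of \cref{lem:MeetingTimeLemma}. Finally, the closing remark that one can ``bypass Perron--Frobenius entirely'' via Cauchy--Schwarz against a test function is not substantiated and does not constitute an alternative proof as written.
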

Since $\Nor<P_{\overline S}f>_{2,\pi}^2=\Inp<P_{\overline S} f, P_{\overline S} f>_\pi=\Inp< f, P_{\overline S}^2 f>_\pi$, 
we have the following corollary.
\begin{corollary}
\label{lem:opnorm}
Let $P\in [0,1]^{V\times V}$ be an irreducible, reversible and lazy transition matrix over $V$, and let $\pi\in (0,1]^V$ denote its stationary distribution. 
Suppose that $P_{\overline S}$ is a matrix defined in \cref{lem:upper_eigen}.
Then for any $S\notin \{\emptyset,V\}$ and any $f\in \mathbbm{R}^V$, 
\begin{align*}
\Nor<P_{\overline S}f>_{2,\pi}^2\leq \lambda_1(P_{\overline S})^2\Nor<f>_{2,\pi}^2
\leq \left(1-\frac{1}{\thit(P)}\right)^2 \Nor<f>_{2,\pi}^2
\end{align*}
Here, $\lambda_1(M)$ denotes the largest eigenvalue in absolute value of a matrix $M$.
\end{corollary}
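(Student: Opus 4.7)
The plan is to combine two ingredients: self-adjointness of $P_{\overline{S}}$ with respect to the $\pi$-weighted inner product, which follows from reversibility of $P$, and the eigenvalue bound already established in \cref{lem:upper_eigen}. The first inequality is an operator-norm estimate that drops out of the spectral theorem once self-adjointness is in hand; the second inequality then follows directly from \cref{lem:upper_eigen} after reconciling the spectral radius $\lambda_{1}(P_{\overline{S}})$ with the signed top eigenvalue bounded there.

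First, I would verify that $P_{\overline{S}}$ is self-adjoint with respect to $\langle\cdot,\cdot\rangle_{\pi}$. Reversibility of $P$ gives $\pi(u) P_{u,v} = \pi(v) P_{v,u}$, and since zeroing out the rows and columns indexed by $S$ preserves this symmetry, one obtains $\pi(u)(P_{\overline{S}})_{u,v} = \pi(v)(P_{\overline{S}})_{v,u}$, which in turn yields $\langle P_{\overline{S}} f, g\rangle_{\pi} = \langle f, P_{\overline{S}} g\rangle_{\pi}$ for all $f,g$. This is exactly the step announced in the paragraph preceding the corollary, namely $\|P_{\overline{S}} f\|_{2,\pi}^{2} = \langle P_{\overline{S}} f, P_{\overline{S}} f\rangle_{\pi} = \langle f, P_{\overline{S}}^{2} f\rangle_{\pi}$.

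Next, applying the spectral theorem to the self-adjoint operator $P_{\overline{S}}$ on the finite-dimensional inner-product space $(\mathbbm{R}^{V}, \langle\cdot,\cdot\rangle_{\pi})$, one obtains an orthonormal eigenbasis $\{u_{i}\}$ with real eigenvalues $\mu_{i}$. Expanding $f = \sum_{i} c_{i} u_{i}$ and using Parseval, $\|P_{\overline{S}} f\|_{2,\pi}^{2} = \sum_{i} c_{i}^{2} \mu_{i}^{2} \leq \lambda_{1}(P_{\overline{S}})^{2} \sum_{i} c_{i}^{2} = \lambda_{1}(P_{\overline{S}})^{2} \|f\|_{2,\pi}^{2}$, delivering the first inequality.

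For the second inequality, \cref{lem:upper_eigen} gives $\lambda(P_{\overline{S}}) \leq 1 - 1/\thit(P)$ for the signed top eigenvalue, and it remains only to check that $\lambda_{1}(P_{\overline{S}}) = \lambda(P_{\overline{S}})$, i.e.\ that no eigenvalue of $P_{\overline{S}}$ is negative. This is where the laziness hypothesis enters: a lazy reversible $P$ is positive semidefinite in the $\pi$-inner product (its eigenvalues lie in $[0,1]$), and $P_{\overline{S}}$ decomposes as a principal submatrix on the invariant subspace of functions vanishing on $S$ (after the $\sqrt{\pi}$-rescaling that diagonalizes the inner product) together with the zero operator on its complement, so all eigenvalues of $P_{\overline{S}}$ sit in $[0,1]$. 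I do not expect a genuine obstacle here: the corollary is essentially a repackaging of \cref{lem:upper_eigen} in operator-norm form, and the only subtlety—the one that justifies adding laziness to the hypothesis—is precisely the final $\lambda_{1} = \lambda$ identification.
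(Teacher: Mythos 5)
Your proof is correct and takes essentially the same route as the paper: reversibility gives self-adjointness of $P_{\overline S}$ in the $\pi$-inner product (so $\Nor<P_{\overline S}f>_{2,\pi}^2=\Inp<f,P_{\overline S}^2f>_\pi$), the spectral theorem gives the operator-norm bound $\lambda_1(P_{\overline S})^2$, and laziness forces the eigenvalues of $P_{\overline S}$ into $[0,1]$ so that $\lambda_1(P_{\overline S})=\lambda(P_{\overline S})$ and \cref{lem:upper_eigen} applies. The paper's own "proof" is just the one-line identity preceding the corollary, and you have correctly filled in the spectral-theorem and positivity steps it leaves implicit.
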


\begin{lemma}[See e.g.~(12.8) of \cite{LP17}]
\label{lem:Markov_variance}
Let $P\in [0,1]^{V\times V}$ be a reversible transition matrix with respect to $\pi\in (0,1]^V$.
Then for any probability vector $f\in [0,1]^V$,
$\Nor<\frac{f}{\pi}-\mathbbm{1}>_{2,\pi}^2=\Nor<\frac{f}{\pi}>_{2,\pi}^2-1$ 
and
\begin{align*}
\Nor<P\frac{f}{\pi}-\mathbbm{1}>_{2,\pi}^2\leq \lambda_2(P)^2\Nor<\frac{f}{\pi}-1>_{2,\pi}^2
\end{align*}
holds where $\lambda_2(P)$ is the second largest eigenvalue (in absolute value) of $P$.
\end{lemma}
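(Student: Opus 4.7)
The plan is to prove both statements by working in the Hilbert space $\mathbb{R}^V$ equipped with the inner product $\langle g, h\rangle_\pi = \sum_{v\in V} \pi(v) g(v) h(v)$, under which $P$ is self-adjoint.

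First I would verify the identity $\|f/\pi - \mathbbm{1}\|_{2,\pi}^2 = \|f/\pi\|_{2,\pi}^2 - 1$ by a direct expansion:
\begin{align*}
\sum_{v\in V} \pi(v)\left(\frac{f(v)}{\pi(v)} - 1\right)^2
= \sum_{v\in V} \pi(v)\frac{f(v)^2}{\pi(v)^2} - 2\sum_{v\in V} f(v) + \sum_{v\in V}\pi(v),
\end{align*}
and the last two sums equal $1$ since $f$ is a probability vector and $\pi$ is a probability distribution.

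Next I would establish the self-adjointness of $P$ with respect to $\langle\cdot,\cdot\rangle_\pi$: for any $g, h$,
\begin{align*}
\langle Pg, h\rangle_\pi = \sum_{u,v} \pi(u) P_{u,v} g(v) h(u) = \sum_{u,v} \pi(v) P_{v,u} g(v) h(u) = \langle g, Ph\rangle_\pi,
\end{align*}
where the middle equality uses reversibility $\pi(u)P_{u,v} = \pi(v)P_{v,u}$. By the spectral theorem, there is an orthonormal basis $\varphi_1,\ldots,\varphi_{|V|}$ of $\mathbb{R}^V$ (under $\langle\cdot,\cdot\rangle_\pi$) consisting of real eigenvectors of $P$, with real eigenvalues $1 = \lambda_1 \geq |\lambda_2(P)| \geq |\lambda_3| \geq \cdots$; we may take $\varphi_1 = \mathbbm{1}$, since $P\mathbbm{1} = \mathbbm{1}$.

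Now I would observe that $f/\pi - \mathbbm{1}$ is orthogonal to $\mathbbm{1}$ under $\langle\cdot,\cdot\rangle_\pi$:
\begin{align*}
\langle f/\pi - \mathbbm{1}, \mathbbm{1}\rangle_\pi = \sum_{v\in V}\pi(v)\left(\frac{f(v)}{\pi(v)} - 1\right) = \sum_{v\in V} f(v) - \sum_{v\in V}\pi(v) = 0.
\end{align*}
Hence $f/\pi - \mathbbm{1} = \sum_{i\geq 2} c_i \varphi_i$ for some coefficients $c_i$, and using $P\mathbbm{1} = \mathbbm{1}$ we have $P(f/\pi) - \mathbbm{1} = P(f/\pi - \mathbbm{1}) = \sum_{i\geq 2} c_i \lambda_i \varphi_i$. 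By Parseval's identity,
\begin{align*}
\Nor<P(f/\pi) - \mathbbm{1}>_{2,\pi}^2 = \sum_{i\geq 2} c_i^2 \lambda_i^2 \leq \lambda_2(P)^2 \sum_{i\geq 2} c_i^2 = \lambda_2(P)^2 \Nor<f/\pi - \mathbbm{1}>_{2,\pi}^2,
\end{align*}
which is the desired inequality. This proof is essentially routine spectral theory; the only mild subtlety is correctly identifying the orthogonal complement of $\mathbbm{1}$ as the invariant subspace on which $P$ acts as a contraction of factor $|\lambda_2(P)|$, so there is no genuine obstacle.
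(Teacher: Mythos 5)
Your proof is correct. The paper does not prove this lemma itself---it simply cites (12.8) of \cite{LP17}---and your argument (self-adjointness of $P$ in the $\pi$-inner product, the orthonormal eigenbasis with $\varphi_1=\mathbbm{1}$, orthogonality of $\frac{f}{\pi}-\mathbbm{1}$ to $\mathbbm{1}$, and Parseval) is exactly the standard spectral derivation behind that reference, so there is nothing to add.
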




\begin{lemma}[Lemmas 4.24 and 4.25 of \cite{AF02}]
\label{lem:thit_upper_lower}
Let $P$ be reversible transition matrix and let $\pi$ be its stationary distribution. Then
\begin{align*}
    \frac{1}{1-\lambda_2(P)}\leq \thit(P) \leq \frac{2}{\pi_{\min}(1-\lambda_2(P))}.
\end{align*}
\end{lemma}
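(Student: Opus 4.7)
The plan is to pass through the spectral decomposition of $P$. Since $P$ is reversible, it is self-adjoint on $L^{2}(\pi)$ and admits an orthonormal basis of real eigenfunctions $\psi_1\equiv 1,\psi_2,\ldots,\psi_n$ (orthonormal under $\langle f,g\rangle_\pi=\sum_x\pi(x)f(x)g(x)$), with real eigenvalues $1=\lambda_1>\lambda_2\geq\cdots\geq\lambda_n\geq -1$. Two ingredients I would use throughout: the \emph{completeness identity} $\sum_i \psi_i(u)^2=1/\pi(u)$, obtained by expanding the evaluation functional $\delta_u/\pi(u)$ in the basis, and the \emph{spectral formula for the fundamental matrix} $Z=\sum_{t\geq 0}(P^t-\Pi)$, namely $Z(x,y)=\pi(y)\sum_{i\geq 2}\psi_i(x)\psi_i(y)/(1-\lambda_i)$. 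Combined with the standard identity $\E_x[\tau_y]=(Z(y,y)-Z(x,y))/\pi(y)$, this produces the master formula
\begin{align*}
\E_x[\tau_y]=\sum_{i\geq 2}\frac{\psi_i(y)^2-\psi_i(x)\psi_i(y)}{1-\lambda_i}.
\end{align*}

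For the lower bound, I would average the master formula over $x\sim\pi$: orthogonality of $\psi_i$ ($i\geq 2$) to $\psi_1\equiv 1$ kills the cross term and yields $\E_\pi[\tau_y]=\sum_{i\geq 2}\psi_i(y)^2/(1-\lambda_i)\geq \psi_2(y)^2/(1-\lambda_2)$. Since $\psi_2$ is $L^{2}(\pi)$-normalized, $\sum_y \pi(y)\psi_2(y)^2=1$ forces some vertex $y^{*}$ to satisfy $\psi_2(y^{*})^2\geq 1$. Combined with $\thit\geq\max_x \E_x[\tau_{y^{*}}]\geq \E_\pi[\tau_{y^{*}}]$, this yields $\thit\geq 1/(1-\lambda_2)$.

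For the upper bound, I would apply the triangle inequality to the master formula, use $1/(1-\lambda_i)\leq 1/(1-\lambda_2)$ for all $i\geq 2$, and then Cauchy-Schwarz:
\begin{align*}
\E_x[\tau_y]\leq \frac{1}{1-\lambda_2}\sum_{i\geq 2}|\psi_i(y)|\bigl(|\psi_i(y)|+|\psi_i(x)|\bigr)\leq \frac{1}{1-\lambda_2}\left(\frac{1}{\pi(y)}+\frac{1}{\sqrt{\pi(x)\pi(y)}}\right),
\end{align*}
where the last step uses the completeness identity twice (once directly, once through Cauchy-Schwarz on $\sum_i|\psi_i(x)\psi_i(y)|$). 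Both terms in the parentheses are at most $1/\pi_{\min}$, so $\E_x[\tau_y]\leq 2/(\pi_{\min}(1-\lambda_2))$, and maximizing over $x,y$ gives the claim.

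The main obstacle is technical rather than conceptual: the series defining $Z$ may fail to converge when $P$ has a $-1$ eigenvalue (periodic case). The cleanest fix is to note that the paper applies the lemma only to lazy chains (where all eigenvalues lie in $[0,1]$), so the issue does not arise; more generally one can pass to the lazy version $(I+P)/2$, under which $1-\lambda_2$ is merely halved, or use Abel summation to define $Z$. Once the fundamental-matrix identity is established, the remaining estimates are routine spectral algebra.
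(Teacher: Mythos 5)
The paper does not prove this lemma; it cites it directly to Lemmas 4.24 and 4.25 of Aldous and Fill \cite{AF02}, so there is no internal proof to compare against. Your argument is correct, and it is precisely the spectral route taken in the cited source: diagonalize $P$ in $L^2(\pi)$, express hitting times through the fundamental matrix $Z$ via $\E_x[\tau_y]=(Z(y,y)-Z(x,y))/\pi(y)$, average over $x\sim\pi$ and pick $y^*$ with $\psi_2(y^*)^2\geq 1$ for the lower bound, then use the completeness identity $\sum_i\psi_i(u)^2=1/\pi(u)$ together with Cauchy--Schwarz for the upper bound. Each step checks out, including the bound $\E_\pi[\tau_y]\leq\max_x\E_x[\tau_y]\leq\thit$.

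Your closing caveat about laziness is also apt, and in fact slightly more necessary than you say. Beyond the convergence of $\sum_t(P^t-\Pi)$, there is a definitional point: the paper declares $\lambda_j(M)$ to be the $j$-th largest eigenvalue \emph{in absolute value}, so for a non-lazy reversible chain with a near-$(-1)$ eigenvalue, $\lambda_2(P)$ would be that negative eigenvalue, making $1-\lambda_2(P)\approx 2$ and rendering the stated upper bound false (consider the deterministic walk on an even cycle, where $\thit=\Theta(n^2)$ but $1/\pi_{\min}=n$). Your proof correctly works with the second-largest \emph{real} eigenvalue, which coincides with the paper's $\lambda_2$ exactly on lazy chains --- the only setting in which the paper invokes \cref{lem:thit_upper_lower}. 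So the restriction to laziness is not merely cosmetic, and it is good that you flagged it.
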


\if0
\color{blue}
The proofs of \cref{lem:upper_eigen,lem:multihit} are in the literature. We put proofs of them here for just in case.
\begin{proof}[Proof of \cref{lem:upper_eigen}]
First we assume that $P_S$ is irreducible.
From the Perron-Frobenius theorem, a positive vector $\mu\in [0,1]^S$ exists such that $\sum_{x\in S}\mu(x)=1$ and 
$\sum_{x\in S}\mu(x)(P_S)_{xy}=\lambda^{(P_S)} \mu(y)$ holds for any $y\in S$.
Hence for $\mu'\in [0,1]^V$ defined by $\mu'(x)=\mu(x)$ for any $x\in S$ and $\mu(x)=0$ for any $x\in S^C$, 
\begin{align*}
\sum_{x\in S}\mu'(x)P_{xy}=\lambda^{(P_S)} \mu'(y) \hspace{1em} \forall y\in S \hspace{1em}\textrm{and} \hspace{1em}
\sum_{x\in S}\mu'(x)\sum_{y\in S}P_{xy}=\lambda^{(P_S)}.
\end{align*}
Thus $\Pr_{\mu'}[T_{S^C}=t]$ is the geometric distribution with success probability $1-\lambda^{(P_S)}$ and 
\begin{align*}
\E_{\mu'}[T_{S^C}]&=\frac{1}{1-\lambda^{(P_S)}}\leq \thit^{(P)}.
\end{align*}
Thus we obtain the claim for any irreducible $P_S$.
Suppose that $P_S$ is reducible.
Then an irreducible sub-matrix of $P_S$, denoted by $Q$, exists such that $\lambda^{(P_S)}=\lambda^{(Q)}$.
Since $Q$ is also a sub-matrix of $P$, $\lambda^{(Q)}\leq 1-1/\thit^{(P)}$ and we obtain the claim.
\end{proof}
\begin{proof}[Proof of \cref{lem:multihit}]
Let $V_{\setminus v}=V\setminus \{v\}$.
Define $P_{\setminus v}\in [0,1]^{V_{\setminus v}\times V_{\setminus v}}$ by $(P_{\setminus v})_{xy}=P_{xy}$ for any $x,y\in V_{\setminus v}$.
Note that $P_{\setminus v}$ is also reversible with respect to $\pi$, i.e., $\pi(x)(P_{\setminus v})_{xy}=\pi(y)(P_{\setminus v})_{yx}$ holds for any $x,y\in V_{\setminus v}$.
Furthermore, eigenvalues of $P_{\setminus v}$ are non negative since $P$ is lazy.
Using \cref{lem:Rayleigh}, we have
\begin{align*}
\sum_{x\in V_{\setminus v}}\sum_{y\in V_{\setminus v}}\pi(x)(P_{\setminus v}^t)_{x,y}
\leq \lambda(P_{\setminus v})^t
\end{align*}
where $\lambda(P_{\setminus v})$ is the largest eigenvalue of $P_{\setminus v}$.
We are done by applying \cref{lem:upper_eigen}.
\end{proof}
\color{black}
\fi

\end{document}